\setlist[enumerate]{leftmargin=1.8em}
\setlist[itemize]{leftmargin=1.8em}
\definecolor{green}{rgb}{0,0.8,0} % Redefines the color green.
\newtheorem{theorem}{Theorem}[section]
\newtheorem{corollary}[theorem]{Corollary}
\newtheorem{lemma}[theorem]{Lemma}
\newtheorem{proposition}[theorem]{Proposition}
\theoremstyle{definition}
\theoremstyle{remark}
\newtheorem{remark}[theorem]{Remark}
\numberwithin{equation}{section}
\newcommand{\nrm}[1]{\Vert#1\Vert}
\newcommand{\tld}[1]{\widetilde{#1}}
\newcommand{\nnrm}[1]{{\vert\kern-0.25ex\vert\kern-0.25ex\vert #1 
		\vert\kern-0.25ex\vert\kern-0.25ex\vert}}
\newcommand{\dist}{\mathrm{dist}\,}
\newcommand{\supp}{{\mathrm{supp}}\,}
\newcommand{\lap}{\Delta}
\newcommand{\rd}{\partial}
\newcommand{\nb}{\nabla}
\newcommand{\dlt}{\delta}
\newcommand{\omg}{\omega}
\newcommand{\varep}{\varepsilon}
\newcommand{\bfe}{{\bf e}}
\newcommand{\bbR}{\mathbb R}
\newcommand{\calA}{\mathcal A}
\newcommand{\calF}{\mathcal F}
\newcommand{\calO}{\mathcal O}
\newcommand{\calX}{\mathcal X}
\newcommand{\ackn}[1]{
	\addtocontents{toc}{\protect\setcounter{tocdepth}{1}}
	\subsection*{Acknowledgements} {#1}
	\addtocontents{toc}{\protect\setcounter{tocdepth}{1}} }
\definecolor{purple}{rgb}{0.65, 0, 1}
\definecolor{orange}{rgb}{1,.5,0}
\begin{document}

	\title{Stability of vortex quadrupoles with odd-odd symmetry}
	
	\author{Kyudong Choi}
	\address{Department of Mathematical Sciences, Ulsan National Institute of Science and Technology, 50 UNIST-gil, Eonyang-eup, Ulju-gun, Ulsan 44919, Republic of Korea.}
	\email{kchoi@unist.ac.kr}
	
	\author{In-Jee Jeong}
	\address{Department of Mathematical Sciences and RIM, Seoul National University, 1 Gwanak-ro, Gwanak-gu, Seoul 08826, and School of Mathematics, Korea Institute for Advanced Study, Republic of Korea.}
	\email{injee$ \_ $j@snu.ac.kr}
	
	\author{Yao Yao}
	\address{Department of Mathematics, National University of Singapore, Block S17, 10 Lower Kent Ridge Road, Singapore, 119076, Singapore.}
	\email{yaoyao@nus.edu.sg}
	
	\date{\today}
	
	\maketitle
	
	\renewcommand{\thefootnote}{\fnsymbol{footnote}}
	\footnotetext{\emph{2020 AMS Mathematics Subject Classification:} 35Q35}
	\footnotetext{\emph{Key words: vortex stability, Lamb dipole, variational principle, desingularization problem, point vortex} }
	\renewcommand{\thefootnote}{\arabic{footnote}}

	\begin{abstract} For the 2D incompressible Euler equations, we establish global-in-time ($t \in \mathbb{R}$) stability of vortex quadrupoles satisfying odd symmetry with respect to both axes. Specifically, if the vorticity restricted to a quadrant is signed, sufficiently concentrated and close to its radial rearrangement up to a translation in $L^1$, we prove that it remains so for all times. 
    The main difficulty is that the kinetic energy maximization problem in a quadrant -- the typical approach for establishing vortex stability -- lacks a solution, as the kinetic energy continues to increase when the vorticity escapes to infinity. We overcome this by taking dynamical information into account: finite-time desingularization result is combined with monotonicity of the first moment and a careful analysis of the interaction energies between vortices. The latter is achieved by new pointwise estimates on the Biot--Savart kernel and quantitative stability results for general interaction kernels. Moreover, with a similar strategy we obtain stability of a pair of opposite-signed Lamb dipoles moving away from each other.
	\end{abstract}

	\date{\today}

	\section{Introduction}
	
	The spontaneous creation of large-scale vortex structures is one of the most fascinating features of two-dimensional incompressible fluid motion, clearly demonstrating the tendency of vorticity to concentrate. These structures often appear as nearly radial vortices or symmetric dipoles (a pair of counter-rotating vortices), which suggests a strong form of asymptotic stability. While it is highly challenging to establish the formation of such coherent vortex structures from generic initial data, a more feasible goal is to understand vortex dynamics near these configurations and establish their neutral (Lyapunov) stability.
	
	A typical mathematical framework for construction and  stability for vortex structures, including radial vortex and dipoles  (See Figure~\ref{fig_dipole}(a) for an illustration of the Lamb dipole), is to consider the kinetic energy maximization problem under appropriate constraints, for the planar incompressible Euler equations. In the case of dipoles, one may consider the Euler equations on the upper half-plane $\bbR^{2}_{+}$ and impose a constraint on the impulse (vertical first moment).  
	
	Existing results on dipole stability, despite overcoming significant technical challenges, are often quite restrictive: not only must the vorticity be a small perturbation in $L^{p}$
	type norms of a certain single dipole, but it must also share the same sign as the dipole throughout  $\bbR^{2}_{+}$. These assumptions are crucial within the variational framework, as otherwise the structure of the energy-maximizing set becomes genuinely different. Notably, both assumptions fail in the case of two dipoles with opposite signs, even when they are positioned far apart; see Figure~\ref{fig_dipole}(b) for an illustration.

 	\begin{figure}[htbp]
		\begin{center}
		\hspace*{-1cm}\includegraphics[scale=1]{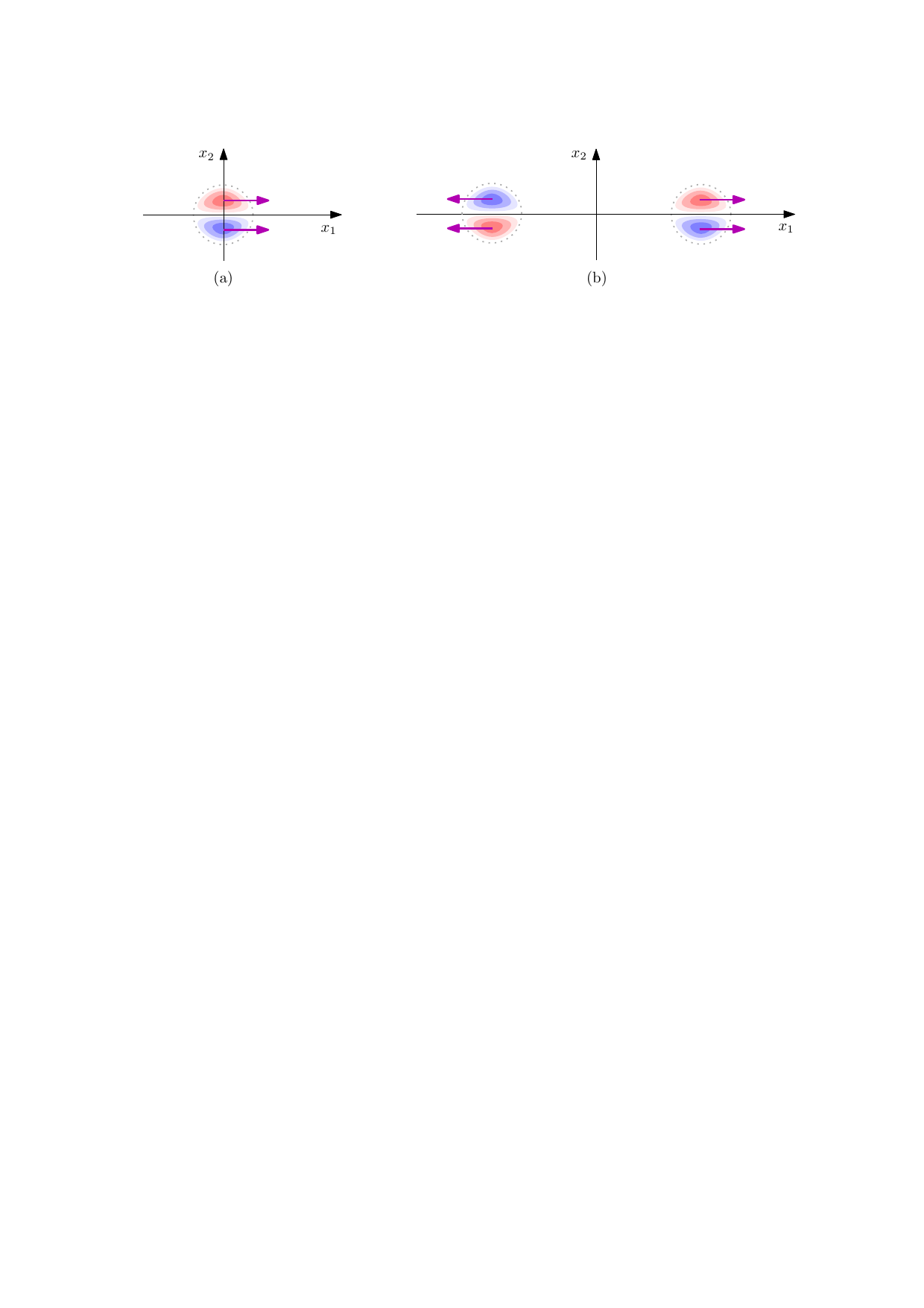}
				\caption{\label{fig_dipole}
		(a) Illustration of a   Lamb dipole, which is a traveling wave solution of the 2D Euler equation.  (b) For a pair of opposite-signed Lamb dipoles far away from each other, there has been no stability result in the literature.}
		\end{center}
		\end{figure}

	This paper takes a step beyond the traditional approach by establishing the dynamical stability of a pair of dipoles moving away from each other, and more generally, for vortex quadrupoles under odd-odd symmetry (odd symmetry with respect to both axes). In such configurations, the problem reduces to considering a single vortex in the first quadrant $Q := (\bbR_{+})^{2}$. However, this domain lacks translation symmetry, and kinetic energy maximizers do not exist, under constraints which are left invariant in time. Our new strategy is to incorporate dynamical information and derive quantitative estimates for possible energy variations of \textit{individual vortices} along the Euler flow under odd-odd symmetry.
		
	\subsection{Main results}
	Let us first recall the two-dimensional incompressible Euler equations in vorticity form, defined on the whole plane $\bbR^{2}$:  \begin{equation}\label{eq:Euler}
		\left\{
		\begin{aligned}
			\rd_t\omg + u\cdot\nb\omg = 0, & \\
			u = \nb^\perp\lap^{-1}\omg &
		\end{aligned}
		\right.
	\end{equation} where $\omg: \bbR\times\bbR^{2}\to\bbR$ and $u: \bbR\times\bbR^{2}\to\bbR^{2}$ represent the vorticity and the velocity, respectively. We shall   need that the first moment is well-defined, and for convenience we introduce the class 
	\begin{equation*}
		\begin{split}
			\calX := \left\{ \omg \in (L^{1} \cap L^{\infty})(\bbR^{2}) \, : \,  
			\int_{\bbR^{2}}|x||\omg(x)| dx < \infty \right\}.
		\end{split}
	\end{equation*} Given $\omg_{0} \in \calX$, the corresponding unique, global-in-time Yudovich solution $\omg(\cdot,t)$ belongs to $\calX$ again for all $t \in \bbR$. Furthermore, we shall consider vorticities with {odd-odd} symmetry \begin{equation*}
		\begin{split}
			\omg(x) = -\omg(\bar{x}) = -\omg(\tld{x}) = \omg(-x) ,\qquad x\in\bbR^{2} 
		\end{split}
	\end{equation*} (where $\bar{x} := (x_1,-x_2)$, $\tld{x} := (-x_1,x_2)$) and the sign condition $\omg(x)\ge0$ in 
	the first quadrant $$Q := \{x\in\mathbb{R}^2\,:\, x_1>0, x_2>0\}.$$

	\medskip 
	
	Our first main result gives forward-in-time orbital stability of a pair of Lamb (or Chaplygin--Lamb) dipoles moving away from each other. We take the Lamb dipole $\omg_{L}$ illustrated in Figure~\ref{fig_dipole}(a), which is an explicit traveling wave solution of \eqref{eq:Euler} satisfying
	\begin{enumerate}
		\item
		$\omg_L(x)>0$ if and only if   $|x|<1$  and $x_2>0$;
		\item odd symmetry with respect to $x_2$-variable, and
		
		\item  unit traveling speed 
		in the sense that 
		$\omega_L(x-t e_1)$ is a solution to \eqref{eq:Euler}.

	\end{enumerate}		
	This special dipole 
	has Lipschitz continuity in $\mathbb{R}^2$ and
	can be obtained as a maximizer of kinetic energy (For more details, we refer to Subsection \ref{intro_lamb}). 
	If we put two opposite-signed Lamb dipoles in the following way:
	$$
	\tilde{\omg}(x_1,x_2)=
	\omg_{L}(x_1-d,x_2)-\omg_{L}(-x_1-d,x_2)\quad \mbox{for}\quad  d\geq 1,
	$$ then 
	$\tilde{\omg}$ becomes a quadrupole under odd-odd symmetry with the sign condition in $Q$, as shown in Figure~\ref{fig_dipole}(b). 
	We obtain the following stability theorem for such a quadrupole  for $d\gg 1$ under odd-odd perturbations: 
	\begin{theorem}\label{thm:main0}
		 
		For each  $\varep>0$, 
		there exist $\dlt=\dlt(\varep)>0$ and $d_{0}=d_{0}(\varep)>0$ such that for all $d\ge d_{0}$, if  
		we consider any odd-odd symmetric 
		initial data $\omg_0\in \mathcal{X} $ with $\omg_0\geq0$ in $Q$ satisfying
		\begin{equation}\label{cond_lamb_prime} 
			\nrm{\omg_0 - \omg_{L}(\cdot_{x}-de_1)}_{L^1\cap L^2(Q)}
			+\nrm{x_2\left(\omg_0 - \omg_{L}(\cdot_{x}-d e_1)\right)}_{L^1(Q)} 
			\leq \dlt,
		\end{equation} 
		then 
		there exists a   function $\tau(\cdot_t):[0,\infty)\to[1,\infty)$ such that 
		the corresponding Euler solution $\omg(\cdot_t)$ 
		satisfies, for any $t\geq0$, \begin{equation*}
			\begin{split}
				\nrm{\omg(t) - \omg_{L}(\cdot_{x}-\tau(t)e_1)}_{L^1\cap L^2(Q)}
				+\nrm{x_2\left(\omg(t) - \omg_{L}(\cdot_{x}-\tau(t)e_1)\right)}_{L^1(Q)} 
				\leq \varep. 
			\end{split}
		\end{equation*} 
  Here, the norm $\|\cdot\|_{L^1\cap L^2}$ is defined by 
  $\|\cdot\|_{L^1}+\|\cdot\|_{L^2}$.
\end{theorem}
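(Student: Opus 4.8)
The plan is to run a global-in-time bootstrap that controls the \emph{individual} right dipole $\omg_R := \omg\,\mathbf{1}_{\{x_1>0\}}$. The key structural fact is that odd-odd symmetry makes both coordinate axes streamlines, so the flow preserves the half-plane $\{x_1>0\}$ and the quadrant $Q$; in particular no vorticity crosses the axes and the distribution function (rearrangement class) of $\omg_R$ is conserved for all $t$. Writing $\omg=\omg_R+\omg_L$ with $\omg_L(x)=-\omg_R(\tld x)$, the conserved total kinetic energy splits as
\[ E[\omg] = 2E[\omg_R] + I_{RL}, \qquad I_{RL} := \iint G(x,y)\,\omg_R(x)\,\omg_L(y)\,dx\,dy, \]
where $E[\cdot]$ is the free-plane self-energy and $I_{RL}$ is the interaction of the two far-apart, opposite-signed dipoles. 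Since in its (conserved) rearrangement class the Lamb dipole maximizes $E[\omg_R]$ at fixed impulse $P[\omg_R]:=\int x_2\,\omg_R\,dx$, the natural Lyapunov quantity is the deficit
\[ \mathcal D(t) := W\big(P[\omg_R(t)]\big) - E[\omg_R(t)] \ge 0, \]
with $W(P)$ the fixed maximal self-energy at impulse $P$. A quantitative form of this variational principle -- the ``quantitative stability for general interaction kernels'' -- then converts smallness of $\mathcal D(t)$ into smallness of $\inf_\tau\nrm{\omg_R(t)-\omg_L(\cdot_x-\tau e_1)}$ in the norms of the theorem, producing $\tau(t)$ and reducing everything to the bound $\mathcal D(t)\aleq\dlt+d^{-1}$ for all $t\ge0$.

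The two obstructions are that neither $E[\omg_R]$ nor $P[\omg_R]$ is separately conserved; both leak into the inter-dipole coupling, and both leaks are governed by the separation $D(t)\aeq 2\tau(t)$. Using the new pointwise Biot--Savart kernel estimates -- and the fact that each cluster carries zero net circulation -- one bounds $|I_{RL}(t)|\aleq D(t)^{-2}$ and $|\tfrac{d}{dt}P[\omg_R(t)]|\aleq D(t)^{-2}$. Energy conservation gives $E[\omg_R(t)]=E[\omg_R(0)]+\tfrac12\big(I_{RL}(0)-I_{RL}(t)\big)$, so the self-energy of the individual dipole stays within $O(D^{-2})$ of its initial near-maximal value, while $|P[\omg_R(t)]-P[\omg_R(0)]|\le\int_0^t|\tfrac{d}{ds}P|\,ds\aleq\int_0^t D(s)^{-2}\,ds$. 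Since the hypothesis \eqref{cond_lamb_prime} forces $\mathcal D(0)\aleq\dlt$ (by continuity of $W$ and of the energy under the $L^1\cap L^2$ and weighted-$L^1$ perturbation), it remains only to make the time integral of the impulse drift small.

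This is the crux. A mere lower bound $D(t)\ge D(0)\ageq d$ is \emph{not} enough: it yields $\int_0^t D^{-2}\aleq t/d^2$, which is unbounded in $t$. One genuinely needs the \emph{linear growth} $D(t)\ageq d+ct$, for then $\int_0^\infty D(s)^{-2}\,ds\aleq d^{-1}$ and hence $\mathcal D(t)\aleq\dlt+d^{-1}$ for all $t$. This is where dynamical information enters: as long as $\omg_R(s)$ is $\varep$-close to a Lamb dipole it travels outward at speed $\aeq1$, advancing its horizontal center at a definite rate, while the monotonicity of the first moment $\int_Q x_1\,\omg\,dx$ -- which relies on the sign condition in $Q$ and the sign of the image-induced velocity -- guarantees the separation never decreases. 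Together these upgrade the lower bound to $D(s)\ageq d+cs$, closing the loop: the quantitative variational principle re-establishes $\varep$-closeness from $\mathcal D(t)\aleq\dlt+d^{-1}$, and the translation $\tau(t)$ is read off as the monotone horizontal center, mapping $[0,\infty)\to[1,\infty)$. The finite-time desingularization result supplies the local-in-time concentration needed to initiate and propagate the bootstrap on each order-one window, after which the monotone first moment and the energy/impulse estimates extend the control to all $t\ge0$.

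I expect the main obstacle to be the self-consistency of this scheme: closeness to the Lamb profile is exactly what produces the linear separation growth that, through the interaction estimates, is in turn needed to keep $\omg_R$ close to the Lamb profile. Turning this circular dependence into a rigorous \emph{global} bootstrap -- rather than one valid only up to a finite time -- is the heart of the matter, and it hinges on (i) the \emph{global} monotonicity of the first moment, and (ii) pointwise kernel estimates sharp enough to secure the $D^{-2}$ decay that renders the impulse drift time-integrable. The absence of a quadrant energy maximizer is sidestepped because we never maximize in the quadrant: we only compare the individual dipole's self-energy to the \emph{free-plane} Lamb maximum, transferring the non-compactness (escape to infinity) into the benign, quantitatively small interaction energy $I_{RL}$.
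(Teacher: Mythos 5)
Your proposal starts from the same two structural ingredients as the paper (the decomposition of the conserved energy into dipole self-energy plus a signed interaction term, and a quantitative ``energy deficit controls distance to the Lamb dipole'' estimate), but the engine you propose for globalizing it --- a bootstrap resting on linear separation growth --- is both unproven and, as far as I can see, unprovable in the generality the theorem requires; this is a genuine gap, and you flag it yourself as ``the heart of the matter'' without closing it. Concretely, you need (i) $|I_{RL}(t)|\lesssim D(t)^{-2}$ for \emph{all} $t\geq 0$, (ii) $|\tfrac{d}{dt}P[\omg_R(t)]|\lesssim D(t)^{-2}$ with $\int_0^\infty D(s)^{-2}ds\lesssim d^{-1}$, and hence (iii) $D(t)\gtrsim d+ct$, which you derive from ``$L^1\cap L^2$-closeness to Lamb implies traveling speed $\simeq 1$.'' Step (iii) cannot be carried out with constants depending only on $\varep$: the theorem requires $\dlt=\dlt(\varep)$, $d_0=d_0(\varep)$ uniformly over all $\omg_0\in\calX$ satisfying \eqref{cond_lamb_prime}, a class on which $\nrm{\omg_0}_{L^\infty}$ is unbounded, and the paper's own remark after its proof points out that an estimate of the form $\tau(t)\sim(d+t)$ is available only under an additional bound $\nrm{\omg}_{L^\infty}\le M$, with constants depending on $M$. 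The same uniformity objection rules out your appeal to finite-time desingularization (Marchioro--Pulvirenti requires concentration/$L^\infty$-type hypotheses; in the paper it is used only for Theorem~\ref{thm:main1}, not here). Moreover, even granting linear growth of the \emph{center of mass}, step (i) fails as stated: at positive times there is no support confinement (the $L^1\cap L^2$ perturbation can leave small mass arbitrarily close to the $x_2$-axis), and the paper's interaction bound \eqref{est_int_en} is proved only under a support assumption, which holds at $t=0$ but is not propagated.

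The gap is avoidable because only \emph{one-sided} information is ever needed, and this is exactly how the paper's Theorem~\ref{thm_lamb_nu} runs without any bootstrap, time window, or speed estimate. First, for $t>0$ the interaction energy is never estimated --- only its sign is used: since $\omg(t)\geq 0$ on $Q$ (preserved by odd-odd symmetry), $E_{\text{inter}}[\omg(t)]>0$, so energy conservation gives
\begin{equation*}
E[\omg(t)^{r}]\;\geq\;\tfrac{1}{2}E[\omg_0]\;=\;E[\omg_0^{r}]-E_{\text{inter}}[\omg_0],
\end{equation*}
and smallness of the interaction is needed only at $t=0$, where $d\ge d_0$ and \eqref{est_int_en} supply it. Second, the impulse needs no drift integration: Lemma~\ref{lemma_center} gives the monotone upper bound $\mu(t)\le\mu_0$ for all $t\ge 0$, while the \emph{lower} bound comes from the variational comparison --- since $\omg(t)^{r}\in\mathcal{A}_{\mu(t),\kappa_0}$ and by \eqref{iden_en_scaling} the maximal energy at impulse $\mu(t)$ equals $\frac{\mu(t)}{\pi}\sqrt{\kappa_0/\kappa_L}\,E[\omg_L]$, the preserved lower bound on $E[\omg(t)^{r}]$ forces $\mu(t)\geq\pi\sqrt{\kappa_L/\kappa_0}\,(1-\tfrac{5\alpha}{8})$. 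These two one-sided facts pin $\mu(t)$ near $\pi$ and make the deficit $E[\omg_L^{a_t,b_t}]-E[\omg(t)^{r}]$ small for every $t\ge 0$ simultaneously, after which the energy estimate (Proposition~\ref{prop:energy-estimate}, in scaled form \eqref{est_en_gen}) yields closeness to a translate of $\omg_L$. I would encourage you to rebuild the argument around these two observations: the circular dependence you identify is not something to be overcome --- it is something the correct proof never encounters.
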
 
	This result says that if initially two opposite-signed Lamb dipoles are sufficiently far from each other and non-negative in the first quadrant, then each dipole stays close to a Lamb dipole for all positive times, and they would keep moving apart as if they do not see each other. In the statement, restricting $t$ to non-negative values is essential; two  Lamb dipoles ``colliding'' with each other will lead to a serious distortion of the shape, see simulations in \cite{Fu00,Hesthaven} and also in \cite{GF89,Or92,VAvH,KrXu,Gur}.
	 
	The type of stability given in Theorem \ref{thm:main0} is not really specific to the Lamb dipole and carries over to a more general class of dipoles arising as energy maximizers under appropriate constraints, see Remark \ref{rem_general_dip} for the details. 
	
	\medskip

	Our second main result gives global-in-time $(t \in \bbR)$ orbital stability for a concentrated non-negative initial vortex $\rho_0$ in $Q$ that is a small perturbation of $\rho_0^*$ after some translation. Here, $\rho_{0}^*$ is the \emph{radially symmetric decreasing rearrangement} of $\rho_{0}$ (see \cite[Section 3.3]{LL} for its definition and properties).  Namely, we will show that for all time, the vortex stays close to some translation of $\rho_0^*$ in $L^1$ norm, and its center of mass roughly follows a single point vortex trajectory in $Q$. We recall that a single point vortex in $Q$ starting at $p$ (equivalently, four point vortices in $\bbR^{2}$ with odd-odd symmetry) has its trajectory given by $Z_p(t)$, which traverses the curve defined by the equation 
	\begin{equation}\label{eq:orbit-pv}
		\begin{split}
			\calO_{pv}(p) := \left\{ (x_1,x_2) \in Q :  \frac{(x_1x_2)^{2}}{x_1^2 + x_2^2} = \frac{(p_1p_2)^{2}}{p_1^2 + p_2^2}   \right\}
		\end{split}
	\end{equation}
	(\cite{Lamb,Yang}). For $x\in\calO_{pv}(p)$, as $x_1 \to +\infty$, we have $x_2 \to \frac{p_1 p_2}{\sqrt{p_1^2+p_2^2}}$, and similarly $x_1  \to \frac{p_1 p_2}{\sqrt{p_1^2+p_2^2}}$ as $x_2\to+\infty$.  See Figure~\ref{fig_orbit}(a) for an illustration of the curve $\mathcal{O}_{pv}(p)$ and Figure~\ref{fig_orbit}(b) for an illustration of Theorem~\ref{thm:main1}.

		\begin{figure}[htbp]
			\begin{center}
				\hspace*{-1cm}\includegraphics[scale=1]{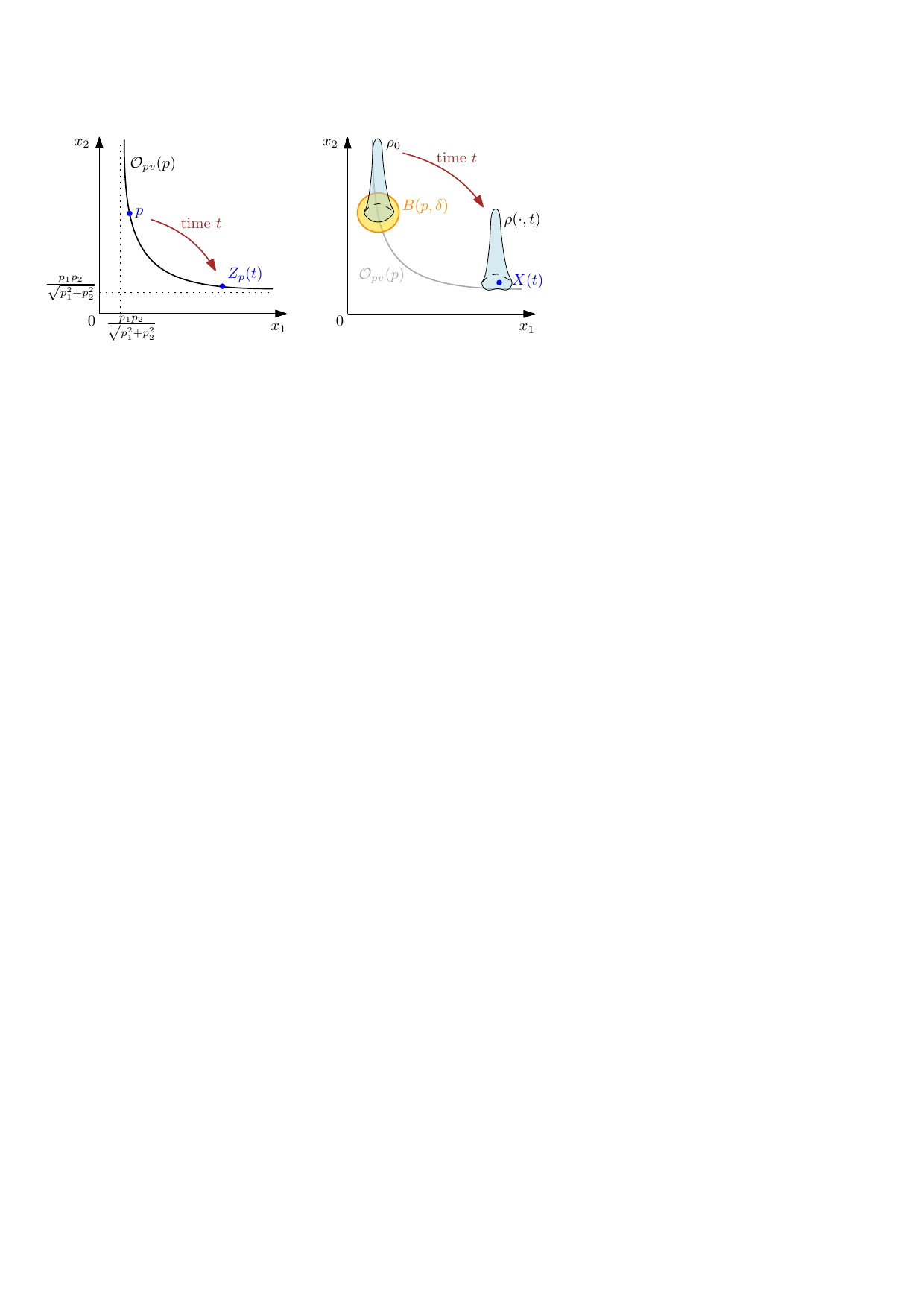}
				
				\emph{(a)  \hspace{5cm} (b)}
				\caption{\label{fig_orbit}
					(a) Illustration of the point vortex trajectory $\mathcal{O}_{pv}(p)$ in \eqref{eq:orbit-pv} and its asymptotic limits; (b) Illustration of Theorem~\ref{thm:main1}. Note that our stability result holds for both positive and negative times. }
			\end{center}
		\end{figure}

	For $p\in\bbR^2$ and $r>0$, we define $B(p,r)$ to be the open ball of radius $r$ centered at $p$. We use $\mathbf{1}_A$ to denote a characteristic function.
 
	\begin{theorem}\label{thm:main1} 
		Consider \eqref{eq:Euler} with odd-odd initial data $\omega_0$. Denote $\rho(\cdot,t) := \omega(\cdot,t)\mathbf{1}_Q$, and assume $\int_{\mathbb{R}^2} \rho_0(x) dx = 1$.
		
		For any $\varep>0, A\geq 1$ and  $p \in Q$, there exists $\delta_0 = \delta_0(\varep, A, p)$, such that for any $\delta \in (0,\delta_0)$, if 
		\begin{equation}
			\label{a_main}
			\supp\rho_0 \subset B(p,\delta),\quad  0\leq \rho_0 \leq A\delta^{-2}, \quad\text{and}  \quad\|\rho_0-\rho_0^*(\cdot-p)\|_{L^1}\leq \delta,
		\end{equation}
		then $\rho(\cdot,t)$ satisfies
		\begin{equation}
			\label{thm_goal1}
			\inf_{a\in \mathbb{R}^2}\|\rho(\cdot,t)-\rho_0^*(\cdot-a)\|_{L^1}\leq \varep \quad\text{ for all }t\in\mathbb{R},
		\end{equation}
		and the center of mass $X(t) := \int_{\mathbb{R}^2} x \rho(x,t)dx$ satisfies
		\begin{equation}
			\label{thm_goal2}
			\dist(X(t), \mathcal{O}_{pv}(p)) \leq \varep \quad\text{for all }t\in\mathbb{R}.
		\end{equation}
			\end{theorem}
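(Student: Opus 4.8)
The plan is to reduce everything to the single vortex $\rho=\omega\mathbf 1_Q$ and to exploit two exact conservation laws of its evolution in $Q$: the distribution function of $\rho$ (so that $\rho(\cdot,t)^*=\rho_0^*$ for all $t$), and the kinetic energy $E_Q[\rho]:=\tfrac12\int_Q\int_Q G_Q(x,y)\rho(x)\rho(y)\,dx\,dy$, where $G_Q$ is the Dirichlet Green's function of $Q$ obtained from the four odd-odd images. Writing $G_Q(x,y)=-\tfrac1{2\pi}\log|x-y|+h_Q(x,y)$ splits $E_Q=E_{\mathrm{self}}+E_{\mathrm{image}}$, where $h_Q$ is the smooth image part and $\gamma_Q(x):=h_Q(x,x)=\tfrac1{2\pi}\log\frac{2x_1x_2}{|x|}$ is the Robin function. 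A direct computation shows the level sets of $\gamma_Q$ are exactly the point-vortex orbits $\calO_{pv}$, and that the point-vortex velocity is $u_{\mathrm{image}}(X)=\tfrac12\nb^\perp\gamma_Q(X)$, so that $\gamma_Q$ is the point-vortex Hamiltonian. Conclusion \eqref{thm_goal1} is then equivalent to keeping $E_{\mathrm{self}}[\rho(t)]$ near its maximal value, and \eqref{thm_goal2} to keeping $X(t)$ spatially near the orbit $\calO_{pv}(p)$.

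I would first record two static inputs. By the Riesz rearrangement inequality for the decreasing kernel $-\log|\cdot|$, $E_{\mathrm{self}}[\rho]\le E_{\mathrm{self}}[\rho_0^*]$ for every $\rho$ with the conserved distribution; I would upgrade this to a quantitative stability statement for interaction kernels: if the gap $g(t):=E_{\mathrm{self}}[\rho_0^*]-E_{\mathrm{self}}[\rho(t)]$ is small, then $\inf_a\nrm{\rho(t)-\rho_0^*(\cdot-a)}_{L^1}$ is small, with a modulus independent of $t$. Second, using pointwise bounds on $h_Q$ and its derivatives that are uniform as the vortex approaches an axis or escapes to infinity, I would show that for $\rho(t)$ concentrated in a ball of radius $r$ about $X(t)$ at distance $\ell\gtrsim1$ from the axes, $E_{\mathrm{image}}[\rho(t)]=\tfrac12\gamma_Q(X(t))+O(r^2/\ell^2)$; the delicate point is that the individual logarithmic image terms blow up near the axes but cancel. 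Since the free-space kernel is odd, the self-induced momentum vanishes, so $\dot X(t)=\int_Q u_{\mathrm{image}}\rho\,dx=u_{\mathrm{image}}(X)+O(r^2)=\tfrac12\nb^\perp\gamma_Q(X)+O(r^2)$, i.e.\ $X$ obeys a perturbed point-vortex flow.

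Combining energy conservation with these inputs yields $\gamma_Q(X(t))=\gamma_Q(p)+2g(t)+o(1)$, giving the easy bound $\gamma_Q(X(t))\ge\gamma_Q(p)-o(1)$ but leaving both conclusions hostage to the reverse estimate $g(t)=o(1)$. Rather than bound $g$ directly, I would control the trajectory. From $\dot X=\tfrac12\nb^\perp\gamma_Q(X)+O(r^2)$ and $\nb\gamma_Q\cdot\nb^\perp\gamma_Q=0$ one gets $\frac{d}{dt}\gamma_Q(X(t))=O(r^2\,|\nb\gamma_Q(X)|)$, a genuine smallness that nonetheless must be integrated over all of time. This is where monotonicity of the first moment enters: I would show that a first moment such as $\int_Q x_1\rho\,dx$ is nondecreasing for $t\ge0$ (with $\int_Q x_2\rho\,dx$ nonincreasing), with drift bounded below while $\rho$ stays concentrated, so that $X(t)$ escapes monotonically along the orbit and never returns. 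Consequently $\ell(t)$ grows linearly, the strain $\sim\ell^{-2}$ and $|\nb\gamma_Q|\sim\ell^{-1}$ decay, and $\int_0^\infty r^2|\nb\gamma_Q(X)|\,dt$ converges and is $o(1)$. Hence $X(t)$ is an $o(1)$-perturbation of the exact orbit $\calO_{pv}(p)$ for all $t$, giving \eqref{thm_goal2}, while the energy identity then forces $g(t)=o(1)$.

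As stated the logic is circular—the estimate on $E_{\mathrm{image}}$, the leading-order form of $\dot X$, and the sign of the moment all presuppose that $\rho(t)$ stays concentrated—so I would run it as a continuity/bootstrap argument. The finite-time desingularization result supplies the base case and short-time concentration; assuming $r(t)$ stays below a threshold on $[0,T)$, the analysis above yields $g(t)=o(1)$, which via the quantitative rearrangement stability returns $\rho(t)$ to a translate of $\rho_0^*$ and hence a strictly improved concentration bound, closing the bootstrap and pushing $T$ to $+\infty$; the range $t\le0$ is identical after interchanging the two axes. I expect the main obstacle to be exactly this infinite-time control of the shape: a persistent $O(1)$ external strain would spread any passive blob, and what rescues the argument is (i) the variational rigidity of the radial vortex encoded in the rearrangement stability, and (ii) the fact, extracted from the monotone moment, that the vortex escapes so that the strain it feels is integrable in time. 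Making (ii) quantitative—a uniform lower bound on the escape rate and a genuinely $o(1)$ (as $\delta\to0$) bound on the accumulated deformation—together with the uniform kernel estimates for $h_Q$ near the axes, is the technical heart of the proof.
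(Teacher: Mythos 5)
Your proposal shares many ingredients with the paper's proof (the self/image energy split, Riesz rearrangement plus the quantitative stability of Theorem~\ref{thm_yy}, constancy of the image Hamiltonian along $\calO_{pv}(p)$, the finite-time desingularization of Proposition~\ref{prop:MP}, and the monotone moments of Lemma~\ref{lemma_center}), but your long-time mechanism is different and has two genuine gaps. First, the bootstrap cannot close as stated: quantitative rearrangement stability returns only $L^1$ closeness of $\rho(\cdot,t)$ to a translate of $\rho_0^*$, and $L^1$ closeness does not imply any support confinement --- a small amount of mass may lie arbitrarily far away (filamentation), and nothing in the argument prevents this under the Euler flow. Yet your expansion $E_{\mathrm{image}}[\rho(t)]=\tfrac12\gamma_Q(X(t))+O(r^2/\ell^2)$, the ODE $\dot X=\tfrac12\nb^\perp\gamma_Q(X)+O(r^2)$, and even the reliability of $X(t)$ as a proxy for the vortex location (mass $\varep$ at distance $D$ displaces the centroid by $\varep D$) all require genuine support, or at least strong moment, control at scale $r$. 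So the ``strictly improved concentration bound'' needed to re-enter the bootstrap is not produced by the stability estimate. This is precisely the obstruction the paper is designed to avoid: its Key Lemma (Lemma~\ref{lemma_K}) is a pointwise upper bound on $\log\left(|x-\bar y|/|x-y|\right)$ valid for \emph{all} $x,y$ in the half-plane, which, combined with Jensen's inequality (concavity of $g^H$) and the monotone decrease of $X_2(t)$, yields the energy upper bound with no support hypothesis whatsoever for $t\ge T$; support control is used only on the finite interval $[0,T]$.

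Second, your decay claim is false in the relevant geometry. The vortex escapes horizontally: $X_1(t)\to\infty$ while $X_2(t)$ is decreasing and stays bounded away from $0$ (the orbit asymptotes to the line $x_2=p_1p_2/\sqrt{p_1^2+p_2^2}$). Hence the distance to the $x_1$-axis does not grow, the strain exerted by the image in the fourth quadrant (the dipole partner at distance $2X_2$) tends to a positive constant rather than decaying, and $|\nb\gamma_Q(X(t))|$ stays bounded below by a positive constant. Consequently $\int_0^\infty r^2\,|\nb\gamma_Q(X(t))|\,dt$ diverges linearly in time, and your control of $\frac{d}{dt}\gamma_Q(X(t))$ over an infinite horizon fails unless one exhibits cancellations that you have not established. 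The paper never integrates anything over infinite time: it pins $X_2(t)$ between its monotone upper bound $X_2(T)$ and the energy-forced lower bound $(1-400\tilde\varep)\,X_2(T)$, and then uses the asymptotic flatness of $\calO_{pv}(p)$ together with the monotone increase of $X_1(t)$ to deduce \eqref{thm_goal2}; no ODE for the centroid and no strain estimate are needed.
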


	 Combining Theorem~\ref{thm:main1} with the scaling invariance\footnote{Recall if $\omega$ solves \eqref{eq:Euler}, then $\tilde\omega(x,t) = \delta^2\omega(\delta x, \delta^2 t)$ is also a solution.} of \eqref{eq:Euler}, we can rescale the initial data in Theorem~\ref{thm:main1} by a parameter $\delta$, such that both its support size and its $L^\infty$ norm is of order 1 after the rescaling. (Note that the initial support would be centered around the point $p/\delta$ after the rescaling.) This immediately leads to the following result, showing that odd-odd circular patches that starts faraway remain nearly circular (in terms of symmetric difference) for all times. See Figure~\ref{fig_patch} for an illustration.

\begin{corollary}
\label{cor_patch}
Consider \eqref{eq:Euler} with odd-odd initial data $\omega_0$ that is of patch type, with $\omega_0\mathbf{1}_Q = \mathbf{1}_{D_0}$ for some $D_0\subset Q$. Then $\omega(\cdot,t)$ is also of patch type for all $t\in\mathbb{R}$. Denote  the patch in $Q$ by $D(t)$.

For any $\varep > 0, p\in Q$, there exists $\delta_0 = \delta_0(\varep,p)$, such that for any $\delta\in(0,\delta_0)$, if 
\[
D_0 = B(\delta^{-1}p,\pi^{-1/2}),
\]then $D(t)$ satisfies
\[
\inf_{a\in\mathbb{R}^2}|D(t)\triangle B(a,\pi^{-1/2})|\leq \varep \quad\text{for all }t\in\mathbb{R}.
\]
\end{corollary}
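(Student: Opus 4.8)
The plan is to deduce Corollary~\ref{cor_patch} directly from Theorem~\ref{thm:main1} by exploiting the scaling invariance of \eqref{eq:Euler}, after first recording that the patch structure persists in time. Since $\omg_0\mathbf{1}_Q=\mathbf{1}_{D_0}$ takes only the values $0$ and $1$, and the Yudovich flow transports vorticity by a measure-preserving homeomorphism, $\omg(\cdot,t)\mathbf{1}_Q=\mathbf{1}_{D(t)}$ remains a characteristic function with $|D(t)|=|D_0|=1$ for all $t\in\bbR$; the odd-odd symmetry and the sign condition in $Q$ are preserved as well. Thus the only task is to control the shape of $D(t)$, and the natural route is to rescale the order-one patch into a concentrated vortex to which Theorem~\ref{thm:main1} applies.

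First I would rescale. Using the footnote scaling with parameter $\delta^{-1}$, set $\tld\omg(x,t):=\delta^{-2}\omg(x/\delta,t/\delta^2)$, which again solves \eqref{eq:Euler} and is odd-odd. Writing $\tld\rho_0:=\tld\omg_0\mathbf{1}_Q$, a direct computation gives $\tld\rho_0=\delta^{-2}\mathbf{1}_{B(p,\,\delta\pi^{-1/2})}$, since $x/\delta\in B(\delta^{-1}p,\pi^{-1/2})$ is equivalent to $|x-p|<\delta\pi^{-1/2}$. This $\tld\rho_0$ has total mass $\delta^{-2}\cdot\pi(\delta\pi^{-1/2})^2=1$, support contained in $B(p,\delta)$ (as $\pi^{-1/2}<1$), and height exactly $\delta^{-2}$, so the bound $0\le\tld\rho_0\le A\delta^{-2}$ holds with $A=1$. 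Crucially, because $\tld\rho_0$ is already a ball of area $1$ centered at $p$, it coincides with its own radial rearrangement translated to $p$, i.e. $\tld\rho_0=\tld\rho_0^{*}(\cdot-p)$, so the $L^1$ closeness hypothesis in \eqref{a_main} holds trivially (with distance $0\le\delta$). Hence, given $\varep>0$ and $p$, Theorem~\ref{thm:main1} (applied with $A=1$ and this $\varep$) furnishes $\delta_0=\delta_0(\varep,p)$ such that for $\delta<\delta_0$,
\begin{equation*}
\inf_{a\in\bbR^2}\nrm{\tld\rho(\cdot,t)-\tld\rho_0^{*}(\cdot-a)}_{L^1}\le\varep\qquad\text{for all }t\in\bbR.
\end{equation*}

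It remains to undo the scaling and convert $L^1$ distances of characteristic functions into symmetric differences. By patch persistence, $\tld\rho(\cdot,t)=\delta^{-2}\mathbf{1}_{\delta D(t/\delta^2)}$, while the rearrangement is $\tld\rho_0^{*}=\delta^{-2}\mathbf{1}_{B(0,\delta\pi^{-1/2})}$. For each $a$ one has
\begin{equation*}
\nrm{\tld\rho(\cdot,t)-\tld\rho_0^{*}(\cdot-a)}_{L^1}=\delta^{-2}\,\big|\,\delta D(t/\delta^2)\,\triangle\,B(a,\delta\pi^{-1/2})\,\big|=\big|\,D(t/\delta^2)\,\triangle\,B(a/\delta,\pi^{-1/2})\,\big|,
\end{equation*}
using $B(a,\delta\pi^{-1/2})=\delta B(a/\delta,\pi^{-1/2})$ and $|\delta E|=\delta^2|E|$. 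Taking the infimum over $a\in\bbR^2$ (equivalently over $a/\delta\in\bbR^2$) and letting $t$ range over $\bbR$ (so that $s=t/\delta^2$ also ranges over $\bbR$) yields $\inf_{b\in\bbR^2}|D(s)\triangle B(b,\pi^{-1/2})|\le\varep$ for all $s\in\bbR$, which is exactly the assertion of the corollary.

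The deduction is essentially bookkeeping, so I do not expect a genuine obstacle; the only points requiring care are the justification that the patch (together with the odd-odd symmetry and sign structure) is preserved by the Yudovich flow, and the consistent tracking of the scaling factors so that the symmetric difference comes out scale-invariant — which is precisely what makes the order-one conclusion independent of $\delta$.
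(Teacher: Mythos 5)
Your proposal is correct and matches the paper's own derivation: the paper obtains Corollary~\ref{cor_patch} precisely by rescaling the order-one patch into a concentrated vortex, applying Theorem~\ref{thm:main1} (with the rescaled data supported in $B(p,\delta)$, bounded by $\delta^{-2}$, and equal to its own translated rearrangement), and undoing the scaling, under which the $L^1$ distance of characteristic functions becomes the scale-invariant symmetric difference. Your verification of the hypotheses \eqref{a_main} with $A=1$ and the bookkeeping converting $\inf_a\|\tld\rho(\cdot,t)-\tld\rho_0^*(\cdot-a)\|_{L^1}$ into $\inf_b|D(s)\triangle B(b,\pi^{-1/2})|$ are exactly what the paper's remark before the corollary asserts.
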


		\begin{figure}[htbp]
			\begin{center}
				\hspace*{-1cm}\includegraphics[scale=1]{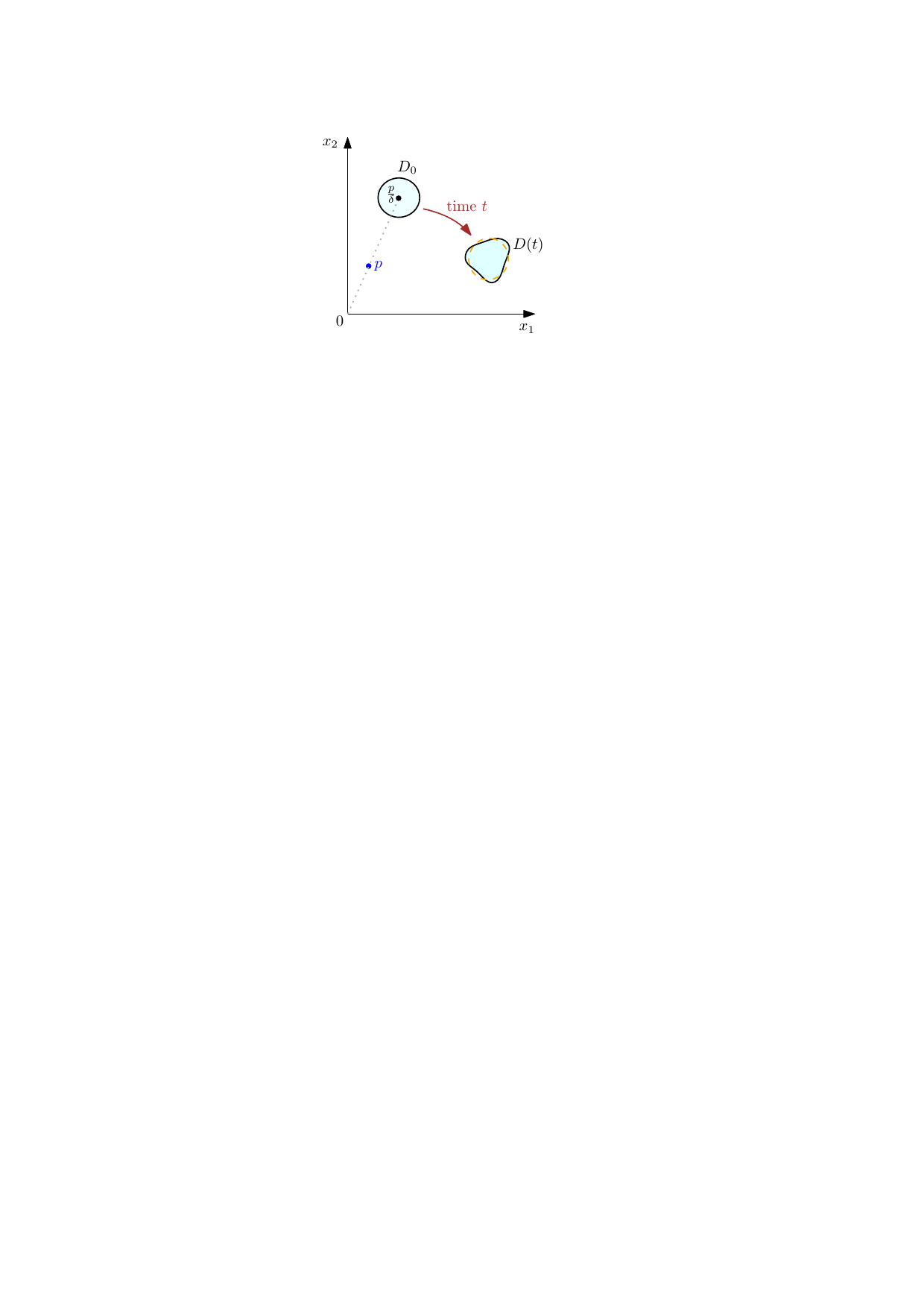}
				
				\caption{\label{fig_patch}
					Corollary~\ref{cor_patch} shows that odd-odd circular patches that starts faraway remain nearly circular  for all times.}
			\end{center}
		\end{figure}
	
	\subsection{Discussion} Our main results provide stability of several types of vortex quadrupoles (in the sense that the vorticity is highly concentrated near four points in the plane) under the odd-odd symmetry. This seems to be the first stability result for vorticities which are not relative equilibria of the Euler equations. More precisely, our first theorem, together with Remark \ref{rem_general_dip}, gives stability for a pair of energy-maximizing dipoles, and the second theorem for odd-odd quadrupoles of radial and monotone decreasing vorticities. These two results are closely related, as energy maximizing dipoles are in general asymptotically radial and monotone in the large impulse limit (see \cite{Burton88,Burton.2021}, \cite[Figure 7]{Leweke.Dizes.Williamson.2016}). The second main result can be also interpreted as a solution to the global desingularization problem for point vortex motion using any radial and monotone decreasing vorticity. To expand upon this point, let us compare our results with recent exciting developments in the desingularization problem under the odd-odd symmetry. 
	 
	\smallskip
	
	\begin{itemize}
		\item D\'{a}vila--del Pino--Musso--Parmeshwar \cite{DdPMP2} established the existence of a solution of \eqref{eq:Euler} that has the form \begin{equation}\label{eq:DdPMP}
			\begin{split}
				\omg(x,t) & = \frac{1}{\varep^{2}}\left[ W\left( \frac{x-ct \bfe_{1} - q \bfe_{2} }{\varep} \right) -  W\left( \frac{x-ct \bfe_{1} + q \bfe_{2} }{\varep} \right) \right. \\
				& \quad \quad \quad  \left. -  W\left( \frac{x + ct \bfe_{1} - q \bfe_{2} }{\varep} \right) +  W\left( \frac{x + ct \bfe_{1} + q \bfe_{2} }{\varep} \right) \right]  + o(1) 
			\end{split}
		\end{equation} for all $t \ge T_{0}$ for sufficiently large $T_{0}$ and for sufficiently small $\varep>0$, with the error term in \eqref{eq:DdPMP} satisfying quantitative decay estimates as $t\to\infty$. The proof is based on the gluing method, which has been very effective in constructing desingularized solutions for the Euler equations (\cite{DdPMW,DdPMW22,DdPMP}). Here, $W$ is a smooth radial vortex given by the solution of a specific elliptic problem, and the analysis in \cite{DdPMP2} builds upon their previous work \cite{DdPMP} which gives a detailed asymptotic information
		of the dipole profile $W$. 
		
		\smallskip 
		  
		\item Hassainia--Hmidi--Roulley \cite{HaHmRo} obtained the existence of time-periodic vortex patches in various bounded domains, and as a particular case, they could establish the existence of time-periodic, localized vortex patch quadruples with odd-odd symmetry, when the fluid domain is given by a disc. Namely, the time-periodic vortex patch traces the orbit of a single point vortex defined in a quarter disc. This is based on applying KAM theory together with the Nash--Moser iteration scheme; earlier remarkable applications of this theory to Euler  were given in \cite{BeHaMa, HaHmMa}.
	\end{itemize}
	
	\smallskip
	
	The results in \cite{DdPMP2,HaHmRo} provide the existence of initial data with odd-odd symmetry whose corresponding solution follows a point vortex orbit. Compared with these works, we do not have a precise control on the error term, except that it remains small in $L^{1}$. However, our method offers several advantages. To begin with, Theorem \ref{thm:main1} works with any profiles of radial and monotone vortex, as long as it is sufficiently concentrated. Similarly, Theorem \ref{thm:main0} could be applied to a more general family of dipoles (see Remark \ref{rem_general_dip}) and potentially to the solution constructed in \cite{DdPMP2} at $t = T_{0}$ as the initial data. Still, the Lamb dipole case is particularly interesting since the vortex profile is genuinely different from typical desingularized objects which are asymptotically radial. In particular, the support of the vortex touches the $x_{1}$-axis. Secondly, while the statement of \cite{DdPMP2} only deals with $t\ge T_{0}$, Theorem \ref{thm:main1} gives global $(t \in \bbR)$ stability, for a similar type of initial data. Lastly, let us point out again that our results do not just give existence of special solutions exhibiting certain behavior, but provide global-in-time stability under natural assumptions on the initial data.

	\subsection{Overview of the proof strategy}

	The basic idea behind our main results is that, if the initial vorticity consists of two dipoles faraway from each other, and each dipole is close to the kinetic energy maximizer under some constraint, then their shapes must remain close to the maximizer for all positive times -- if not, it would lower the kinetic energy. 
	
	We start with a well-known observation that is used in both of our main results. Note that the total kinetic energy $E[\omega(t)]$ is conserved, and in the odd-odd setting, by breaking any non-trivial vorticity into its left and right parts as \[
	\omega(\cdot,t)=\omega(\cdot,t) \mathbf{1}_{\{x_1>0\}} + \omega(\cdot,t) \mathbf{1}_{\{x_1<0\}} =: \omega^r(\cdot,t) - \omega^l(\cdot,t),\] the kinetic energy  can be decomposed into ``dipole energy'' and ``interaction energy'' as follows, where $N(x-y):=-\frac{1}{2\pi}\log|x-y|$: (see Figure~\ref{fig_dipole2} for an illustration)
	\[
	\begin{split}
	E[\omega(t)] &= \iint_{\mathbb{R}^2\times\mathbb{R}^2} \left(\omega^r(x,t) - \omega^l(x,t)\right) \left (\omega^r(y,t) - \omega^l(y,t)\right) N(x-y)  dxdy \\
	&= 2\underbrace{\iint_{\mathbb{R}^2\times\mathbb{R}^2} \omega^r(x,t) \omega^r(y,t)  N(x-y)  dxdy}_{=:E_{\text{dipole}}(t) = E[\omega^r(t)] } \, -\, 2 \underbrace{\iint_{\mathbb{R}^2\times\mathbb{R}^2} \omega^r(x,t) \omega^l(y,t) N(x-y)  dxdy}_{=:E_{\text{inter}}(t)},
		\end{split}
	\]
	where in the last step we used that $E[\omega^l(t)] = E[\omega^r(t)]$. Furthermore, in the odd-odd setting where $\omega_0\geq 0$ in $Q$, it is not difficult to check that $E_{\text{inter}}(t) > 0$ for all $t$. So if $\omega_0$ consists of two dipoles that are sufficiently far away such that $|E_{\text{inter}}(0)|\ll 1$, this leads to 
	\begin{equation}\label{diff_e0}
	E[\omega^r(0)] - E[\omega^r(t)] \ll 1 \quad\text{ for all }t,
	\end{equation}
	where the right hand side can be made arbitrarily small by placing the two dipoles far away initially.
	\begin{figure}[htbp]
		\begin{center}
		\hspace*{-1cm}\includegraphics[scale=0.9]{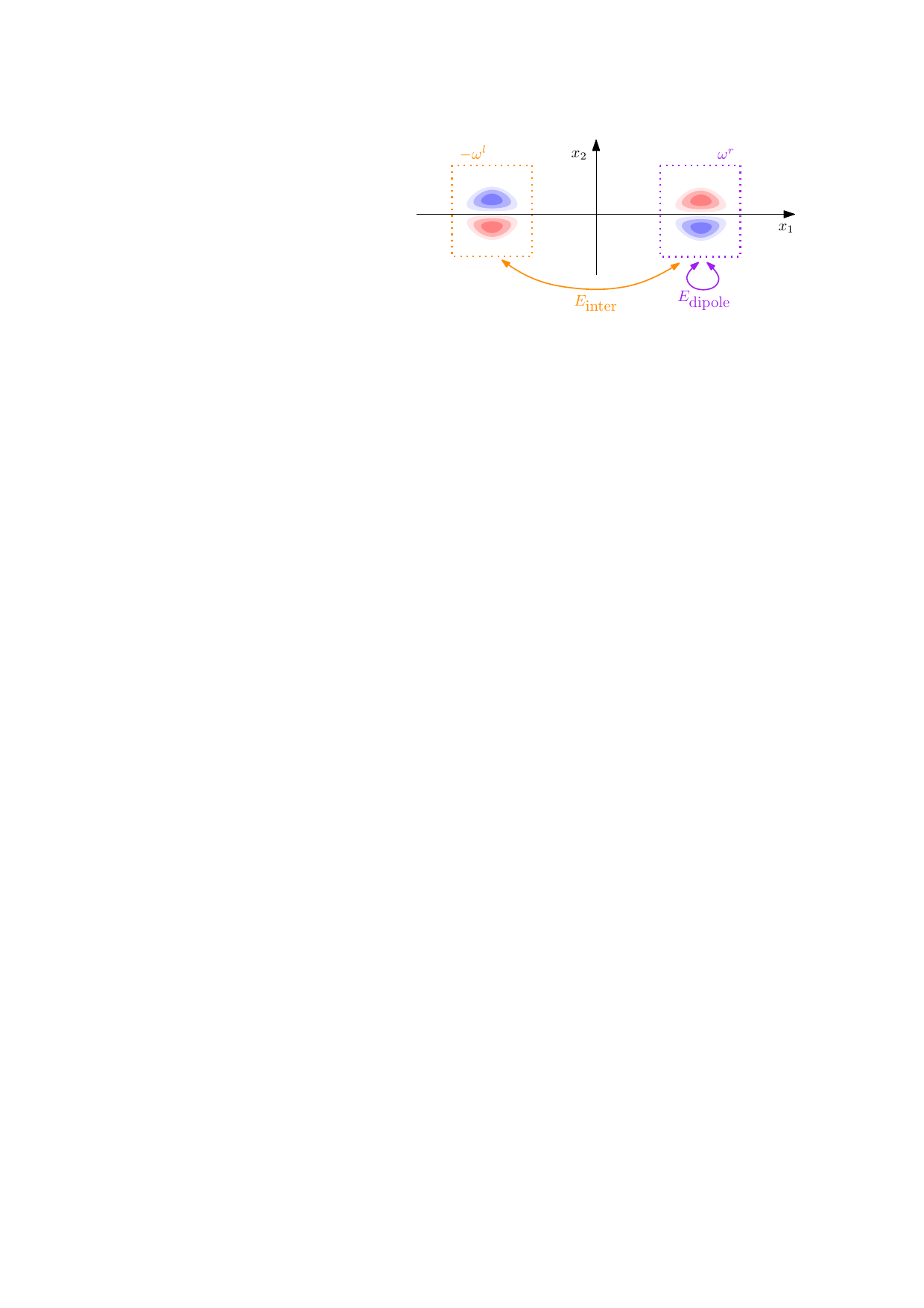}
		
		\caption{\label{fig_dipole2}
		Decomposing the kinetic energy into ``dipole energy'' and ``interaction energy''.  }
		\end{center}
		\end{figure}
	
	Let us choose $\omega^r(\cdot, 0)$ to be the unique maximizer of kinetic energy under some constraint. Heuristically, if there is some \emph{quantitative energy estimate} saying that ``the energy difference controls the shape difference'', and if $\omega^r(\cdot, t)$ also satisfies the same constraint for all $t\geq 0$, then we expect that the above inequality would imply $\omega^r(\cdot, t)$ always stays close to some translation of $\omega^r(\cdot, 0)$.

	Here, one constraint we will impose on the energy maximization problem is the vertical first moment in $Q$. We recall an important work of  Iftimie--Sideris--Gamblin \cite{ISG99}: if 
	$\omg\in\mathcal{X}$ is odd-odd and   $\omg(x)\ge0$ on $Q$,
	then we have monotonicity of the first moment: 
	\begin{lemma}[{Iftimie--Sideris--Gamblin \cite{ISG99}}]
		\label{lemma_center} Define the 
		first moment by \begin{equation*}\label{eq:first-moment}
			\begin{split}
				X(t) =  (X_1(t), X_2(t)), \qquad X_{i}(t) := \int_{Q} x_{i} \omg(x,t) dx .
			\end{split}
		\end{equation*} 
		Then, 
		$X_1(t)$ is increasing in time and $X_2(t)$ is decreasing in time.\footnote{The lemma is contained in the proof of \cite[Theorem 3.1]{ISG99}. Their main theorem gives, for any non-trivial odd-odd symmetric $\omg_{0}\in \calX$ with $\omg_0\geq0$ in $Q$, one has $X_{1}(t) \ge c_{0}t$, where $c_{0}>0$ is a constant depending only on the initial data. While it is remarkable that the statement holds for all such data from $\calX$, it does not give detailed information about the solution. (Still, some very interesting information regarding the infinite time behavior is given in \cite{ILN} under one odd symmetry.) Instead, our main results which will be described below obtain \textit{stability} of   quadrupoles \textit{near energy maximizers}, which provides, as a simple consequence, detailed bounds on $X_{1}(t)$ and $X_{2}(t)$. }
	\end{lemma}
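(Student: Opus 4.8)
The plan is to establish both monotonicity statements by differentiating $X_1$ and $X_2$ along the flow and reducing each time-derivative to a double integral over $Q\times Q$ against an \emph{explicit kernel of definite sign}. First I would use the transport equation in the form $\rd_t\omg = -\nb\cdot(u\omg)$ (legitimate since $\nb\cdot u=0$) to write, at least formally,
\[
\frac{d}{dt}X_i(t) = \int_Q x_i\,\rd_t\omg\,dx = \int_Q u_i\,\omg\,dx,
\]
where the integration by parts produces boundary terms on the two half-axes bounding $Q$. These vanish: on each axis either the weight $x_i$ is zero, or the odd-odd symmetry forces the normal component of $u$ to vanish (e.g.\ $u_2=0$ on the $x_1$-axis because the stream function is odd in $x_2$), so the boundary integrals drop out.

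Next I would exploit the odd-odd symmetry to fold the Biot--Savart integral for $u(x)$, $x\in Q$, back onto $Q$ alone by the method of images: writing $\tld{y}=(-y_1,y_2)$ and $\bar{y}=(y_1,-y_2)$, the four image charges at $y,\tld y,\bar y,-y$ carry signs $+,-,-,+$, so that $u_i(x)=\int_Q G_i(x,y)\,\omg(y)\,dy$ for an explicit kernel $G_i$ assembled from $K=\nb^\perp N$ evaluated at $x-y$, $x-\tld y$, $x-\bar y$, and $x+y$. Substituting this into the formula for $\tfrac{d}{dt}X_i$ yields a double integral over $Q\times Q$ whose integrand $\omg(x)\omg(y)G_i(x,y)$ I would symmetrize in $x\leftrightarrow y$. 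The key point is that after symmetrization the indefinite pieces cancel and one is left with a manifestly signed expression; for $i=1$,
\[
G_1(x,y)+G_1(y,x) = \frac{1}{\pi}\,(x_2+y_2)\,\frac{4x_1y_1}{d_2\,d_3}\ \ge\ 0,
\]
with $d_2=(x_1-y_1)^2+(x_2+y_2)^2$, $d_3=(x_1+y_1)^2+(x_2+y_2)^2$, and analogously $G_2(x,y)+G_2(y,x)=-\tfrac{1}{\pi}(x_1+y_1)\tfrac{4x_2y_2}{d_1 d_3}\le 0$ with $d_1=(x_1+y_1)^2+(x_2-y_2)^2$. Since $\omg\ge 0$ on $Q$, these pointwise signs immediately give $\tfrac{d}{dt}X_1\ge 0$ and $\tfrac{d}{dt}X_2\le 0$.

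The main obstacle is rigor rather than algebra: for $\omg_0\in\calX$ the solution is only Yudovich-class, the kernel $K$ is singular, and a priori $X_i(t)$ need not be differentiable, so the formal computation must be justified. I would do this by approximation, taking smooth, compactly supported, odd-odd data $\omg_0^{(n)}\to\omg_0$ for which the transport identity, the integration by parts, and Fubini are all valid, and then passing to the limit. Two features make the limit manageable: the class $\calX$ provides uniform control of the first moment, hence tightness, preventing moment from escaping to infinity; and the symmetrized kernels above decay faster than the individual Biot--Savart terms because their leading singularities cancel, which supplies the absolute integrability needed for Fubini and dominated convergence. The delicate point is to show that $X_i(t)$ is genuinely the time-integral of the signed quantity computed above, i.e.\ to obtain an absolutely-continuous-in-time representation; I would get this either from equicontinuity of $t\mapsto X_i^{(n)}(t)$ for the smooth approximants, or directly from the weak formulation of \eqref{eq:Euler} tested against a smooth truncation of $x_i\mathbf{1}_Q$, removing the truncation afterward using the moment bound.
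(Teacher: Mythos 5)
Your proposal is correct: the image decomposition with signs $+,-,-,+$, the cancellation of the antisymmetric (singular) pieces under $x\leftrightarrow y$ symmetrization, and the resulting signed kernels are all right (indeed $|x+y|^2-|x-\bar y|^2=4x_1y_1$ and $|x-\tilde y|^2-|x+y|^2=-4x_2y_2$ give exactly your formulas for $G_1+G_1^{\top}$ and $G_2+G_2^{\top}$), and the rigor issues you flag are handled by the standard Yudovich flow-map/approximation argument since the velocity is bounded and the symmetrized kernels are nonsingular. This is essentially the same argument as the proof of Theorem 3.1 in \cite{ISG99}, which is exactly what the paper cites for this lemma; the paper itself does not reprove it.
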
 
	
	Below, we will explain how to apply this strategy to both the Lamb dipole setting and the concentrated vortex setting. In each setting, we need to combine rather recent quantitative stability results with several new ingredients. 
	\medskip 
	
	\noindent \textbf{Stability of a pair of Lamb dipoles} (Theorem \ref{thm:main0}). To implement the above strategy, a main ingredient is the orbital stability of (a single pair of) Lamb dipole obtained recently in \cite{AC2019}. Roughly speaking, it says that if an odd symmetric $\omg_{0}$ is non-negative on $\bbR^{2}_{+}$ and is sufficiently close to $\omg_{L}$, then the solution $\omg(\cdot,t)$ corresponding to $\omg_{0}$ again satisfies \begin{equation}\label{eq:AC-Lamb}
		\begin{split}
			\nrm{ \omg(\cdot,t) - \omg_{L}(\cdot - \tau(t)) }_{L^{2}} \ll 1 
		\end{split}
	\end{equation} for all $t\in\bbR$ with some time-dependent shift $\tau(t)$. (The estimate $\tau(t) \simeq W_{L}t$, where $W_L>0$ is the traveling speed, was obtained later in \cite{CJ-Lamb}.) The statement \eqref{eq:AC-Lamb} is a consequence of the fact that $\omg_{L}$ is the unique energy maximizer (actually, a penalized energy \eqref{pen_en}) in the class \begin{equation*}
		\begin{split}
			\calA := \left\{ \omg\in\mathcal{X}\,: \, 
			\mbox{odd symmetric}, \,\, \omg\geq 0 \mbox{ in } \mathbb{R}^2_+, 
			 \,\, \mu[\omg]=\mu[\omg_{L}], \,\, \nrm{\omg}_{L^{2} } =  \nrm{\omg_{L}}_{L^{2} }
			\,\,\right\},		\end{split}
	\end{equation*} where \begin{equation*}
		\begin{split}
			\mu[\omg] := \int_{\bbR^2_+}x_{2}\omg dx. 
		\end{split}
	\end{equation*} While the proof of the orbital stability 
	statement is based on a highly involved contradiction argument, for our application it is essential to turn it into a  \textit{quantitative energy estimate} that says: energy difference controls shape difference from the Lamb dipole. To be more precise, for any constant $\nu\geq \|\omg_L\|_{L^1}$, if $\omg$  belongs to the class $\calA$ and if $\|\omg\|\leq \nu$, then we can prove (see Proposition~\ref{prop:energy-estimate}) \begin{equation}\label{eq:energy-estimate-Lamb}
		\begin{split}
			E[\omg_{L}] - E[\omg] \gtrsim \calF_\nu( \inf_{\tau\in\mathbb{R}}\nrm{\omg_{L} - \omg(\cdot_x-\tau e_1)}_{\calX} ),
		\end{split}
	\end{equation} where we use  an $L^p$ type norm for $\calX$ (see \eqref{defn_x_norm}), and $\calF_\nu$ is 
 a monotone increasing function satisfying $\calF_\nu(0)=0$ and $\calF_\nu(s)>0$ for $s>0$. 
	
	Suppose for simplicity that our (odd-odd symmetric) initial data on the right half plane is given by $\omega_0^r=\omega_L(\cdot-d)$ with $d\gg 1$. 
 By Lemma~\ref{lemma_center}, $\mu[\omega^r(t)]$ is decreasing in time while 
 the initial condition  $d\gg 1$ together with \eqref{diff_e0} guarantees that $\mu[\omg^r(t)],\,t\geq0$ remains sufficiently close to  $\mu[\omg^r_0]=\mu[\omg_L]$. 
 We can then apply a scaled version of  \eqref{eq:energy-estimate-Lamb} to $\omega = \omega^r(t)$ and combine it again with \eqref{diff_e0}, which leads to 
	\[
	\inf_{\tau\in\mathbb{R}}\nrm{\omg_{L} - \omg^r(t, \cdot_x-\tau e_1)}_{\calX}\ll 1 \quad\text{ for all }t\geq 0.
	\]   
	With a  more involved argument, this proof can be adapted to the general case where $\omega^r_0$ is close to (but not exactly equal to) a translation of the Lamb dipole $\omega_L(\cdot-d)$.

	The above argument carries over to general dipoles, once we have a characterization of them in terms of the kinetic energy maximizer. Obtaining dipole stability in this way goes back to the work of Kelvin \cite{kelvin1880}, Arnol'd \cite{Arnold66}, Norbury \cite{Norbury75}, Turkington \cite{Tu83}, Burton \cite{Burton88,Burton96,Burton.2021}, Yang \cite{Yang91} and Burton--Nussenzveig Lopes--Lopes Filho \cite{BNL13}. See also Luzzatto--Fegiz \cite{Luz}. While most of these works deal with dipoles separated from the axis of symmetry, the case of Lamb dipole was settled by Abe--Choi \cite{AC2019}. More recent developments are given by Wang \cite{Wang.2024}. The case of a single concentrated vortex in a disc, which is a similar setup with that of dipoles, is treated by  Cao--Wan--Wang--Zhan \cite{CWWZ}. 
	
	It is a very interesting problem to understand the dipole evolution under the Navier--Stokes flow. A major breakthrough was recently made in Dolce--Gallay \cite{DoGa}, with initial data consisting of two Dirac delta vortices with odd symmetry in $\bbR^{2}$. 
	
	Lastly, it is possible that the methods developed here could give stability of two axisymmetric vortex rings moving away from each other, including the case of Hill's spherical vortex based on \cite{Stability.of.Hill.vortex,CJ-Hill}.

	\medskip 
	
	\noindent \textbf{Stability of concentrated radially decreasing vortex in the quadrant} (Theorem \ref{thm:main1}). In the proof of Theorem~\ref{thm:main1}, we combine the aforementioned ideas from the dipole case with several new ingredients, which we discuss now. As one can see from the statement of Theorem \ref{thm:main1}, the initial data can be concentrated near any point in the quadrant, and therefore it suffices to understand the dynamics in the case $t \ge 0$, thanks to time reversibility of the Euler equations.

 To explain the main ideas of the proof, let us rescale the concentrated initial data to make both its support size and its $L^\infty$ norm to be order 1. (After the rescaling, the initial vortex in $Q$ is centered around $p_0=p/\delta$, which is very far away from the origin.) For simplicity, we consider the special case that the initial data is a vortex patch with area 1 in $Q$ centered at $p_0=p/\delta$. (Note that this is the setting in Corollary~\ref{cor_patch} and illustrated in Figure~\ref{fig_patch}): $\omg_{0}|_{Q} = \mathbf{1}_{D}$ where $D$ is an open set with $|D|=1$. 
 Furthermore, it is conceptually simpler to divide the stability proof into two cases, depending on the location of the center $p_{0} =: (l_{0}, h_{0}) \in Q$: (i) $l_{0} \gg h_{0}$ and (ii) $l_{0} \lesssim h_{0}$. 
	
	The first case is the heart of the proof, and we prove it in Theorem~\ref{thm:to-infty}. Again, our goal is to use \eqref{diff_e0} together with Lemma~\ref{lemma_center} to conclude that $\omega(t)|_Q$ stays close to a disk patch for all $t\geq 0$. Recall that for a \emph{single} disk patch in $\mathbb{R}^2$ (called the \emph{Rankine vortex}), it is well-known to be the energy maximizer among all patches with the same area, and there is a quantitative energy estimate. Namely, denoting by $\bar{\omega}_R$ the disk patch with area 1, \cite{WP85,Tang,Wan86,SV09, CLim22} showed that among all $\omg = \mathbf{1}_{D}$ with $|D|=1$, one has
	\begin{equation}
	\begin{split}\label{stability_patch}
			E[\bar{\omg}_{R}] - E[\omg] \gtrsim \inf_{a\in\mathbb{R}^2}\nrm{ \bar{\omg}_{R}(\cdot-a) - \omg}_{L^{1}}^{2}.
		\end{split}
	\end{equation} 

	However,  one cannot directly apply this stability result to \eqref{diff_e0}, since $\omega_r$ is a dipole, instead of a single vortex with a definite sign. One may be tempted to further decompose $E[\omega_r]$ as the ``self-energy'' and the rest: denoting $\rho := \omega \mathbf{1}_Q$, we have
	\[
	E[\omega_r] = 2\left( E[\rho] +\frac{1}{2\pi}  \iint_{\mathbb{R}^2} \rho(x)\rho(y) \log|x-\bar y|dxdy\right) =: 2 E[\rho] + 2 E_{\text{inter'}}\,,
	\] 
	where $\bar y = (y_1,-y_2)$, and the term $E_{\text{inter'}}$ comes from the interaction between the 1st and 4th quadrant. By \eqref{stability_patch}, $E[\rho]$ achieves its maximum when $\rho$ is a disk patch. Unfortunately, it does not maximize $E_{\text{inter'}}$, which is in fact unbounded from above among patches with the same center of mass and the same area. (To see this, one can stretch a disk patch horizontally while keeping its area and center. As its width being stretched to infinity, $E_{\text{inter'}}$ goes to $+\infty$, whereas $E[\rho]$ goes to $-\infty$.)
	
	To overcome this difficulty, we keep the dipole energy as a single double-integral given by 
	\[
	E[\omega_r] =\frac{1}{\pi} \iint_{\mathbb{R}^2\times\mathbb{R}^2} \rho(x)\rho(y) \log\frac{|x-\bar y|}{|x- y|} dxdy,
	\]
	and take a closer look at the kernel. At $t=0$, since our disk patch is centered at $(l_0,h_0)$ with $h_0\gg 1$, we easily have
	\begin{equation}\label{estimate_0}
	\log \frac{|x-\bar y|}{|x-y|} = -\log|x-y| + \log(2h_0) + O(h_0^{-1}) \quad\text{ for all }x,y \in \supp\rho_0 = B(p_0,1),
	\end{equation}
	In general, this estimate will not hold for future times, since $x,y\in \supp\rho(t)$ may get very far. The key technical ingredient of our argument is the following pointwise estimate on the kernel   (see Lemma \ref{lemma_K} below), where the upper bound resembles \eqref{estimate_0}. Namely, for $h_0\gg 1$, we have 
	\begin{equation}\label{estimate_t}
	\log \frac{|x-\bar y|}{|x-y|} \leq \varphi(|x-y|) + g^{h_0}(2x_2) + O(h_0^{-1}) \quad\text{ for all }x,y \in Q,
	\end{equation}
	where $\varphi(s)=-\log s$ when $s \leq 3$, and $g^{h_0}(s)$ is a concave function that is equal to $\log(2x_2)$ when $s>h_0$. Due to its concavity, as we integrate $g^{h_0}(2x_2)$ against $\rho(x,t)\rho(y,t)dxdy$, using Jensen's inequality with the fact that the vertical center of mass of $\rho$ decreases in time  by Lemma~\ref{lemma_center}, we can bound it from above by $\log(2h_0)$.
	
	Applying the estimates \eqref{estimate_0} and \eqref{estimate_t} to $E[\omega^r(0)]$ and $E[\omega^r(t)]$ respectively, and combining it with \eqref{diff_e0} and the fact that $h_0\gg 1$, we have
	\[
	\iint_{\mathbb{R}^2\times\mathbb{R}^2} \rho_0(x)\rho_0(y) \hspace*{-0.5cm}\underbrace{(-\log|x-y|)}_{=\varphi(|x-y|)  \text{ when }|x-y|\leq 3}  \hspace*{-0.5cm} dxdy - \iint_{\mathbb{R}^2\times\mathbb{R}^2} \rho(x,t)\rho(y,t) \varphi(|x-y|) dxdy \ll 1. 
	\]
	Since both integral contains the same kernel $\varphi(|x-y|)$ (where we used the fact that $\rho_0$ is supported in a disk with radius 1), we can apply a sharp estimate obtained recently in Yan--Yao \cite{YY} (which also holds for general densities, not just patch functions) to conclude that $\rho(\cdot,t)$ stays close to a translation of $\rho_0^*$ for all $t\geq 0$.
	
		To obtain the stability statement in the case (ii), namely when $l_{0} \lesssim h_{0}$, we apply the classical desingularization results of Marchioro--Pulvirenti \cite{MaP83,MaP93}. These results give convergence of evolution of highly concentrated vortex blobs to point vortices in the sense of measures, for a large but finite time interval. Even within a finite time window, we need to upgrade this statement to the following: approximately radial monotone vortices remains so as it moves along the point vortex orbit. (This type of strong desingularization result was already obtained in Davila--del Pino--Musso--Wei \cite{DdPMW} but for a specific radial profile.) For this part we need to apply similar energetic considerations and aforementioned pointwise estimates again.

	\subsection{Organization of the paper} We prove Theorems \ref{thm:main0} and \ref{thm:main1} in Sections \ref{sec:dipole} and \ref{sec:radial}, respectively. These two sections are logically independent of each other. 
	
	\ackn{KC was supported by the National Research Foundation of Korea(NRF) No. 2022R1A4A1032094, RS-2023-00274499. IJ was supported by the National Research Foundation of Korea  grant No. 2022R1C1C1011051, RS-2024-00406821. YY was supported by the NUS startup grant, MOE Tier 1 grant A-0008491-00-00, and the Asian Young Scientist Fellowship. 
	}

	\section{Stability for a pair of Lamb dipoles}\label{sec:dipole}

	\subsection{Notations}
	We recall the notation $\mathcal{X}$:
	\begin{equation*}
		{\mathcal{X}}:=\{
		\omega\in (L^1\cap L^\infty)(\mathbb{R}^2)\,:\,   \int_{\mathbb{R}^2}|x||\omega(x)| \,dx<\infty 
		\}.
	\end{equation*}
	For $\omg\in \mathcal{X}$, we define the stream function $\psi=\psi[\omg]$   by
	$$\psi(x):=[(-\Delta)^{-1}_{\mathbb{R}^2}\omega](x)\,dx=
	-\frac{1}{2\pi}\int_{\mathbb{R}^2}\omega(y)\log{|x-y|}\,dy
	$$ and the kinetic energy $E=E[\omega]$ by
	\begin{equation}\label{def_E}
	E[\omega]:=\int_{\mathbb{R}^2}\psi(x)\omega(x)\,dx=-\frac{1}{2\pi}\int_{\mathbb{R}^2}\int_{\mathbb{R}^2}\omega(x)\omega(y)\log{|x-y|}\,dydx.
	\end{equation}
	In this section, we always assume
	odd    symmetry for vorticities with respect to $x_2$-variable:
	$$ \omg(x_1,-x_2)=-\omg(x_1,x_2).$$
	For convenience, we define the subclasses $ \mathcal{X}_{\text{odd}}, \mathcal{X}_{\text{odd},+}$ of $\mathcal{X}$:
	$$ \mathcal{X}_{\text{odd}}:=\{
	\omega\in \mathcal{X}\,:\, \mbox{$\omega$ is odd symmetric with respect to $x_2$-variable}
	\}$$ and
	$$ \mathcal{X}_{\text{odd},+}:=\{
	\omega\in \mathcal{X}_{\text{odd}}\,:\, \omg\geq0 \mbox{ for } x_2>0
	\}.$$
	For   $\omega\in \mathcal{X}_{\text{odd}}$,  its kinetic energy is obtained by
	\begin{equation*}\label{defn_en_half}
		\begin{split}
			E[\omg] =2 \iint_{\bbR^2_+ \times \bbR^2_+} \omg(x) G_{\bbR^2_+}(x,y) \omg(y) dxdy,
		\end{split}
	\end{equation*}  where
	$$
	G_{\bbR^2_+}(x,y) := \frac{1}{4\pi} \log\left( 1 + \frac{4 x_2 y_2}{|x-y|^{2}} \right)>0\quad\mbox{for}\quad x\neq y. 
	$$
	We observe
	$E[\omega]\ge 0$ for any $\omg\in  \mathcal{X}_{\text{odd},+}$. For future use,  we borrow the following estimate:
	\begin{lemma}[Proposition 2.1 of \cite{AC2019}]\label{lem_est_en_diff}
		For any $\omg,\tilde\omg\in \mathcal{X}_{\textup{odd}}$, we have
		\begin{equation}\label{est_en_diff}
			|E[\omg]-E[\tilde\omg]|\lesssim
			\nrm{\omg-\tilde\omg}_{L^1(\mathbb{R}^2)}^{1/2}\cdot
			\nrm{\omg-\tilde\omg}_{L^2(\mathbb{R}^2)}^{1/2}\cdot
			\nrm{x_2(\omg+\tilde\omg)}_{L^1(\mathbb{R}^2)}^{1/2}\cdot
			\nrm{\omg+\tilde\omg}_{L^1(\mathbb{R}^2)}^{1/2}.
		\end{equation}
		
	\end{lemma}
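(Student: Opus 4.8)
The plan is to reduce the energy difference to a single \emph{positive} bilinear integral and then estimate it by H\"older's inequality in a way that exploits the precise $L^4_x$-size of the half-plane Green's function. Set $f:=\omg-\tld\omg$ and $g:=\omg+\tld\omg$, both of which again lie in $\mathcal{X}_{\textup{odd}}$. Since $E[\cdot]$ is the diagonal of the symmetric bilinear form $Q(a,b):=2\iint_{\bbR^{2}_{+}\times\bbR^{2}_{+}}a(x)G_{\bbR^{2}_{+}}(x,y)b(y)\,dxdy$, polarization together with the symmetry $G_{\bbR^{2}_{+}}(x,y)=G_{\bbR^{2}_{+}}(y,x)$ gives
\begin{equation*}
E[\omg]-E[\tld\omg]=Q(g,f)=2\iint_{\bbR^{2}_{+}\times\bbR^{2}_{+}} f(x)\,G_{\bbR^{2}_{+}}(x,y)\,g(y)\,dxdy .
\end{equation*}
As $G_{\bbR^{2}_{+}}>0$, it suffices to bound $\int_{\bbR^{2}_{+}}|f(x)|\Phi(x)\,dx$, where $\Phi(x):=\int_{\bbR^{2}_{+}}G_{\bbR^{2}_{+}}(x,y)|g(y)|\,dy$. (All norms may be taken over $\bbR^{2}_{+}$; by oddness they differ from the full-plane norms in \eqref{est_en_diff} only by absolute constants.)

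Next I would split $f$ and $\Phi$ by H\"older at the conjugate exponents $(4/3,4)$, chosen precisely so that interpolation recovers the first two factors of the claim:
\begin{equation*}
\int_{\bbR^{2}_{+}}|f|\,\Phi \;\le\; \nrm{f}_{L^{4/3}}\nrm{\Phi}_{L^4}\;\le\; \nrm{f}_{L^1}^{1/2}\nrm{f}_{L^2}^{1/2}\,\nrm{\Phi}_{L^4},
\end{equation*}
using $\nrm{f}_{L^{4/3}}\le \nrm{f}_{L^1}^{1/2}\nrm{f}_{L^2}^{1/2}$. By Minkowski's integral inequality, $\nrm{\Phi}_{L^4(dx)}\le \int_{\bbR^{2}_{+}}\nrm{G_{\bbR^{2}_{+}}(\cdot,y)}_{L^4(dx)}\,|g(y)|\,dy$, so the entire estimate is reduced to computing the $L^4_x$-norm of the kernel for fixed $y$.

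The key step -- and the only place the half-plane geometry enters -- is the identity $\nrm{G_{\bbR^{2}_{+}}(\cdot,y)}_{L^4_x}=c\,\sqrt{y_2}$ for an absolute constant $c$. This follows from the dilation covariance of the kernel: the substitution $x=(y_1,0)+y_2 z$ turns $\tfrac{4x_2y_2}{|x-y|^2}$ into $\tfrac{4z_2}{|z-e_2|^2}$, so $G_{\bbR^{2}_{+}}(x,y)=\Psi(z)$ with $\Psi(z):=\tfrac{1}{4\pi}\log\!\big(1+\tfrac{4z_2}{|z-e_2|^2}\big)$ independent of $y$, while $dx=y_2^2\,dz$. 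Hence $\nrm{G_{\bbR^{2}_{+}}(\cdot,y)}_{L^4_x}^4=y_2^2\int_{z_2>0}\Psi(z)^4\,dz$, and the last integral is finite: near the diagonal $z=e_2$ the integrand is only logarithmically singular (so $\Psi^4$ is locally integrable in two dimensions), and as $|z|\to\infty$ one has $\Psi(z)\le \tfrac{z_2}{\pi|z-e_2|^2}$ from $\log(1+u)\le u$, which decays fast enough that $\Psi^4$ is integrable on the half-plane. Finally, Cauchy--Schwarz supplies the remaining two factors,
\begin{equation*}
\int_{\bbR^{2}_{+}}\sqrt{y_2}\,|g(y)|\,dy\;\le\;\Big(\int_{\bbR^{2}_{+}} y_2|g|\Big)^{1/2}\Big(\int_{\bbR^{2}_{+}}|g|\Big)^{1/2}=\nrm{x_2 g}_{L^1}^{1/2}\nrm{g}_{L^1}^{1/2},
\end{equation*}
and combining everything yields \eqref{est_en_diff}.

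I expect the finiteness and scaling of $\int \Psi^4$ to be essentially the only technical point; the rest is bookkeeping of H\"older exponents. The exponent $4$ is not a free choice: the target puts the interpolation weight $\tfrac12$ between $L^1$ and $L^2$ on $f$, i.e.\ an $L^{4/3}$ norm, whose dual is $L^4$. It is a fortunate structural feature of the two-dimensional half-plane Green's function that it indeed lies in $L^4_x$ with the clean weight $\sqrt{y_2}$ -- precisely the weight that pairs, via Cauchy--Schwarz, with the first-moment norm $\nrm{x_2 g}_{L^1}$. Had the logarithmic singularity or the far-field decay been any worse, this exponent matching would fail, which is what makes the combination of norms in the statement the natural one.
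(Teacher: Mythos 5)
Your proof is correct, and there is in fact nothing in the paper to compare it against: the authors do not prove this lemma but import it wholesale from \cite{AC2019} (``we borrow the following estimate''), so your argument serves as a self-contained substitute for that citation. The skeleton --- polarization $E[\omg]-E[\tld\omg]=Q(\omg+\tld\omg,\omg-\tld\omg)$ using the half-plane representation of $E$ stated just before the lemma, H\"older at the exponents $(4/3,4)$ with the interpolation $\nrm{f}_{L^{4/3}}\le\nrm{f}_{L^1}^{1/2}\nrm{f}_{L^2}^{1/2}$, Minkowski's integral inequality, the scaling identity $\nrm{G_{\bbR^2_+}(\cdot,y)}_{L^4_x}=c\,y_2^{1/2}$, and Cauchy--Schwarz in $y$ --- is sound, and each step checks out: the substitution $x=(y_1,0)+y_2z$ does turn the kernel into the fixed profile $\Psi(z)=\tfrac{1}{4\pi}\log\bigl(1+\tfrac{4z_2}{|z-e_2|^2}\bigr)$ with $dx=y_2^2\,dz$, and $\int_{z_2>0}\Psi^4\,dz<\infty$ because the singularity at $z=e_2$ is only logarithmic, while for $|z-e_2|\ge\tfrac12$ the bound $\Psi(z)\le z_2/(\pi|z-e_2|^2)$ together with $z_2\le|z-e_2|+1\le 3|z-e_2|$ gives $\Psi(z)\lesssim|z-e_2|^{-1}$, hence $|z-e_2|^{-4}$ decay; this last remark also covers the intermediate region (including $z_2\to 0$), which your write-up passes over slightly quickly but which poses no difficulty. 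Two points are left implicit and are harmless: (i) the polarization identity requires the cross term $Q(\omg,\tld\omg)$ to be finite, which is justified a posteriori since your chain of inequalities actually bounds the absolute integral $\iint |a(x)|\,G_{\bbR^2_+}(x,y)\,|b(y)|\,dxdy$ for any pair $a,b$ with the stated norms finite; (ii) the passage from half-plane norms to the full-plane norms in \eqref{est_en_diff} only costs fixed factors of $2^{1/p}$, as you say. A pleasant feature of your route is that the $L^4_x$ computation is an exact scaling identity rather than a consequence of a pointwise kernel bound such as $G_{\bbR^2_+}(x,y)\le\tfrac{1}{2\pi}\log\bigl(1+\tfrac{2y_2}{|x-y|}\bigr)$ (obtained from $x_2\le y_2+|x-y|$), which is the kind of preliminary simplification one would otherwise reach for; either way, the exponent $4$ is forced exactly as you explain, by duality with the $L^1$--$L^2$ interpolation weight $\tfrac12$ in the target.
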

	\subsection{Lamb dipole (Chaplygin--Lamb dipole)}\label{intro_lamb}
	
	In this subsection, we consider 	the Lamb dipole {(or Chaplygin--Lamb dipole)  $\omega_L\in \mathcal{X}_{\text{odd},+}$ introduced  by 
		H. Lamb \cite[p231]{Lamb} in 1906 and,  independently, by S. A. Chaplygin in 1903 \cite{Chap1903}, \cite{Chap07} 
		(also see \cite{MV94}).} It is simply defined by 
	\begin{equation}   \label{lamb}
		\omega_L(x):=\begin{cases} 
			g(r)\sin\theta,\quad & r\leq 1,\\
			0 ,\quad & r>1,\end{cases}
	\end{equation} in polar coordinates $x_1=r\cos\theta$ and $x_2=r\sin\theta$, where $g$ is defined by
	\begin{equation*} 
		g(r):=\left(\frac{-2c_L}{    J_0(c_L)}\right)J_1(c_L r),\quad 0\leq r\leq 1.
	\end{equation*} 
	Here, $J_{m}(r)$ is the $m$-th order Bessel function of the first kind, the constant $c_L=3.8317\dots>0$ is the first (positive) zero point of $J_1$, and  $J_0(c_L)<0$.
	We note
	$g\in C^2([0,\infty))$ and satisfies
	$$g(0)=g(1)=0, 
	\quad  \mbox{and} \quad g(r)>0,\quad r\in(0,1).$$  
	The Lamb dipole $\omega_L\in \mathcal{X}_{\text{odd},+}$ satisfies  the following properties:
	\begin{enumerate}[label=(\alph*)]
		\item $\omg_L$ is Lipschitz continuous in $\mathbb{R}^2$.
	 
		\item The support of $\omg_L$ is the unit disk. More precisely, 
		$\omega(x)\neq 0$ if and only if $|x|<1$ and $x_2\neq 0$.
		 
		\item Its \emph{impulse} is $\mu_L:=\int_{\mathbb{R}^2_+}x_2\omega_L(x) \,dx=\pi.$
		\item It has unit traveling speed $W_L=1$ in the sense that $\omega_L(x-t e_1)$ is a solution for the 2D Euler equations.
		Indeed, it satisfies
		\begin{equation}\label{iden_psi_omg}
			\omega_{L}=(c_L)^2 f_L(\psi_L-  x_2)\quad\mbox{in}\quad\mathbb{R}^2_+:=\{x\in\mathbb{R}^2\,:\, x_2> 0\},  
		\end{equation} where $f_L$ is simply 
		\begin{equation*} 
			f_L(s)=s^+=\begin{cases} s,\quad &s>0,\\ 0,\quad &s\leq 0,\end{cases}
		\end{equation*}  
		and the stream function $\psi_L$ is defined by $\psi[\omg_L]$.
		\item  \label{property5}
		Let us denote $\kappa_L:=\int_{\mathbb{R}^2_+}(\omega_L)^2 \,dx\in(0,\infty)$.
		Then $\omega_L$ is a maximizer of the kinetic energy 
		$
		E[\omega] 
		$
		in the class 
		\begin{equation}\label{defn_cla_A}
			\mathcal{A}:=\left\{
			\omega\in \mathcal{X}_{\text{odd},+}:\, 
			\, \int_{\mathbb{R}^2_+}x_2\omega(x) \,dx=\pi\,\,\mbox{and}\,
			\int_{\mathbb{R}^2_+}\omega^2(x) \,dx=\kappa_L
			\right\}.
		\end{equation}
		
		\item Every other maximizer of kinetic energy in $\mathcal{A}$ is a $x_1$-translation of $\omega_L$.\\
	\end{enumerate}
	The velocity $u_L$ also can be written explicitly due to the relation
	$
	u_L=-\nabla^{\perp}\psi_L$ (e.g. see \cite{CJ-Lamb}).
	\begin{remark}
		All the properties above can be easily found in literature including \cite{Lamb, AC2019} except \ref{property5}. However, it is easy to deduce the property \ref{property5}   from  the result of \cite{AC2019}. Indeed,    \cite[Theorem 1.5]{AC2019} says, after adjusting to our notation, that $\omega_L$ is the unique maximizer (up to $x_1$-translations)  of the \textit{penalized} energy 
		\begin{equation}\label{pen_en}
			\hat{E}[\omg]:=E[\omega]-\frac{2}{(c_L)^2}\int_{\mathbb{R}^2_+} \omega^2\,dx
		\end{equation}
		in the following (larger) admissible class
		$$\hat{K}:=\left\{\omg\in \mathcal{X}_{\text{odd},+}:\,
		\int_{\mathbb{R}^2_+}x_2\omega(x) \,dx=\pi \right\}.
		$$ Since $\omega_L\in\mathcal{A}\subset \hat{K}$, 
		the property \ref{property5} is obtained.
	\end{remark}
	
	\medskip
	
	By scaling, we have a two-parameter family of Lamb dipoles: 
	\begin{equation}\label{defn_lamb_ab}
		\omega_L^{a,b}(x)=b\cdot\omega_L\left(\frac{x}{a}\right),\quad a>0,\quad b>0.
	\end{equation}
	Each dipole $\omega_L^{a,b}$ is a maximizer of the kinetic energy in the following admissible class 
	\begin{equation*}
		\mathcal{A}_{\mu,\kappa}:=\left\{
		\omega\in \mathcal{X}_{\text{odd},+}:\, 
		\, \int_{\mathbb{R}^2_+}x_2\omega(x) \,dx=\mu\mbox{ and }
		\int_{\mathbb{R}^2_+}\omega^2(x) \,dx=\kappa
		\right\}
	\end{equation*} (e.g. $\mathcal{A}=\mathcal{A}_{\pi,\kappa_L}$)  for the choice $$\mu(a,b)=(a^3\cdot b)\pi\quad\mbox{and}\quad \kappa(a,b)=(a\cdot b)^2\kappa_L,$$
	or equivalently
	\begin{equation}\label{formula_ab} 
		a(\mu,\kappa)=(\mu/\pi)^{1/2}(\kappa_L/\kappa)^{1/4}\quad\mbox{and}\quad
		b(\mu,\kappa)=(\pi/\mu)^{1/2}(\kappa/\kappa_L)^{3/4}.
	\end{equation} Then we get the traveling speed
	$W_L^{a,b}=ab W_L=ab=(\kappa/\kappa_L)^{1/2}$ and the energy
	\begin{equation}\label{iden_en_scaling} 
		E[\omega_L^{a,b}]=a^4\cdot b^2 \cdot E[\omega_L]=(\mu/\pi)\cdot(\kappa/\kappa_L)^{1/2}\cdot E[\omg_L].
	\end{equation} We also note
	\begin{equation}\label{conv_L1}
		\omg_{L}^{a,b}\to\omg_{L} \quad\mbox{in} \quad {L^1(\mathbb{R}^2)}\quad\mbox{as}\quad (a,b)\to (1,1)\quad\mbox{or equivalently}\quad (\mu,\kappa)\to(\pi,\kappa_L).
	\end{equation}

	\begin{remark}
		The relation \eqref{iden_en_scaling} gives the following consequence:\\  The dipole $\omega_L$ is the maximizer of the kinetic energy up to $x_1$-translations in the   admissible class\begin{equation*} 
			\left\{
			\omega\in \mathcal{X}_{\text{odd},+}:\, 
			\, \int_{\mathbb{R}^2_+}x_2\omega(x) \,dx\leq \pi\,\,\mbox{and}\,
			\int_{\mathbb{R}^2_+}\omega^2(x) \,dx\leq \kappa_L
			\right\},
		\end{equation*} 
		which is larger than the class $\mathcal{A}$ in \eqref{defn_cla_A}.\end{remark} 
	\begin{remark} The constant $\kappa_L$ can be explicitly obtained:
		\begin{equation}\label{iden_kappa} 
			\kappa_L=\pi\cdot (c_L)^2.
		\end{equation}
		Indeed, when 
		$\mu=\pi$ with general $\kappa\in(0,\infty)$, which means
		$a=(\kappa_L/\kappa)^{1/4}$ and
		$b=(\kappa/\kappa_L)^{3/4}$, we have, by the fact that $\omega_L$ minimizes \eqref{pen_en},
		$$\hat{E}[\omg_L^{a,b}]\leq \hat{E}[\omg_L]\quad\mbox{for any}\quad \kappa\in(0,\infty).$$ By \eqref{iden_en_scaling}, we get
		$$
		\sqrt{\frac{\kappa}{\kappa_L}}\cdot E[\omg_L]-\frac{2\kappa}{(c_L)^2}\leq E[\omg_L]-\frac{2\kappa_L}{(c_L)^2}\quad\mbox{for any}\quad \kappa\in(0,\infty).
		$$ It implies
		$$E[\omg_L]=\frac{4\kappa_L}{(c_L)^2}.$$ On the other hand, from \eqref{iden_psi_omg},
		\begin{equation*}\begin{split}
				\frac{1}{2}E[\omg_L] &=\int_{\bbR^2_+}\psi_L(x)\omg_L(x)dx
				=\int_{\bbR^2_+}(\psi_L(x)-x_2)\omg_L(x)dx+\int_{\bbR^2_+}x_2\omg_L(x)dx\\
				&=\int_{\bbR^2} \frac{\omg_L(x)}{(c_L)^2}\cdot\omg_L(x)dx+\pi=\frac{\kappa_L}{(c_L)^2}+\pi,
			\end{split}
		\end{equation*} which gives \eqref{iden_kappa}.
		 
	\end{remark}
	\subsection{Stability on the Half-plane} 
	For $\omega\in \mathcal{X}$, we use the following norm:
 \begin{equation}\label{defn_x_norm}
	\|\omega\|_\mathcal{X}:=\int_{\mathbb{R}^2}|x_2||\omega(x)|\,dx+\sqrt{\int_{\mathbb{R}^2}|\omega(x)|^2\,dx}.   
 \end{equation}
	First, we borrow the following compactness statement from \cite{AC2019}: \begin{proposition}\label{prop_cpt_lamb}
		Let $\{ \omg_{n} \}\subset \mathcal{A}$ defined in \eqref{defn_cla_A} be a sequence of vorticities 	 
		such that \begin{equation*}
			\begin{split}
				\sup_n\|\omega_n\|_{L^1}<\infty\quad\mbox{and}\quad	 
				E[\omg_{n}] \to E[\omg_{L}]\quad \mbox{as}\quad n\to\infty. 
			\end{split}
		\end{equation*} Then 
		there exists a subsequence $\{\omg_{n_{k}}\}_{k=1}^\infty$ and a sequence
		$\{\tau_k\}_{k=1}^\infty\subset \mathbb{R}$ such that  
		$$\|\omg_{n_{k}}(\cdot + \tau_{k} e_{1}) - \omg_{L}\|_\mathcal{X}\to 0\quad\mbox{as}\quad k \to \infty.$$  
	\end{proposition}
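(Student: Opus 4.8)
The plan is to run Lions' concentration--compactness argument, exploiting two structural facts: the class $\mathcal{A}$ is invariant only under translations in the $e_1$ direction (the odd symmetry is about the fixed axis $\{x_2=0\}$, so vertical shifts are forbidden), and $E[\omega_L]=\max_{\mathcal{A}}E$ with the maximizer unique up to such translations (property \ref{property5} and property (f)). Thus $\{\omega_n\}$ is a maximizing sequence, and the goal is to show that, after passing to a subsequence and translating horizontally, it converges strongly in $\mathcal{X}$ to $\omega_L$. First I would record the a priori bounds: the constraint $\int_{\bbR^2_+}\omega_n^2=\kappa_L$ and the oddness give that $\|\omega_n\|_{\mathcal{X}}$ is a fixed finite constant (indeed $\int_{\bbR^2}|x_2||\omega_n|=2\int_{\bbR^2_+}x_2\omega_n=2\pi$ and $\int_{\bbR^2}\omega_n^2=2\kappa_L$), so the sequence is bounded in $L^2(\bbR^2_+)$ and in $\mathcal{X}$; after a subsequence $\omega_n\rightharpoonup\omega_\infty$ weakly in $L^2(\bbR^2_+)$. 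I then apply the trichotomy to the mass densities $\omega_n\mathbf{1}_{\bbR^2_+}$, which have bounded total mass, through the concentration function $Q_n(R):=\sup_y\int_{B(y,R)}\omega_n$.

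To exclude vanishing, I would show that $Q_n(R)\to 0$ for every $R$ forces $E[\omega_n]\to 0$, contradicting $E[\omega_L]>0$. Splitting $G_{\bbR^2_+}(x,y)=\tfrac1{4\pi}\log(1+4x_2y_2/|x-y|^2)$ into $|x-y|>R$ and $|x-y|\le R$, the far part is bounded using $\log(1+t)\le t$ by $O(R^{-2})$ uniformly in $n$, thanks to the fixed impulse $\int_{\bbR^2_+}x_2\omega_n=\pi$; the near part is controlled by $\|\omega_n\|_{L^2}$ together with $Q_n(R)$ via a Cauchy--Schwarz/local-mass splitting of the logarithmic singularity. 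The same estimate rules out vertical escape, since the impulse constraint forces a blob drifting to height $h$ to carry mass $O(1/h)$ and hence vanishing self-energy. To exclude dichotomy, suppose the mass splits into two pieces separated horizontally by a distance $\to\infty$: the cross energy $\iint\omega_n^{(1)}G_{\bbR^2_+}\omega_n^{(2)}\to 0$ because the kernel decays with the separation, while the constraints split additively, $\mu_1+\mu_2=\pi$ and $\kappa_1+\kappa_2=\kappa_L$ with both pieces nontrivial. Using the explicit maximal energy $I(\mu,\kappa)=(\mu/\pi)(\kappa/\kappa_L)^{1/2}E[\omega_L]$ on the class $\mathcal{A}_{\mu,\kappa}$ from \eqref{iden_en_scaling}, the elementary inequality $(\kappa_1+\kappa_2)^{1/2}>\kappa_i^{1/2}$ yields the strict superadditivity $I(\pi,\kappa_L)>I(\mu_1,\kappa_1)+I(\mu_2,\kappa_2)$, so $\lim E[\omega_n]<E[\omega_L]$, a contradiction.

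In the remaining compactness case there are shifts $\tau_k\in\bbR$ (which may be taken horizontal, since the impulse and near-maximal energy pin the vertical location) such that $\tilde\omega_k:=\omega_{n_k}(\cdot+\tau_k e_1)$ is tight; tightness together with the uniform $\mathcal{X}$-bound gives convergence of the impulse, so $\mu[\omega_\infty]=\pi$, and preserves oddness and nonnegativity in $\bbR^2_+$. Because tightness confines the kernel to a bounded region where $G_{\bbR^2_+}$ is a bounded-plus-log-singular function, while the tail is absorbed by the far-field $O(R^{-2})$ bound, the energy actually converges, $E[\tilde\omega_k]\to E[\omega_\infty]=E[\omega_L]$. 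Writing $\kappa_\infty:=\int_{\bbR^2_+}\omega_\infty^2\le\kappa_L$, the bound $E[\omega_\infty]\le I(\pi,\kappa_\infty)=(\kappa_\infty/\kappa_L)^{1/2}E[\omega_L]$ forces $\kappa_\infty=\kappa_L$, so $\omega_\infty\in\mathcal{A}$ is a maximizer; by property (f) it equals $\omega_L(\cdot-ce_1)$, and absorbing $c$ into $\tau_k$ we take $\omega_\infty=\omega_L$. Finally $\|\tilde\omega_k\|_{L^2}=\|\omega_L\|_{L^2}$ with weak $L^2$ convergence upgrades to strong $L^2$ convergence, and tightness gives convergence of $\int_{\bbR^2}|x_2||\tilde\omega_k-\omega_L|$, hence $\|\tilde\omega_k-\omega_L\|_{\mathcal{X}}\to 0$.

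The main obstacle is the energy analysis on the half-plane. Since $G_{\bbR^2_+}$ is not a convolution kernel and grows like $\log(x_2 y_2)$ at large height, both the exclusion of vanishing/vertical escape and the vanishing of the dichotomy cross-term require carefully pairing this $x_2y_2$-growth against the fixed impulse (Lemma \ref{lem_est_en_diff} being the prototype of the estimates needed). Once these quantitative energy bounds are established, the superadditivity afforded by \eqref{iden_en_scaling} makes the dichotomy step elementary and the rest of the argument is routine.
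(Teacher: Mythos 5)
First, a remark on the comparison itself: the paper does not prove Proposition~\ref{prop_cpt_lamb} at all --- it is imported as a black box from \cite[Theorems 1.3 and 1.5]{AC2019}. So your proposal is not an alternative to an argument in the paper, but an attempt to reconstruct the Abe--Choi compactness theorem. The broad strategy (concentration--compactness, with strict superadditivity supplied by the scaling identity \eqref{iden_en_scaling} and far-field kernel bounds supplied by the fixed impulse) is the right family of techniques, and several ingredients are correct as stated: the $O(R^{-2})$ far-field bound via $\log(1+t)\le t$, the exclusion of vanishing (including vertical escape), and the decay of the cross term between well-separated pieces.

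The genuine gap is structural: you run the trichotomy on the $L^1$ mass densities $\omg_n\mathbf{1}_{\bbR^2_+}$, but neither the constraints defining $\mathcal{A}$ nor the norm $\|\cdot\|_{\mathcal{X}}$ in the conclusion involve the $L^1$ mass; they involve the impulse density $x_2\omg_n$ and the $L^2$ density $\omg_n^2$, and these are \emph{not} controlled by $L^1$ smallness (a patch of unit amplitude and area $\beta/h$ at height $h$ carries impulse $\approx \beta$ while its $L^1$ and $L^2$ masses are $O(1/h)$). This breaks your argument at two places. (i) In the dichotomy step, the claimed additive splitting $\mu_1+\mu_2=\pi$, $\kappa_1+\kappa_2=\kappa_L$ is unjustified, since the $L^1$-negligible remainder in Lions' decomposition can carry a fixed positive amount of impulse; worse, the strict inequality $I(\pi,\kappa_L)>I(\mu_1,\kappa_1)+I(\mu_2,\kappa_2)$ degenerates to equality when $(\mu_2,\kappa_2)=(0,0)$, which is exactly what a piece carrying only $L^1$ mass produces. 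In fact $L^1$-dichotomy genuinely occurs along maximizing sequences: take $\omg_n:=\omg_L^{a_n,b_n}+(\mbox{bump of }L^1\mbox{ mass }\tfrac12\mbox{ at horizontal distance }n,\mbox{ with amplitude and height}\to0)$, with $(a_n,b_n)\to(1,1)$ chosen via \eqref{formula_ab} so that $\omg_n\in\mathcal{A}$ exactly; then $E[\omg_n]\to E[\omg_L]$ and $\|\omg_n-\omg_L\|_{\mathcal{X}}\to0$, yet the $L^1$ mass dichotomizes. So the dichotomy alternative for the $L^1$ density cannot be \emph{excluded} --- it must be shown \emph{harmless}, which your superadditivity argument does not do. (ii) In the compactness step, $L^1$-tightness does not yield $\mu[\omg_\infty]=\pi$, nor $E[\tilde\omg_k]\to E[\omg_\infty]$ (the kernel $G_{\bbR^2_+}$ grows like $\log(x_2y_2)$, so energy and impulse can leak vertically with negligible $L^1$ mass), nor the final convergence $\int|x_2|\,|\tilde\omg_k-\omg_L|\,dx\to0$, whose tail is precisely a tail of the impulse density; each of these silently assumes tightness of $x_2\omg_n\,dx$.

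The natural repair is to run concentration--compactness on the measures $d\nu_n:=\left(x_2\omg_n+\omg_n^2\right)dx$ on $\bbR^2_+$, whose total mass equals $\pi+\kappa_L$ exactly by the constraints in \eqref{defn_cla_A}, and whose tightness after horizontal translation is precisely what $\mathcal{X}$-convergence requires; your energy estimates and the superadditivity from \eqref{iden_en_scaling} can then be redone relative to $\nu_n$, with a case analysis over how impulse and $L^2$ mass distribute between pieces (the degenerate case $(\mu_i,\kappa_i)=(0,0)$ now corresponds to a $\nu_n$-negligible piece and disappears). Alternatively, keep the $L^1$-based decomposition but add, at the dichotomy and compactness steps, the same kind of quantitative leakage-exclusion argument you sketched for vanishing; as written, those steps contain claims that are not merely unproved but false.
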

	Here $\omg_L$ is the Lamb dipole defined in \eqref{lamb}. This compactness result is  contained in    \cite[Theorems 1.3 and 1.5]{AC2019}. We now show that such a compactness statement implies an \textbf{energy estimate}:
	\begin{proposition}[Energy estimate]\label{prop:energy-estimate}  
		Let $\nu>0$ be any constant satisfying $\nu\geq\|\omg_L\|_{L^1}$. Then there exists  a 
  function $F_{\nu} : [0,\infty) \to [0,\infty)$ satisfying \begin{itemize}
			\item $F_{\nu}$ is   monotonically increasing,
			\item $F_{\nu}(0)=0$ and $F_{\nu}(s)>0$ for any $s>0$,
			\item {Energy estimate:}  for all $\omg\in \mathcal{A}$ satisfying $\|\omg\|_{L^1}\leq\nu$, we have
			\begin{equation*}\label{est_en}
				\begin{split}
					E[\omg_{L}] - E[\omg] \ge F_{\nu}( \inf_{\tau\in\mathbb{R}}\nrm{\omg_{L} - \omg(\cdot +\tau  e_1)}_{\mathcal{X}} ).
				\end{split}
			\end{equation*} 
		\end{itemize}
	\end{proposition}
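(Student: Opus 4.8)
The plan is to convert the qualitative compactness of Proposition~\ref{prop_cpt_lamb} into the desired quantitative estimate by the standard ``infimum over a distance level set'' device. Write $d(\omega) := \inf_{\tau\in\mathbb{R}}\nrm{\omega_L - \omega(\cdot + \tau e_1)}_{\mathcal{X}}$ for the $\mathcal{X}$-distance from $\omega$ to the $x_1$-translation orbit of the Lamb dipole, and for $s\ge 0$ define
\[
F_\nu(s) := \inf\left\{ E[\omega_L] - E[\omega] \ : \ \omega \in \mathcal{A}, \ \nrm{\omega}_{L^1} \le \nu, \ d(\omega) \ge s \right\}.
\]
With this definition any admissible $\omega$ belongs to the feasible set defining $F_\nu(d(\omega))$, so the energy estimate $E[\omega_L]-E[\omega] \ge F_\nu(d(\omega))$ holds by construction; the entire content of the proposition is therefore to verify the three listed structural properties of $F_\nu$.

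Monotonicity is immediate, since the feasible set shrinks as $s$ grows, so $F_\nu$ is non-decreasing. Nonnegativity follows from property~\ref{property5}: because $\omega_L$ maximizes $E$ over $\mathcal{A}$, we have $E[\omega_L]-E[\omega]\ge 0$ for every $\omega\in\mathcal{A}$. At $s=0$ the constraint $d(\omega)\ge 0$ is vacuous, and the choice $\omega=\omega_L$ (which lies in $\mathcal{A}$ since $\nu\ge\nrm{\omega_L}_{L^1}$) has deficit $0$, giving $F_\nu(0)=0$. For finiteness, note $E[\omega]\ge 0$ on $\mathcal{X}_{\text{odd},+}$ forces $E[\omega_L]-E[\omega]\le E[\omega_L]$, so $F_\nu\le E[\omega_L]$ wherever the feasible set is nonempty; moreover $d$ is bounded on $\mathcal{A}$, since the impulse and $L^2$ constraints together with odd symmetry pin down $\nrm{\omega}_{\mathcal{X}}=2\pi+\sqrt{2\kappa_L}$ for every $\omega\in\mathcal{A}$, whence by the triangle inequality the feasible set is empty for all large $s$; there we simply extend $F_\nu$ by the constant $E[\omega_L]$ to keep it finite and non-decreasing.

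The one substantive point is strict positivity, $F_\nu(s)>0$ for every $s>0$, and this is exactly where Proposition~\ref{prop_cpt_lamb} enters. I would argue by contradiction: if $F_\nu(s_0)=0$ for some $s_0>0$, then there is a sequence $\{\omega_n\}\subset\mathcal{A}$ with $\nrm{\omega_n}_{L^1}\le\nu$ and $d(\omega_n)\ge s_0$ along which $E[\omega_n]\to E[\omega_L]$. This sequence meets precisely the hypotheses of Proposition~\ref{prop_cpt_lamb} (uniform $L^1$ bound and energy convergence to $E[\omega_L]$), so after passing to a subsequence there exist shifts $\tau_k$ with $\nrm{\omega_{n_k}(\cdot+\tau_k e_1)-\omega_L}_{\mathcal{X}}\to 0$, that is $d(\omega_{n_k})\to 0$, contradicting $d(\omega_{n_k})\ge s_0>0$. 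The main obstacle is conceptual rather than computational: one must check that the minimizing sequence obtained from assuming $F_\nu(s_0)=0$ genuinely carries the $L^1$ bound $\nu$ into the compactness statement, and that the translation invariance built into $d$ matches the $x_1$-translation supplied by Proposition~\ref{prop_cpt_lamb} --- this alignment is what makes the contradiction close and delivers $F_\nu(s)>0$ for $s>0$.
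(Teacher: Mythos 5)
Your proof is correct and takes essentially the same approach as the paper: you define $F_\nu(s)$ as the infimum of the energy deficit $E[\omg_L]-E[\omg]$ over the level set $\{\omg\in\mathcal{A}:\ \nrm{\omg}_{L^1}\le\nu,\ \inf_\tau\nrm{\omg_L-\omg(\cdot+\tau e_1)}_{\mathcal{X}}\ge s\}$, obtain monotonicity and $F_\nu(0)=0$ for free, and prove strict positivity by the identical contradiction argument via the compactness in Proposition~\ref{prop_cpt_lamb}. Your handling of finiteness (noting every $\omg\in\mathcal{A}$ has the same $\mathcal{X}$-norm, so the feasible set empties for large $s$, where you cap $F_\nu$ by $E[\omg_L]$) is in fact slightly more careful than the paper's, but the argument is otherwise the same.
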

	
	\begin{proof} 
		Let $\nu\geq\|\omg_L\|_{L^1}$.
		First we note that for $\omega \in \mathcal{A}$,
		$$
		\inf_{\tau\in\mathbb{R}}\nrm{\omg_{L} - \omg(\cdot +\tau  e_1)}_{\mathcal{X}} \leq
		\nrm{\omg_{L}}_{\mathcal{X}}+\nrm{\omg}_{\mathcal{X}}<\infty. 
		$$
		For $s\ge 0$, we denote the subclass   \begin{equation*}
			\begin{split}
				\mathcal{S}^{s,\nu} = \left\{ \omg \in \mathcal{A}: \inf_{\tau\in\mathbb{R}}\nrm{\omg_{L} - \omg(\cdot +\tau  e_1)}_{\mathcal{X}} \ge s\quad\mbox{and} 
				\quad \|\omg\|_{L^1}\leq\nu\right\}. 
			\end{split}
		\end{equation*} Then, we define \begin{equation}\label{def_F_nu}
			\begin{split}
				F_{\nu}(s) := \inf_{ \omg \in \mathcal{S}^{s,\nu} } \left( 	E[\omg_{L}] - E[\omg] \right) \ge 0. 
			\end{split}
		\end{equation} 
  Clearly, 
		$F_{\nu}(s)\leq E[\omg_L]<\infty$, 	
		$F_{\nu}(0)=0$ (due to $\omg_L\in \mathcal{S}^{0,\nu}$) and $F_{\nu}(\cdot_s)$ is monotonically increasing in $s\geq 0$, since $	\mathcal{S}^{s,\nu}$ is a smaller class of functions as $s$ increases.\\
		
		Now we show $F_{\nu}(s)>0$ for all $s>0$. 	
		Assume that there is some $s_{1}>0$  
		satisfying  $F_{\nu}(s_{1})=0$. This means that there is a sequence $\{ \omg_{n} \}$, belonging to the class $ \mathcal{S}^{s_1,\nu}\subset \mathcal{A}$, which verifies $E[\omg_{n}]\to E[\omg_{L}]$. However, thanks to the compactness (Proposition \ref{prop_cpt_lamb}),
		the energy convergence implies that there is a subsequence which converges strongly in $\mathcal{X}$ to $\omg_{L}$ after proper translations. This is a contradiction to $\inf_{\tau\in\mathbb{R}}\nrm{\omg_{L} - \omg(\cdot +\tau  e_1)}_{\mathcal{X}} \ge s_1>0$.
	\end{proof}
 
	\begin{remark}
		By scaling as in \eqref{defn_lamb_ab}, we have the following energy estimate 
		for any $\nu\geq\|\omg_L\|_{L^1}$ and 
		for any $\mu,\kappa\in(0,\infty)$: For any
		$\omega\in \mathcal{A}_{\mu,\kappa}$ with 
		$\|\omg\|_{L^1}\leq\sqrt{\mu/\pi}(\kappa/\kappa_L)^{1/4}\nu$,
		we have
		\begin{equation}\label{est_en_gen}
			\begin{split}
				E[\omg_{L}^{a,b}] - E[\omg] \ge 
				(\mu/\pi)\cdot\sqrt{(\kappa/\kappa_L)}		\cdot
				F_{\nu}\left(\left( \max{\{(\mu/\pi),\sqrt{(\kappa/\kappa_L)}\}	}\right)^{-1}\cdot \inf_{\tau\in\mathbb{R}}\nrm{\omg_{L}^{a,b}- \omg(\cdot +\tau  e_1)}_{\mathcal{X}} \right).
			\end{split}
		\end{equation} where
		$\omg_L^{a,b}$ is defined in 
		\eqref{defn_lamb_ab} with
		$a=a(\mu,\kappa), b=b(\mu,\kappa)$  defined in \eqref{formula_ab}.
		
	\end{remark}
	\subsection{Orbital stability on the first quadrant}
	
	For the dynamics of non-negative vorticities on the first quadrant 
	$Q$, 
	let us consider  \textit{nonnegative} vorticity $\rho$ in $Q$: $$\rho \in L^{1}\cap L^{\infty}(Q)\quad\mbox{satisfying}\quad \int_Q |x|\rho(x)\, dx<\infty.$$

	We denote ${\omg}$ the odd-odd extension of $\rho$ into $\mathbb{R}^2$. We note that the extension $ {\omg}$ consists of two opposite dipoles $\omega^{{r}}, \omega^{{l}}$
	in $\mathbb{R}^2$ in the sense that 
	\begin{equation}\label{defn_w1}
		{\omg}=\omega^{{r}}-\omega^{{l}}\quad\mbox{in}\quad\mathbb{R}^2,\quad\omega^{{r}}:= {\omg}\mathbf{1}_{\{x_1>0\}}\in \mathcal{X}_{\text{odd},+},\quad\mbox{and}\quad
		\omega^{{l}}:=- {\omg}\mathbf{1}_{\{x_1<0\}}\in \mathcal{X}_{\text{odd},+}.
	\end{equation}
	We observe the first dipole $\omg^{{r}}$ is supported in $\{x\in\mathbb{R}^2\,:\,x_1>0\}$, and the second one $\omg^{{l}}$ is the mirror image of the first one $\omg^{{r}}$ with respect to $x_2$-axis:
	$$
	\omg^{{l}}(x_1,x_2)=\omg^{{r}}(-x_1,x_2) \quad\mbox{for}\quad x=(x_1,x_2)\in\mathbb{R}^2.
	$$
	
	The conserved kinetic energy consists of two parts: \begin{equation*}
		\begin{split} 
			 E[ {\omg}]=  E[\omg^{{r}}]+E[\omg^{{l}}] -2E_{\text{inter}}[\omg]= 2(E[\omg^{{r}}] - E_{\text{inter}}[\omg]),
		\end{split}
	\end{equation*} where 
	the  interaction energy is defined by 
	\begin{equation*}
		\begin{split}
			E_{\text{inter}}[\omg] :&=-\frac{1}{2\pi}
			\iint_{\bbR^2 \times \bbR^2} \omg^{{r}}(x ) \big(\log{|x-y|}\big) \omg^{{l}}(y ) dxdy 	\\
			&=2\iint_{\bbR^2_+ \times \bbR^2_+} \omg^{{r}}(x) G_{\bbR^2_+}(x,y) \omg^{{l}}(y) dxdy \\
			&=2
			\int_{Q'}\int_{Q}  
			\rho(x_1,x_2) G_{\bbR^2_+}(x,y) \rho(-y_1,y_2) dxdy 	 >0\quad\mbox{when}\quad \rho\geq 0 \quad
\mbox{is non-trivial,} 
		\end{split}
	\end{equation*} 
	where $Q'$ is the second quadrant $\{x\in\mathbb{R}^2\,:\, x_1<0, x_2>0\}$.
	This \textit{positive} quantity $E_{\text{inter}}[\omg]$ is  the \textit{negative} contribution in energy of  $ \omg$ from interaction between two dipoles $\omg^{{r}}$ and $(-\omg^{{l}})$. \\

	We easily observe that if  two dipoles are far from each other, then 
	the interaction energy is negligible. More precisely, assuming that the  support of $\rho$ lies in  $\{x\in Q\,:\, x_1\geq d\}$ for   $d>0$,
	we have a simple estimate \begin{equation}\label{est_int_en}
		\begin{split}
			E_{\text{inter}}[\omg] & \le C\int_{Q'}\int_{Q}  \frac{x_2 y_2}{|x-y|^{2}} \rho(x_1,x_2)  \rho(-y_1,y_2) dxdy \\ & \le \frac{C}{4d^{2}}\int_{Q'}\int_{Q} x_2 y_2  \rho(x_1,x_2)  \rho(-y_1,y_2) dxdy = \frac{C\mu^{2}}{4d^{2}}
		\end{split}
	\end{equation} where \begin{equation*}
		\begin{split}
			\mu := \int_{Q} x_2\rho(x)dx=\frac{1}{2}\int_{\mathbb{R}^2} x_2\omg^{{r}}(x)dx=\frac{1}{4}\int_{\mathbb{R}^2} |x_2\omg(x)|dx\geq 0. 
		\end{split}
	\end{equation*}

	Recall that  $\omg_L$ is the (solo) Lamb dipole in \eqref{lamb}.  We call $\omg_{L,d}$  
	a pair of  two opposite Lamb dipoles, given by
	$$\omg_{L,d}(x_1,x_2)  :=\underbrace{\omg_{L}(x_1 - d,x_2)\mathbf{1}_{\{x_1>0\}}}_{(\omg_{L,d})^{{r}}}
	-\underbrace{\omg_{L}(-x_1 - d, x_2)\mathbf{1}_{\{x_1<0\}}}_{(\omg_{L,d})^{{l}}}$$ for some $d>0$. In the theorem below, we 
	prove stability     when 
	a  pair of two opposite \textit{general} dipoles is initially close to the pair $\omg_{L,d}$ of  two opposite \textit{Lamb} dipoles 
	for large distance $d\gg1$ under $L^1$-bound. 
	\begin{theorem}\label{thm_lamb_nu} 
		For each  
  $\nu\geq 2\|\omg_L\|_{L^1}$
  and for  each $\varep_0>0$, 
		there exist $\dlt=\dlt(\nu,\varep_0)>0$ and $d_{0}=d_{0}(\nu,\varep_0)\geq 1$ such that for all $d\ge d_{0}$, if  
		we consider any odd-odd symmetric 
		initial data $\omg_0\in \mathcal{X} 
		$ with $\omg_0\geq0$ in $Q$ satisfying
		\begin{equation}\label{cond_lamb}
			\nrm{\omg_0}_{L^1}\leq \nu \quad\mbox{and}\quad\nrm{\omg_0-\omg_{L,d}}_{\mathcal{X}}\leq \dlt,
		\end{equation} 
		then 
		there exists a   function $\tau(\cdot_t):[0,\infty)\to[1,\infty)$ such that 
		the corresponding Euler solution $\omg(\cdot_t)$ 
		satisfies, for any $t\geq0$, \begin{equation*}
			\begin{split}
				\nrm{\omg(t) - \omg_{L,\tau(t)}}_{\mathcal{X}} \leq \varep_0. 
			\end{split}
		\end{equation*}  
	\end{theorem}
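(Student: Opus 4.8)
The plan is to transfer the half-plane energy estimate of Proposition~\ref{prop:energy-estimate}, in its scaled form \eqref{est_en_gen}, to the right dipole $\omega^r(\cdot,t)$ along the flow, by controlling the two conserved/monotone quantities that pin down the relevant admissible class. First I record the structural facts. Writing $\omega(\cdot,t) = \omega^r(\cdot,t) - \omega^l(\cdot,t)$ as in \eqref{defn_w1}, the odd-odd symmetry forces the normal velocity to vanish on both coordinate axes, so each quadrant is invariant under the Yudovich flow and $\omega^r(\cdot,t)$ is the transport of $\omega^r_0$; in particular its sign is preserved ($\rho(\cdot,t) := \omega^r(\cdot,t)\mathbf{1}_Q \ge 0$) and all of its $L^p$ norms are conserved, so $\kappa(t) := \int_{\mathbb R^2_+}(\omega^r(\cdot,t))^2\,dx = \kappa(0) =: \kappa$. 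By Lemma~\ref{lemma_center} the quantity $\mu(t) := \int_{\mathbb R^2_+} x_2\,\omega^r(\cdot,t)\,dx = X_2(t)$ is nonincreasing. Finally, energy conservation together with the decomposition $E[\omega] = 2(E[\omega^r]-E_{\text{inter}}[\omega])$ gives $E[\omega^r(\cdot,t)] = \tfrac12 E[\omega_0] + E_{\text{inter}}(t)$, where $E_{\text{inter}}(t) > 0$ for all $t$ since $\rho(\cdot,t)\ge 0$ is nontrivial.

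The heart of the argument is to trap $\mu(t)$ near $\pi$ and to show the energy gap to the maximizer is uniformly small. Since $\omega^r(\cdot,t) \in \mathcal A_{\mu(t),\kappa}$, property~\ref{property5} and the scaling identity \eqref{iden_en_scaling} give the upper bound $E[\omega^r(\cdot,t)] \le E[\omega_L^{a(t),b(t)}] = (\mu(t)/\pi)\sqrt{\kappa/\kappa_L}\,E[\omega_L]$, with $a(t),b(t)$ from \eqref{formula_ab}. Combined with $E[\omega^r(\cdot,t)] \ge \tfrac12 E[\omega_0]$, this yields the lower bound $\mu(t) \ge \pi E[\omega_0]/(2\sqrt{\kappa/\kappa_L}\,E[\omega_L])$, while monotonicity gives $\mu(t) \le \mu(0)$. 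Using the hypotheses \eqref{cond_lamb}, the $\mathcal X$-closeness and the $L^1$-bound force $\kappa \to \kappa_L$ and $\mu(0)\to\pi$, while Lemma~\ref{lem_est_en_diff} (whose only small factor, $\|\omega_0-\omega_{L,d}\|_{L^2}^{1/2}$, is supplied by the $\mathcal X$-norm) together with \eqref{est_int_en} gives $E[\omega_0] = 2E[\omega_L] + O(\delta^{1/2}+d^{-2})$; hence both the upper and lower bounds on $\mu(t)$ converge to $\pi$ as $\delta\to 0,\ d\to\infty$, uniformly in $t$. Feeding this back, the nonnegative gap satisfies
\[
0 \le E[\omega_L^{a(t),b(t)}] - E[\omega^r(\cdot,t)] \le \big(E[\omega_L^{a(0),b(0)}]-E[\omega^r_0]\big) + E_{\text{inter}}(0),
\]
whose right-hand side is $o(1)$ uniformly in $t$ (the first term because $\omega^r_0$ is nearly the maximizer, the second by \eqref{est_int_en}).

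With $\mu(t) \approx \pi$, $\kappa \approx \kappa_L$, and $\|\omega^r(\cdot,t)\|_{L^1} = \|\omega^r_0\|_{L^1} \le \nu/2$ (so that the $L^1$ hypothesis of \eqref{est_en_gen} holds, using $\nu \ge 2\|\omega_L\|_{L^1}$), the scaled estimate \eqref{est_en_gen} applies with all prefactors comparable to $1$; since the gap is $o(1)$ and $F_\nu$ is monotone with $F_\nu(s) > 0$ for $s > 0$, I conclude $\inf_\tau \|\omega_L^{a(t),b(t)} - \omega^r(\cdot,t)(\cdot + \tau e_1)\|_{\mathcal X} = o(1)$ uniformly in $t$. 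A triangle inequality with $\|\omega_L^{a(t),b(t)} - \omega_L\|_{\mathcal X} = o(1)$ (from $(a(t),b(t))\to(1,1)$, the $\mathcal X$-analogue of \eqref{conv_L1}) then gives $\inf_\tau \|\omega_L - \omega^r(\cdot,t)(\cdot+\tau e_1)\|_{\mathcal X} \le \varepsilon_0/2$. Choosing $\tau(t)$ to be a near-minimizer, the fact that $\omega^r(\cdot,t)$ is supported in $\{x_1 > 0\}$ while $\omega_L(\cdot - \tau e_1)$ would carry a definite $L^2$ mass into $\{x_1 \le 0\}$ for $\tau < 1$ forces $\tau(t) \ge 1$, so that $(\omega_{L,\tau(t)})^r = \omega_L(\cdot - \tau(t)e_1)$ with no loss from the cutoff; the mirror symmetry finally upgrades the right-half estimate to $\|\omega(\cdot,t) - \omega_{L,\tau(t)}\|_{\mathcal X} \le 2\inf_\tau\|\omega_L - \omega^r(\cdot,t)(\cdot+\tau e_1)\|_{\mathcal X} \le \varepsilon_0$.

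The main obstacle is that $E[\omega^r(\cdot,t)]$ is not conserved and, unlike in the single-dipole problem, no maximizer exists for the quadrant itself; the uniform $o(1)$ control of the energy gap for all $t \ge 0$ hinges on the interplay between the positivity of $E_{\text{inter}}$, which bounds the gap from above, and the monotonicity of $\mu(t)$ from Lemma~\ref{lemma_center}, which simultaneously caps $\mu(t)$ from above (keeping the target maximizer $\omega_L^{a(t),b(t)}$ close to $\omega_L$) and, through the energy bound, pins it from below. A secondary technical point is verifying $\tau(t) \ge 1$, so that the conclusion matches the definition of $\omega_{L,\tau(t)}$.
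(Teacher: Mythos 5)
Your proposal follows essentially the same route as the paper's proof: the decomposition of the conserved energy into dipole plus interaction parts with $E_{\text{inter}}>0$, the resulting bound $E[\omg^r(t)]\ge \tfrac12 E[\omg_0]$, Lemma \ref{lem_est_en_diff} and \eqref{est_int_en} to make the $t=0$ errors $O(\dlt^{1/2}+d^{-2})$, Lemma \ref{lemma_center} together with the maximizing property and \eqref{iden_en_scaling} to trap $\mu(t)$ near $\pi$, the scaled energy estimate \eqref{est_en_gen} with $F_\nu$, the triangle inequality replacing $\omg_L^{a_t,b_t}$ by $\omg_L$, and the mirror-symmetry doubling at the end. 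These steps are sound; the one loose attribution is that the smallness of $E_{\text{inter}}[\omg_0]$ does not follow \emph{directly} from \eqref{est_int_en}, whose hypothesis is support in $\{x_1\ge d\}$, which $\omg_0$ need not satisfy --- one must compare with $\omg_{L,d}$ through Lemma \ref{lem_est_en_diff}, as the paper does in \eqref{est_en_diff3}; your own derivation of $E[\omg_0]=2E[\omg_L]+O(\dlt^{1/2}+d^{-2})$ in fact contains exactly this, so it is a matter of wording rather than substance.

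The genuine gap is the claim that the support of $\omg^r(\cdot,t)$ in $\{x_1>0\}$ ``forces $\tau(t)\ge 1$'' because $\omg_L(\cdot-\tau e_1)$ ``would carry a definite $L^2$ mass into $\{x_1\le 0\}$ for $\tau<1$''. That mass tends to $0$ as $\tau\uparrow 1$, so it is not definite: no smallness of $\varep_0$ can rule out a near-minimizer such as $\tau(t)=1-10^{-100}$, and the theorem (and its use in the proof of Theorem \ref{thm:main0}) genuinely requires a shift in $[1,\infty)$ so that $\omg_{L,\tau(t)}$ suffers no loss from the cutoff. What the mass comparison actually yields is the paper's two-step argument: (i) fix $\beta_0=\beta_0(\varep_0)$ \emph{before} all other parameters so that $\nrm{\omg_{L,1}-\omg_{L,\tau}}_{\mathcal{X}}\le \varep_0/2$ for all $\tau\in[1-\beta_0,1]$, cf.\ \eqref{defn_alp0}; (ii) use the monotone function $h(s)=\nrm{\omg_L(\cdot-se_1)}_{L^2(Q)}$ of \eqref{defn_h}, the conservation of $\nrm{\omg(t)^r}_{L^2(Q)}=\sqrt{\kappa_0}\approx\sqrt{\kappa_L}$, and the choice \eqref{h_small} to conclude only that $\tau(t)\ge 1-\beta_0$; (iii) whenever $\tau(t)\in[1-\beta_0,1)$, \emph{redefine} $\tau(t):=1$ and absorb the extra error via (i). Without this rounding step your argument does not produce $\tau(t)\ge1$; with it (and the corresponding re-splitting of the $\varep_0$ budget between the near-minimizer error and the rounding error, since the final mirror-symmetry doubling multiplies the right-half estimate by $2$) your proof closes and coincides with the paper's.
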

	\begin{remark}\label{rem_general_dip}
	 
    There are several other dipoles whose existence is obtained by a variational argument  (e.g. dipoles in  \cite{  BNL13, Burton.2021,   CQZZ-stab} and references therein). 
    In general,   their  stability is shown not for a single dipole (up to $x_1$-translation), but for the set of maximizers (i.e. the orbital sense), since it mostly remains open to verify uniqueness (up to $x_1$-translation) of a maximizer for a given variational setting except for very few cases (e.g.   the Lamb dipole \cite{AC2019, Wang.2024}). For the case without uniqueness, one may prove a similar stability result such as our Theorem  \ref{thm_lamb_nu} (and Theorem \ref{thm:main0}) for the pairs of opposite dipoles that are energy maximizers.
	\end{remark}
	\begin{proof} 
		
		Let $\varep_0>0$ and $\nu\geq 2\|\omg_L\|_{L^1}$.   
		We first take $\beta_0\in(0,1)$ such that
		for any $\tau\in[1-\beta_0,1]$,
		\begin{equation}\label{defn_alp0}
			\|\omega_{L,1}-\omg_{L,\tau}\|_{\mathcal{X}}\leq \frac{\varep_0}{2}.
		\end{equation}
		Denote \begin{equation}\label{defn_h}
			h(s):=\|\omg_L(\cdot-se_1) 
   \|_{L^2(Q)},\quad s\in\mathbb{R}.
		\end{equation}
		Then $h$ is continuous,
		$h(-1)=0,\, h(1)=\sqrt{\kappa_L}$, and it is strictly increasing on $[-1,1]$. Now we take
		sufficiently small $\varep\in(0,\varep_0)$ satisfying
		\begin{equation}\label{h_small}
			\sqrt{\kappa_L}-\varep\geq h(1-\beta_0).    
		\end{equation}
 
  	Then, we take $\theta=\theta(\varep,\nu):=F_\nu(\varep/4)>0$ 
   where $F_\nu$ is the function given in 
   Proposition \ref{prop:energy-estimate}. By monotonicity of the function $F_\nu$, it  guarantees		for any $s\geq0$,
		\begin{equation}\label{cont_inv2}
			F_\nu(s)<\theta\quad\implies \quad s < \frac{\varep}{4}.
		\end{equation} 
		Next we take two
		sufficiently small constants $$\alpha, \gamma\in\left(0,\frac{1}{8}\right)$$ by \eqref{conv_L1} 
		and by \eqref{est_en_diff} of Lemma \ref{lem_est_en_diff}
		such that
		whenever $\mu\in [(1-\alpha)\pi, (1+\alpha)\pi]$ and 
		$\kappa\in [(1-\gamma)\kappa_L, (1+\gamma)\kappa_L]$,  we get 
		\begin{equation}\label{est_L1}
			\nrm{\omg_{L}^{a,b} -\omg_{L}}_{L^1(\mathbb{R}^2)}\leq \frac{\varep}{7} \quad\mbox{and}\quad
			|E[\omg_{L}^{a,b}]-E[\omg_{L}]|
			\leq \frac{\theta}{24}
		\end{equation}
		for any $a=a(\mu,\kappa), b=b(\mu,\kappa)$   defined in  \eqref{formula_ab}. 
		We may assume $\alpha>0$ is small enough to satisfy
		\begin{equation}\label{est_alp_theta}
			\frac{3\alpha}{8}E[\omg_L]\leq \frac{\theta}{24}.
		\end{equation}
		Next, we choose 
		$d_0\geq 2$ large enough so that
		for any $d\geq d_0$,
		\begin{equation}\label{cond_large_d}
			E_{\text{inter}}[\omg_{L,d}]\leq \min\left\{\frac{\alpha}{4}  E[\omega_L],\, 
				\frac{\theta}{24}
				 \right\}
		\end{equation}
		thanks to the estimate \eqref{est_int_en}.\\
		
		We recall the notation  $\omg^{{r}}:=\omg\mathbf{1}_{\{x_1>0\}}$
		for any odd-odd
		symmetric $\omg$
		defined in \eqref{defn_w1}. By assuming $\dlt<1$, 		we also estimate, by the estimate \eqref{est_en_diff} of Lemma \ref{lem_est_en_diff} and by the assumption \eqref{cond_lamb},
		\begin{equation}\label{est_en_diff2}\begin{split}
				&|E[\omg_L]-E[\omg_0^{{r}}]|=
				|E[\omg_{L,d}\mathbf{1}_{x_1>0}]-E[\omg_0\mathbf{1}_{x_1>0}]|\\
				&\leq C\nrm{\omg_{L,d}-\omg_0}_{L^1(\{x_1>0\})}^{1/2}\cdot
				\nrm{\omg_{L,d}-\omg_0}_{L^2(\{x_1>0\})}^{1/2}\cdot
				\nrm{x_2(\omg_{L,d}+\omg_0)}_{L^1(\{x_1>0\})}^{1/2}\cdot
				\nrm{\omg_{L,d}+\omg_0}_{L^1(\{x_1>0\})}^{1/2}\\
				&\leq C(\nrm{\omg_{L,d}}_{L^1(\mathbb{R}^2)}+\nrm{\omg_0}_{L^1(\mathbb{R}^2)})\cdot
				\nrm{\omg_{L,d}-\omg_0}_{L^2(\mathbb{R}^2)}^{1/2}\cdot
				(\nrm{x_2(\omg_0-\omg_{L,d})}_{L^1(\mathbb{R}^2)}
				+2\nrm{x_2 \omg_{L,d}}_{L^1(\mathbb{R}^2)})^{1/2}\\
				&\leq C(1+\nu) \cdot
				\nrm{\omg_{L,d}-\omg_0}_{{\mathcal{X}}}^{1/2}\cdot
				(\nrm{\omg_{L,d}-\omg_0}_{{\mathcal{X}}}+1)^{1/2} 
				\leq C'(1+\nu) \cdot
				\dlt^{1/2},
		\end{split}\end{equation} where $C'>0$ is an absolute constant. Similarly,  we estimate
		\begin{equation*}\begin{split}
				&|E[\omg_{L,d}]-E[\omg_{0}]| 
				\leq 4C'(1+\nu) \cdot
				\dlt^{1/2}.
		\end{split}\end{equation*} 
		By observing
		\begin{equation*}
			\begin{split}
				2(E [\omg_{L}] - E_{\text{inter}}[\omg_{L,d}]) = E[\omg_{L,d}] 
				\quad\mbox{and}\quad  		2(E [\omg_0^{{r}}] - E_{\text{inter}}[\omg_{0}]) = E[\omg_0] ,
			\end{split}
		\end{equation*} 
		we have 	
		\begin{equation}\label{est_en_diff3}
			\begin{split}
				|  E_{\text{inter}}[\omg_{L,d}] - E_{\text{inter}}[\omg_{0}] |\leq 3C'(1+\nu) \cdot
				\dlt^{1/2}.
			\end{split}
		\end{equation} 
		
		We denote the constants
		$$
		\mu_0:=\int_Q x_2\omega_0\,dx  \quad \mbox{and}\quad\kappa_0:= \int_Q\omega_0^2\,dx,
		$$ 
		where the latter quantity is  preserved in time:
		$$ 
		\int_Q\omega(t)^2\,dx=\int_Q\omega_0^2\,dx.
		$$
		We also note 
		\begin{equation*}
			\begin{split}
				\mu(t):=
				\int_{Q} x_2 \omg(t) dx 
				=\frac{1}{2}\int_{\bbR^2  } x_2 \omg(t)^{{r}} dx  
				=\frac{1}{4}\int_{\bbR^2  } |x_2 \omg(t)| dx  
			\end{split}
		\end{equation*} 
		is   decreasing in time
		by Lemma \ref{lemma_center}.	
		By recalling
		$$
		\int_Q x_2\omega_{L,d}\,dx=\pi  \quad \mbox{and}\quad  \int_Q\omega_{L,d}^2\,dx=\kappa_L>0,$$ the condition \eqref{cond_lamb} implies \begin{equation}\label{est_L2_diff}\begin{split}
				|\mu_0-\pi|\leq \dlt\quad\mbox{and}\quad & |\sqrt{\kappa_0}-\sqrt{\kappa_L}|\leq \dlt.
		\end{split}\end{equation}
		Now we take  sufficiently small $\dlt\in(0,\min\{1,\varep/2\})$ satisfying
		\begin{equation}\label{cond_dlt}\begin{split}
				&    C'(1+\nu) \cdot
				\dlt^{1/2}\leq \frac{\alpha}{24}\cdot E[\omg_L],\quad\quad 
				(1-\alpha)\pi\leq\mu_0\leq (1+\alpha)\pi,\\ &
				\quad\mbox{and}\quad 
				\max\left\{\frac{1}{\sqrt{1+\gamma}}, \,\frac{1-\alpha}{1-(5/8)\alpha} \right\}\leq \sqrt{\kappa_L/\kappa_0}\leq \frac{1}{\sqrt{1-\gamma}}.
		\end{split} \end{equation}
		These conditions guarantee, by \eqref{est_en_diff2} and \eqref{est_en_diff3},
		\begin{equation}\begin{split}\label{est_en_diff4}
				&| E[\omg_L]-E[\omg_0^{{r}}]| \leq\frac{\alpha}{4}\cdot E[\omg_L],\quad \quad 
				|  E_{\text{inter}}[\omg_{L,d}] - E_{\text{inter}}[\omg_{0}] |\leq \frac{\alpha}{8}\cdot E[\omg_L],\\
				&	\quad\mbox{and}\quad
				\kappa_0\in [(1-\gamma)\kappa_L, (1+\gamma)\kappa_L].
		\end{split}  \end{equation}

		Let   $d\geq d_0$ and $\omg(\cdot_t)$ be the (odd-odd symmetric) solution (in $\mathbb{R}^2$) for the initial data $\omg_0$ for $t\geq 0$.
		Then we recall\begin{equation*}
			\begin{split}
				2(E [\omg_0^{{r}}] - E_{\text{inter}}[\omg_{0}]) = E[\omg_0] 
			\end{split}
		\end{equation*} and \begin{equation*}
			\begin{split}
				E[\omg_0] = E[\omg(t)] = 2(E[\omg(t)^{{r}}] - E_{\text{inter}}[\omg(t)]) \le  2 E[\omg(t)^{{r}}].
			\end{split}
		\end{equation*} 
		Therefore we have
		$$
		E[\omg_0^{{r}}]-E_{\text{inter}} [\omg_{0}]  \leq E[\omg(t)^{{r}}] 
		$$	which implies, by \eqref{est_en_diff4},	
		\begin{equation}\label{est_en_evol}
			E[\omg_{L}]-E_{\text{inter}} [\omg_{L,d}]  \leq E[\omg(t)^{{r}}] +\frac{3\alpha}{8}\cdot E[\omg_L].
		\end{equation}		
		Thus we get, by \eqref{cond_large_d},
		\begin{equation}\label{est_en_diff5}
			\begin{split}
				(1-\frac{5\alpha}{8})\cdot E[\omg_{L}]   \leq E[\omg(t)^{{r}}].
			\end{split}
		\end{equation} 
		
		On the other hand, 
		since we know 
		$$
		\omega(t)^{{r}}\in \mathcal{A}_{\mu(t),\kappa_0},
		$$ we have the estimate:	
		$$
		E[\omg(t)^{{r}}] \leq E[\omg_L^{a_t,b_t}]=\frac{\mu(t)}{\pi}\cdot\sqrt{\kappa_0/\kappa_L} \cdot E[\omg_L],
		$$ where $a_t:=a(\mu(t),\kappa_0), b_t:=b(\mu(t),\kappa_0)$ (see the definition of 
		$a(\cdot,\cdot), \,b(\cdot,\cdot)$	
		in \eqref{formula_ab}) and where the last identity is due to  \eqref{iden_en_scaling}.	
		Thus, by combining the above estimate with \eqref{est_en_diff5}, we get
		$$
		(1-\frac{5\alpha}{8})\cdot E[\omg_{L}] \leq \frac{\mu(t)}{\pi}\cdot\sqrt{\kappa_0/\kappa_L} \cdot E[\omg_L],
		$$ which implies
		$$
		\pi\sqrt{\kappa_L/\kappa_0} \cdot (1-\frac{5\alpha}{8}) \leq  \mu(t) \leq \mu_0  \quad \mbox{for any}\quad  t>0,
		$$ where  the  last inequality is due to the   monotonicity of 
		$\mu(t)$  by Lemma \ref{lemma_center}. The condition \eqref{cond_dlt} gives
		$$ 
		(1-\alpha)\pi \leq  \mu(t) \leq (1+\alpha)\pi \quad \mbox{for any}\quad  t>0,
		$$ 
		Now we estimate, by  \eqref{est_L1}, \eqref{est_en_evol}, \eqref{cond_large_d}, \eqref{est_alp_theta} and \eqref{est_en_diff4},
		$$ 
		E[\omg_{L}^{a_t,b_t}]- E[\omg(t)^{{r}}] \leq E[\omg_{L}]+\frac{\theta}{24}- E[\omg(t)^{{r}}] 
		\leq E_{\text{inter}}[\omg_{L,d}]	+\frac{3\alpha}{8}\cdot E[\omg_L]+\frac{\theta}{24}
		\leq \frac{\theta}{8} 
		\leq\frac{\mu(t)}{\pi}\cdot
		\sqrt{(\kappa_0/\kappa_L)}		
		\cdot \theta. 
		$$ 
		On the other hand, we have
		$$
		\nrm{\omg(t)^{{r}}}_{L^1}=\nrm{\omg_0^{{r}}}_{L^1}=\frac{1}{2}\nrm{\omg_0}_{L^1}\leq \frac{1}{2}\nu\leq \sqrt{(1-\alpha)}\cdot\left(\frac{\kappa_0}{\kappa_L}\right)^{1/4}\cdot \nu
		\leq \sqrt{\frac{\mu(t)}{\pi}}\cdot\left(\frac{\kappa_0}{\kappa_L}\right)^{1/4}\cdot \nu.
		$$
		Now we are ready to use the energy estimate \eqref{est_en_gen} to get
		$$
		F_{\nu}\left(
		\left( \max{\{(\mu(t)/\pi),\sqrt{(\kappa_0/\kappa_L)}\}	}\right)^{-1}\cdot
		\inf_{\tau\in\mathbb{R}}\nrm{\omg_{L}^{a_t,b_t}(\cdot -\tau  e_1) - \omg(t)^{{r}}}_{\mathcal{X}} \right)\leq \theta.
		$$ Hence, by \eqref{cont_inv2}, we get 
		$$
		\inf_{\tau\in\mathbb{R}}\nrm{\omg_{L}^{a_t,b_t}(\cdot -\tau  e_1)- \omg(t)^{{r}}}_{\mathcal{X}}  \leq   \max{\{(\mu(t)/\pi),\sqrt{(\kappa_0/\kappa_L)}\}	} \cdot
		\frac{\varep}{4} \leq    \max{\left\{(1+\alpha),\frac{1-(5/8)\alpha}{1-\alpha}\right\}	} \cdot
		\frac{\varep}{4} \leq  \frac{2\varep}{7}.
		$$
		By \eqref{est_L1}, we obtain
		$$ 
		\inf_{\tau\in\mathbb{R}}\nrm{\omg_{L}(\cdot -\tau  e_1) - \omg(t)^{{r}}}_{\mathcal{X}} \leq
		\left(\frac{1}{7}+\frac{2}{7}\right)  \varep
		\leq \frac{3\varep}{7}.
		$$
		Since $\omg(t)^{{r}}:=\omg(t)\mathbf{1}_{\{x_1>0\}}$, we know
		$$
		\nrm{\omg_{L}(\cdot -\tau(t)  e_1)\mathbf{1}_{\{x_1>0\}}  - \omg(t)^{{r}}}_{\mathcal{X}}   \leq \nrm{\omg_{L}(\cdot -\tau(t)  e_1)   - \omg(t)^{{r}}}_{\mathcal{X}}.
		$$ Then we take any $\tau(t)\in\mathbb{R}$ for each $t\geq 0$  satisfying
		\begin{equation}\label{tau_ep_2}
			\nrm{\omg_{L}(\cdot -\tau(t)  e_1)\mathbf{1}_{\{x_1>0\}} - \omg(t)^{{r}}}_{\mathcal{X}}   \leq \frac{ \varep}{2}\leq \frac{ \varep_0}{2}. 
		\end{equation}

		It only remains to verify that $\tau(t)$  satisfies $\tau(t)\geq 1$. 
		Indeed, by recalling \eqref{defn_h} and \eqref{est_L2_diff}, we observe 
		$$
  h(\tau(t))=\nrm{\omg_{L}(\cdot -\tau(t)  e_1) 
  }_{L^2(Q)} \geq \nrm{\omg(t)^{{r}}}_{L^2(Q)}  - \frac{ \varep}{2}
		= \sqrt{\kappa_0}  - \frac{ \varep}{2}\geq   \sqrt{\kappa_L} -\delta - \frac{ \varep}{2}\geq   \sqrt{\kappa_L} -\varep.
		$$  
		Thus, by \eqref{h_small}, we get
		$$\tau(t)\geq 1-\beta_0.$$
		Lastly, if $1-\beta_0\leq\tau(t)<1$, then we observe, by \eqref{tau_ep_2},
		\begin{equation*}
			\nrm{\omg_{L}(\cdot -  e_1)  - \omg(t)^{{r}}}_{\mathcal{X}}   \leq 
			\nrm{\omg_{L}(\cdot -  e_1)  - \omg_{L}(\cdot -  \tau(t) e_1)\mathbf{1}_{\{x_1>0\}}}_{\mathcal{X}}   
			+\frac{ \varep_0}{2}\leq \frac{\varep_0}{2}+\frac{\varep_0}{2}\leq \varep_0,
		\end{equation*} where the second inequality is due to \eqref{defn_alp0}.
		Thus, whenever $1-\beta_0\leq\tau(t)<1$, we simply redefine $\tau(t)=1$ which finishes the proof. \end{proof}

	\subsection{Proof of Theorem \ref{thm:main0}}
	
	Now we are  ready to prove Theorem \ref{thm:main0} by using Theorem \ref{thm_lamb_nu}.
	
	\begin{proof}[Proof of Theorem \ref{thm:main0}]
		
		First we take the constant $\nu_L:= 
  \|\omg_L\|_{L^1} 
  $, fix
		$\nu:=2\nu_L+1$, 	 
		and let $\varep>0$. We simply fix
		$\varep_0:=\varep/16$ and  take
		the constants $$\delta=\delta(\nu,\varep_0)>0\quad\mbox{and}\quad
		d_0=d_0(\nu,\varep_0)\geq 1$$ from Theorem \ref{thm_lamb_nu}. We may assume $\delta<1$ and $\delta<3\varep/32$. \\
		
		Now we consider  
		any odd-odd symmetric initial data $\omg_0\in \mathcal{X}$ with $\omg_0\geq0$ in $Q$ satisfying
		\eqref{cond_lamb_prime}.
		Since
		$$\|\omega_0\|_{L^1}\leq \|\omega_L\|_{L^1}+\delta\leq \nu_L+1\leq\nu,$$
		the initial data satisfies all the conditions \eqref{cond_lamb} of Theorem \ref{thm_lamb_nu}. 
		Thus, by the theorem,
		there exists a   function $\tau(\cdot_t):[0,\infty)\to[1,\infty)$ such that 
		the corresponding   solution $\omg(\cdot_t)$  
		satisfies, for any $t\geq0$, \begin{equation*}
			\begin{split}
				\nrm{\omg(t) - \omg_{L,\tau(t)}}_{\mathcal{X}} \leq \varep_0=\frac{\varep}{16}. 
			\end{split}
		\end{equation*}  
		
		Let $t\geq 0$ be fixed.   It remains to show
		$$  \nrm{\omg(t) - \omg_{L,\tau(t)}}_{L^1} \leq \frac{15\varep}{16}$$ or equivalently,
		$$  \nrm{\omg(t)^{{r}} - \omg_{L}(\cdot-\tau(t) e_{x_1})}_{L^1(\{x_1>0\})} \leq \frac{15\varep}{32}.$$
		We recall
		$\supp[\omega_L(\cdot-\tau(t) e_{x_1})=\overline{D^{\tau(t)}} $ where
		$$ {D^{\tau(t)}}:=\{x\in\mathbb{R}^2:\, |x-\tau(t) e_{x_1}|<1\}\subset \{x_1>0\}.$$ Thus we have
		\begin{align*}
			& \nrm{\omg(t)^{{r}} - \omg_{L}(\cdot-{\tau(t)} e_{x_1})}_{L^1(\{x_1>0\})} 
			\\&\quad\quad\leq \|\omega(t)^{{r}} -\omega_L(\cdot-{\tau(t)} e_{x_1})\mathbf{1}_{\{x_1>0\}}\|_{L^1(D^{\tau(t)})}+\|\omega(t)^{{r}} \|_{L^1(\mathbb{R}^2\setminus D^{\tau(t)})}  =:(I)+(II).
		\end{align*}
		For (I), we know
		$$
		(I)=\|\omega(t)-\omega_L(\cdot-{\tau(t)} e_{x_1})\|_{L^1(D^{\tau(t)}\cap \{x_1>0\})}
		\leq  \sqrt{|D^{\tau(t)}|}\cdot \|\omega(t)-\omega_L(\cdot-{{\tau(t)} e_{x_1}})\|_{L^2(D^{\tau(t)})}
		\leq  \sqrt{\pi}\cdot \frac{\varep}{16}.
		$$ 
		For (II), we know 
		\begin{align*}
			(II)=& \|\omega(t)^{{r}} \|_{L^1(\mathbb{R}^2\setminus D^{\tau(t)})}= \|\omega(t)^{{r}} \|_{L^1(\mathbb{R}^2 )}
			- \|\omega(t)^{{r}} \|_{L^1( D^{\tau(t)})}= \|\omega_0^{{r}} \|_{L^1(\mathbb{R}^2 )}
			- \|\omega(t)^{{r}} \|_{L^1( D^{\tau(t)})}
		\end{align*}  by conservation of $L^1$-norm.
		Then, we compute
		\begin{align*}
			(II)
			&\leq  \|\omega_0-\omega_{L,d} \|_{L^1(\{x_1>0\} )}+\|\omega_{L,d}\|_{L^1(\{x_1>0\} )}
			- \|\omega(t)^{{r}} \|_{L^1( D^{\tau(t)})}\\ 
			& \leq  \delta+\|\omega_L(\cdot-{{\tau(t)} e_{x_1}})\|_{L^1(D^{\tau(t)}\cap\{x_1>0\} )}
			- \|\omega(t) \|_{L^1( D^{\tau(t)}\cap\{x_1>0\})}\\ 
			&\leq  \delta  +\|\omega(t)-\omega_L(\cdot-{{\tau(t)} e_{x_1}})\|_{L^1(D^{\tau(t)}\cap\{x_1>0\})} \leq   \delta  +  \sqrt{\pi}\cdot\frac{\varep}{16}.
		\end{align*} 
		Lastly, due to $\delta<3\varepsilon/32$,  we are done.
	\end{proof}
	
	\begin{remark}
		It is natural to expect the shift position $\tau(t)$ has a similar speed  of one solo Lamb dipole $\omg_L$. For instance, for each $M>0$, one may prove,
		for $\|\omg\|_{L^\infty}\leq M$,
		$$\tau(t)\sim_M(d+t)$$ for sufficiently small $\varep>0$ 
		by following the same spirit of  \cite{CJ-Lamb} which proved dynamic stability of a ``solo'' Lamb dipole. 
	\end{remark}

	\section{Orbital stability for concentrated vortices}
	\label{sec:radial}
	In this section, we aim to prove Theorem~\ref{thm:main1}. Throughout this section, we assume the initial condition $\omega_0$ of \eqref{eq:Euler} is odd-odd in $\mathbb{R}^2$, and $\rho_0:=\omega_0\mathbf{1}_Q$ is non-negative, bounded, and satisfies $\int_{\mathbb{R}^2}\rho_0 dx = 1$. It is easy to check that $\omega(\cdot,t)$ stays odd-odd for all times, and let us define $\rho(\cdot,t) := \omega(\cdot,t) \mathbf{1}_{Q}$.

	\subsection{Decomposition of the kinetic energy functional}
	Like in Section~\ref{sec:dipole}, the conservation of kinetic energy also play a crucial role here. 
	Due to the odd-odd symmetry of $\omega$ and the definition of $\rho(\cdot,t)=\omega(\cdot,t)\mathbf{1}_{Q}$, we can rewrite the kinetic energy in \eqref{def_E} as
	\[
 \begin{split}
	E[\omega(t)] &=-\frac{1}{2\pi} \iint_{\mathbb{R}^2\times\mathbb{R}^2} \omega(x,t) \omega(y,t) \log|x-y| dydx\\
    &= \frac{2}{\pi} \iint_{\mathbb{R}^2\times\mathbb{R}^2} \rho(x,t) \omega(y,t) (-\log|x-y|) dydx\\
    &= \frac{2}{\pi} \iint_{\mathbb{R}^2\times\mathbb{R}^2} \rho(x,t) \rho(y,t)  K(x,y) dydx \\
    &=: \frac{2}{\pi}\tld E[\rho(t)],
    \end{split}
	\]
	where
	\[
	K(x,y) := \underbrace{(-\log |x-y|)}_{=: K_1(x-y)} +  \underbrace{\log |x-\tld y|}_{=: K_2(x,y)} +  \underbrace{(-\log |x+y|)}_{=:K_3(x,y)} + \underbrace{ \log |x-\bar y|}_{=:K_4(x,y)}. 
	\]
	The 4 terms $K_1,\dots, K_4$ come from the contributions from the four quadrants respectively, where $\tld y := (-y_1,y_2)$ and $\bar y = (y_1,-y_2)$. Such decomposition of $K$ allows us to decompose $\tld E[\rho(t)]$ into the sum
	\[
	\tld E[\rho(t)] =: E_1[\rho(t)] + E_2[\rho(t)]+ E_3[\rho(t)] + E_4[\rho(t)].
	\]
 
	\medskip
	In this subsection, we will use two different viewpoints to decompose $\tld E[\rho(t)]$ into two parts, and obtain various estimates. In the proof, we will go back and forth between these two viewpoints.

	\medskip
	\noindent\textbf{Viewpoint 1: }Decomposing $\tld E[\rho(t)]$ into ``self-interaction'' and ``others''.
	
	\medskip
	\noindent  \textbf{$\circ$ Contribution from self-interaction:}
	\smallskip
	
	Note that $E_1[\rho(t)]$ is contributed by the ``self-interaction'' of vorticity in the first quadrant, and the kernel $K_1$ only depends on $x-y$. 
	
	We introduce the following notation for such type of energy: for any potential $W \in L_{loc}^1(\mathbb{R}^2)$, we define the interaction energy of $\rho$ (with interaction potential $W$) as
	\begin{equation}\label{inter}
		\mathcal{E}_W[\rho] := \iint_{\mathbb{R}^2\times\mathbb{R}^2} \rho(x)\rho(y)W(x-y)dxdy.
	\end{equation}
	With this notation, $E_1[\rho(t)]$ can also be written as $\mathcal{E}_{K_1}[\rho(t)]$. In particular, since $K_1=-\log(|\cdot|)$ is a radially decreasing function, for $\rho_0 \in L^1_+(\mathbb{R}^2)$ (the set of non-negative $L^1$ functions), Riesz rearrangement inequality (see \cite[Sec 3.7]{LL} for example) gives
	\[
	\mathcal{E}_{K_1}[\rho(t)] \leq \mathcal{E}_{K_1}[\rho(t)^*] =  \mathcal{E}_{K_1}[\rho^*_0] \quad \text{ for all }t\in\mathbb{R},
	\]
	where the equality is due to $\rho(\cdot,t) \in L^1_+(\mathbb{R}^2)$ has the same distribution function as $\rho_0$ (thus $\rho(\cdot,t)^* = \rho_0^*$), since $\rho$ is transported by a divergence-free vector field.
	
	The Riesz rearrangement inequality above can be upgraded into a quantitative version (also called ``stability estimate'') $\mathcal{E}_{K_1}[\rho^*] - \mathcal{E}_{K_1}[\rho] \geq d(\rho,\rho^*)$, where $d(\rho,\rho^*)\geq 0$ measures the ``asymmetry'' of $\rho$ in some sense. Such stability estimates were obtained in \cite{FP, FL, BC} when $\rho$ is a characteristic function, and in \cite{YY} for a general non-negative density. Below we state the theorem in \cite{YY} (we only state the result in dimension 2 for our application), which will be used later in the proof.
	\begin{theorem}[Theorem 1.1 of \cite{YY}]
		\label{thm_yy}
		Consider a radial interaction potential $W \in C^1(\mathbb{R}^2\setminus\{0\})\cap L_{loc}^1(\mathbb{R}^2)$. Let $w:\mathbb{R}^+\to\mathbb{R}$ be such that $w(|x|)=W(x)$. Assume $w'(r)<0$ for $r>0$, and there exists some $c>0$ such that $w'(r)\leq -cr$ for $r\in(0,1)$.
		
		For all $\rho \in L^1_+(\mathbb{R}^2) \cap L^\infty(\mathbb{R}^2)$ that satisfies $\supp\rho^* \subset B(0,R_*)$ with $R_*$ being finite, we have the following stability estimate for the interaction energy \eqref{inter}:
		\[
		\mathcal{E}_W[\rho^*] - \mathcal{E}_W[\rho] \geq c(W,R_*) \|\rho\|_{L^1}^{3} \|\rho\|_{L^\infty}^{-1} \delta(\rho)^2
		\]
		for some strictly positive constant\footnote{More precisely, in the proof of \cite[Theorem 1.1]{YY}, the constant $c(W,R_*)$ is explicitly given by
			$
			c(W,R_*) := \inf_{r\in(0,20R_*)} -\frac{w'(r)}{r}.
			$ We will not use it in this paper.
		} $c(W,R_*)$, where 
		\[
		\delta(\rho) := \inf_{a\in\mathbb{R}^2} \frac{\|\rho-\rho^*(\cdot-a)\|_{L^1(\mathbb{R}^2)}}{2\|\rho\|_{L^1(\mathbb{R}^2)}}.
		\]
	\end{theorem}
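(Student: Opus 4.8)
The plan is to reduce the general-density statement to the already-known patch (characteristic-function) case of \cite{FP,FL,BC} via a layer-cake decomposition, and then to recombine. After translating so that the infimum defining $\delta(\rho)$ is attained at $a=0$, I would write $\rho=\int_0^{\|\rho\|_{L^\infty}}\mathbf 1_{U_s}\,ds$ with the nested superlevel sets $U_s:=\{\rho>s\}$, and let $U_s^*$ be the centered ball with $|U_s^*|=|U_s|$, so that $\rho^*=\int_0^{\|\rho\|_{L^\infty}}\mathbf 1_{U_s^*}\,ds$. Expanding the bilinear form gives
\begin{equation*}
\mathcal E_W[\rho^*]-\mathcal E_W[\rho]=\int_0^{\|\rho\|_{L^\infty}}\!\!\int_0^{\|\rho\|_{L^\infty}}\bigl(I(U_s^*,U_t^*)-I(U_s,U_t)\bigr)\,ds\,dt,
\end{equation*}
where $I(A,B):=\iint \mathbf 1_A(x)\mathbf 1_B(y)W(x-y)\,dx\,dy$. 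Since balls maximize $I$ at fixed measures (Riesz), every integrand is nonnegative, so no cancellation occurs and it suffices to bound the double integral from below.

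The heart of the argument is a quantitative two-set Riesz estimate upgrading the patch results: for nested sets one expects
\begin{equation*}
I(A^*,B^*)-I(A,B)\;\gtrsim\; c(W,R_*)\,\bigl(\text{size factor}\bigr)\cdot\bigl|A\triangle A^*\bigr|^2,
\end{equation*}
and symmetrically in $B$. The mechanism is a bathtub estimate: after symmetrizing one set against the radial potential produced by the other, the displaced mass must cross the boundary sphere of the optimal ball, and because the hypothesis $w'(r)\le -cr$ forces the radial derivative of that potential to be bounded away from zero near the critical radius (this is exactly where $c(W,R_*)=\inf_{r\in(0,20R_*)}(-w'(r)/r)>0$ enters), a mass discrepancy of size $m$ concentrated in the thinnest admissible annulus costs energy $\gtrsim m^2$. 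This is the step that turns the $L^1$ set-discrepancy into a \emph{quadratic} gain.

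Finally I would recombine using the layer-cake identity $\|\rho-\rho^*\|_{L^1}=\int_0^{\|\rho\|_{L^\infty}}|U_s\triangle U_s^*|\,ds$. Integrating the per-level quadratic gains and applying Cauchy--Schwarz over $s\in(0,\|\rho\|_{L^\infty})$ converts $\int |U_s\triangle U_s^*|^2\,ds$ into $\|\rho-\rho^*\|_{L^1}^2/\|\rho\|_{L^\infty}=4\|\rho\|_{L^1}^2\,\delta(\rho)^2/\|\rho\|_{L^\infty}$; carefully tracking the mass factors that appear in the two-set estimate supplies the remaining power of $\|\rho\|_{L^1}$, producing the stated exponents $\|\rho\|_{L^1}^3\|\rho\|_{L^\infty}^{-1}$.

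I expect the main obstacle to be twofold. First, the double integral genuinely couples \emph{different} levels $s\ne t$: the diagonal $s=t$, where one could quote the patch stability directly, has measure zero, so the two-set estimate must control $A\ne B$, and one must verify that the gain survives uniformly over pairs of nested balls of disparate radii. Second, and more delicate, is the alignment of centers: the two-set deficit only controls each pair being \emph{concentric} balls up to a common translation, whereas $\delta(\rho)$ refers to a single global translate $a$, so one must show that forcing every level set to a common center pins that center to the optimal $a$. I also note that the tempting shortcut of linearizing, namely $\mathcal E_W[\rho^*]-\mathcal E_W[\rho]=-2\int f\,(W*\rho^*)-\mathcal E_W[f]$ with $f=\rho-\rho^*$, stalls precisely because the self-interaction term $-\mathcal E_W[f]$ has an unfavorable sign for the logarithmic kernel ($\mathcal E_{-\log}[f]\ge0$ on mean-zero $f$) and hence cannot be discarded; this is what motivates keeping every contribution manifestly nonnegative through the layer-cake route.
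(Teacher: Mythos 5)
First, a point of order: the paper you are trying to match contains no proof of Theorem~\ref{thm_yy} at all --- the statement is imported verbatim from \cite{YY}, and the paper only records (in a footnote) the explicit constant from that reference. So the only meaningful comparison is with the proof in \cite{YY} itself, which proceeds by a genuinely different route (a quantitative sharpening of the monotonicity of the interaction energy under a symmetrization flow, applied directly to the density $\rho$), rather than by reducing the density case to the known set case. That reduction is exactly what you propose, and it stalls at the two places you yourself flag, plus one you do not.

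The two obstacles you name are not technical afterthoughts; they are the mathematical content of the theorem, and your proposal does not overcome either. (i) The ``quantitative two-set Riesz estimate'' for distinct nested sets $A\neq B$ is not available off the shelf: \cite{FP}, \cite{FL} and \cite{BC} all treat a single set (in your notation, the diagonal $A=B$), and your bathtub heuristic does not by itself yield the off-diagonal estimate uniformly over pairs of balls of disparate radii with the correct size factor (the slope of $W*\mathbf{1}_{B^*}$ at the radius of $A^*$ degenerates both when $|B|$ is small and when $A^*$ sits deep inside $B^*$, where the gradient of the potential vanishes at the center). (ii) Even granting (i), each pair $(s,t)$ produces its own optimal translation $a_{s,t}$, whereas the layer-cake identity $\|\rho-\rho^*\|_{L^1}=\int|U_s\triangle U_s^*|\,ds$ and the definition of $\delta(\rho)$ require one global translation; naming this alignment problem is not the same as solving it (one would need, e.g., a variance-type argument over the pair-dependent centers, with constants that survive the integration in $s$ and $t$).

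There is also a failure you did not flag, and it shows the bathtub mechanism alone cannot produce the stated constant $c(W,R_*)$. Your mechanism requires $-w'(r)/r\geq c>0$ at the distances where displaced mass actually sits, but the hypothesis $\supp\rho^*\subset B(0,R_*)$ controls only the \emph{measure} of $\supp\rho$, not its diameter: level sets may have components at arbitrarily large mutual distance. Take $w(r)=e^{-r^2}$ (which satisfies all hypotheses) and $\rho$ equal to two half-discs of total area $\pi R_*^2$ separated by a huge distance $L$: the asymmetry $\delta(\rho)$ is of order one, but any lower bound proportional to $\inf_{r<L}(-w'(r)/r)$ is exponentially small in $L$. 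The true deficit here is of order one, but it comes from the \emph{lost interaction} between the two far-apart pieces, not from mass climbing a potential gradient --- a mechanism your outline never invokes. This is why the constant in Theorem~\ref{thm_yy} can depend only on $R_*$: the proof in \cite{YY} contains a dichotomy (either, after translation, essentially all the mass lies within distance $O(R_*)$ of a common center, or the deficit is already bounded below), and any completion of your program would need the same. The parts of your proposal that do work --- termwise nonnegativity of the layer-cake expansion via Riesz, and the Cauchy--Schwarz bookkeeping producing the exponents $\|\rho\|_{L^1}^{3}\|\rho\|_{L^\infty}^{-1}$ --- are the routine parts.
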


	\noindent  \textbf{$\circ$ Contribution from others:}
	
	Let us denote 
	\begin{equation}
		\label{E234_def}
		E_{234}[\rho(t)] := \iint_{\mathbb{R}^2\times \mathbb{R}^2}\rho(x,t) \rho(y,t) K_{234}(x,y) dxdy,
	\end{equation}
	where \begin{equation}\label{K234_def}
		K_{234}(x,y) := K_2(x,y) + K_3(x,y) + K_4(x,y).
	\end{equation}
	Below we point out an important property of the potential $K_{234}$: for any $p\in Q$ and $q\in\mathcal{O}_{pv}(p)$, we have 
	\begin{equation}\label{K234_eq}
		K_{234}(p,p) = K_{234}(q, q).
	\end{equation}
	It follows from a direct computation using the formula for $\mathcal{O}_{pv}(p)$ given in \eqref{eq:orbit-pv}. Alternatively, $K_{234}(p,p)$ is the Hamiltonian for the point vortex motion of a single vortex in $Q$, which is conserved; see \cite[Section 2.3]{Yang} for details. 
	
	\medskip
	\noindent\textbf{Viewpoint 2: }Decomposing $\tld E[\rho(t)]$ into ``left dipole'' and ``right dipole'' contributions.
	
	\medskip
	\noindent \textbf{$\circ$ Contribution from the dipole on the left:} 
	
	For any $x,y \in Q$, one can easily check that $|x+y|\geq |x-\tld y|$, thus $K_2(x,y)+K_3(x,y)< 0$. Therefore, if $\rho_0\geq 0$ in $Q$, we have $\rho(\cdot,t)\geq 0$ in $Q$, leading to
	\begin{equation}
		\label{observ}
		E_2[\rho(t)]+E_3[\rho(t)]\leq 0 \quad\text{ for all }t\in\mathbb{R}.
	\end{equation}

	\noindent  \textbf{$\circ$ Contribution from the dipole on the right:}

	Next we move on to the sum $E_1 + E_4$, which comes from the contribution from the right half plane, and it is a double integral with potential $K_1+K_4 = \log\frac{|x-\bar y|}{|x-y|}$. Let us state and prove the following lemma regarding a pointwise upper bound for this potential. Although elementary, it will play a central role in the proof.
	
	\begin{lemma}[Key Lemma]\label{lemma_K} Let $x = (x_1,x_2)$ with $x_2 > 0$. 
		For any $H>300$, we have 
		\[
		\log \frac{|x-\bar y|}{|x-y|} \leq \varphi(|x-y|) + g^H(2x_2) + \frac{300}{H}, 
		\]
		where $\varphi \in C((0,+\infty))$ is defined as
		\[
		\varphi(s) := \begin{cases}
			-\log s & \text{ for }0 < s < 3,\\
			-\log 3& \text{ for }s\geq 3,
		\end{cases}
		\]
		and $g^H \in C((0,+\infty))$ is defined as
		\begin{equation}
			\label{def_gH}
			g^H(s) := \begin{cases}
				\log s & \text{ for }s \geq H,\\
				\log H + \frac{s-H}{H} & \text{ for }0<s<H.
			\end{cases}
		\end{equation}
	\end{lemma}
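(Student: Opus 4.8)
The plan is to reduce the two–dimensional pointwise bound to a single scalar inequality in the variable $s := 2x_2$. First I would rewrite the left-hand side using the exact half-plane identity
\[
\log \frac{|x-\bar y|}{|x-y|} = \frac12 \log\left(1 + \frac{4x_2 y_2}{|x-y|^2}\right),
\]
which follows from $|x-\bar y|^2 - |x-y|^2 = (x_2+y_2)^2 - (x_2 - y_2)^2 = 4 x_2 y_2$. Writing $r := |x-y|$ and $s := 2x_2 > 0$, the bound $y_2 \le x_2 + |x_2 - y_2| \le x_2 + r$ holds for every $y$, and multiplying by $4x_2 > 0$ gives $4 x_2 y_2 \le 4x_2(x_2+r)$. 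Hence
\[
1 + \frac{4x_2 y_2}{r^2} \le 1 + \frac{4x_2^2 + 4x_2 r}{r^2} = 1 + \frac{s^2}{r^2} + \frac{2s}{r} = \left(1 + \frac{s}{r}\right)^2,
\]
so that $\log\frac{|x-\bar y|}{|x-y|} \le \log\!\left(1 + \frac sr\right)$. Note this step uses only $x_2>0$, with no sign condition on $y_2$.

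The next step is to absorb $\varphi(r)$ by a dichotomy on the size of $r$. If $r \ge 3$, then $\varphi(r) = -\log 3$ and, since $3s/r \le s$,
\[
\log\left(1 + \frac sr\right) - \varphi(r) = \log\left(3 + \frac{3s}{r}\right) \le \log(3+s).
\]
If instead $r < 3$, then $\varphi(r) = -\log r$ and
\[
\log\left(1 + \frac sr\right) - \varphi(r) = \log(r + s) \le \log(3+s).
\]
In both cases the claim reduces to the single scalar inequality
\[
\log(3+s) \le g^H(s) + \frac{300}{H} \qquad \text{for all } s > 0,\ H > 300.
\]

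Finally I would establish this scalar inequality by splitting at $s = H$. For $s \ge H$ one has $g^H(s) = \log s$, so the inequality becomes $\log(1 + 3/s) \le 300/H$, which holds because $\log(1+3/s) \le 3/s \le 3/H \le 300/H$. For $0 < s < H$, set
\[
D(s) := g^H(s) + \frac{300}{H} - \log(3+s) = \log H + \frac{s - H}{H} + \frac{300}{H} - \log(3+s);
\]
then $D''(s) = (3+s)^{-2} > 0$, so $D$ is strictly convex on $(0,H)$ and attains its minimum at the interior critical point $s_* = H - 3$ (where $D'(s_*) = H^{-1} - (3+s_*)^{-1} = 0$), with $D(s_*) = 297/H > 0$. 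Thus $D \ge 0$ throughout, completing the proof. The one genuine subtlety is this last convexity argument: one cannot simply discard the favorable linear term $s/H$ in $g^H$ and check endpoints, since $\log(3+s)$ is concave and could exceed an affine upper bound in the interior; locating the interior minimum of the gap $D$ is what makes the margin $297/H$ (and hence the choice of constants) work.
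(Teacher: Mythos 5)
Your proof is correct, and it takes a genuinely different route from the paper's. Both arguments split on $|x-y|\lessgtr 3$ (this is forced by the definition of $\varphi$), but after that they diverge. The paper works directly with the coordinates $d_1=x_1-y_1$, $d_2=x_2-y_2$, and runs a second case analysis on whether $x_2\gtrless H/100$, estimating $\log|x-\bar y|$ (or the ratio) term by term in each of four subcases with generous numerical constants; the function $g^H$ enters only through the crude bounds $g^H(2x_2)\geq g^H(0)$ and $g^H(2x_2)\geq\log(2x_2)$. You instead exploit the exact identity $|x-\bar y|^2-|x-y|^2=4x_2y_2$ together with the single bound $y_2\leq x_2+|x-y|$ to get the clean pointwise estimate $\log\frac{|x-\bar y|}{|x-y|}\leq\log\bigl(1+\tfrac{s}{r}\bigr)$ with $s=2x_2$, $r=|x-y|$, which collapses the whole lemma to the one-variable inequality $\log(3+s)\leq g^H(s)+300/H$; that inequality you then prove by convexity of the gap $D$, locating its interior minimum at $s_*=H-3$ with value $297/H$. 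Your approach buys three things: it avoids the case analysis in $x_2$ entirely, it explains structurally why the linear extension in the definition of $g^H$ is exactly the right tangent-line completion of $\log$ (the minimum of $D$ sits where the tangent condition $1/H=1/(3+s)$ holds), and it yields the sharper constant $3/H$ in place of $300/H$ (your two cases give $\log(3+s)\leq g^H(s)+3/H$). The paper's approach is more pedestrian but requires no identity and no calculus, only monotonicity of $\log$ and elementary numerical bounds. One small point worth making explicit if you write this up: the chain of inequalities remains valid in the degenerate case $x=\bar y$ (where the left-hand side is $-\infty$), and the identity step needs no sign assumption on $y_2$, as you correctly note.
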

			\begin{figure}[htbp]
		\begin{center}
		\hspace*{-1cm}\includegraphics[scale=1]{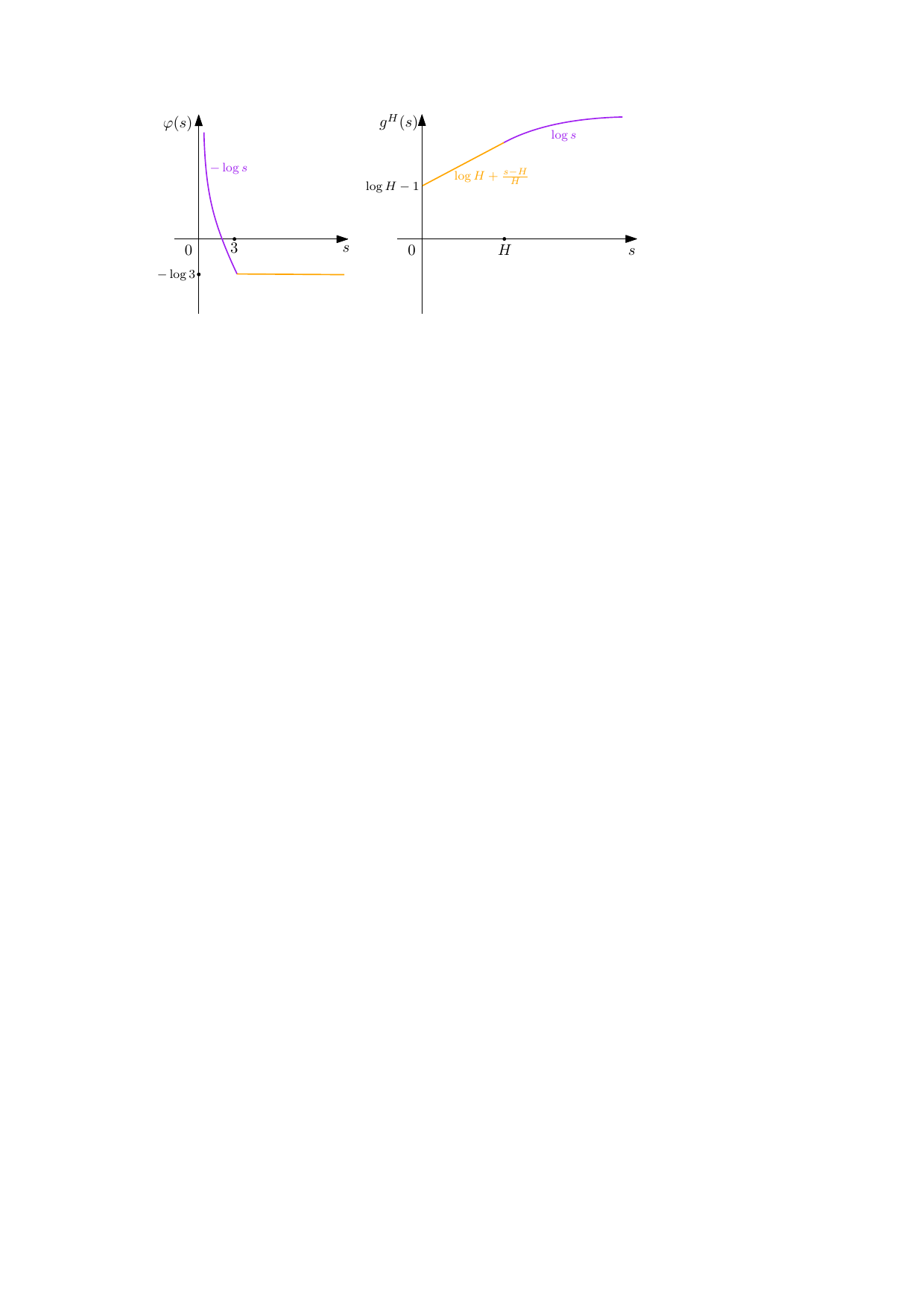}
				\caption{\label{fig_functions}
		Illustrations of the functions $\varphi$ and $g^H$ in Lemma~\ref{lemma_K}. Note that both functions are continuous in $(0,\infty)$. In addition, $g^H$ has its left and right derivative at $H$ matching each other, and is concave in $(0,\infty)$.}
		\end{center}
		\end{figure}

	\begin{proof}
		Let us denote $d_1 := x_1-y_1$ and $d_2 := x_2-y_2$. With such notations, we have
		\[
		\log \frac{|x-\bar y|}{|x-y|} = \frac{1}{2} \log \frac{d_1^2 + (2x_2-d_2)^2}{d_1^2 + d_2^2}.
		\]
		The proof is divided into the following two cases:
		
		\medskip
		
		\noindent\textbf{Case 1.} $|x-y|=\sqrt{d_1^2 + d_2^2} < 3$. In this case, note that $\varphi(|x-y|)=-\log|x-y|$, thus it suffices to prove that $\log |x-\bar y| \leq g^H(2x_2)+\frac{300}{H}$.
		To show this,  we discuss two sub-cases depending on $x_2$:
		
		\begin{itemize}
			\item If $x_2 \geq \frac{H}{100}$, we rewrite $\log|x-\bar y|$ as
			\[
			\begin{split}
				\log|x-\bar y| &= \frac{1}{2} \log(d_1^2 + (2x_2 - d_2)^2)\\
				&= \log(2x_2-d_2) + \frac{1}{2} \log\left(1+\frac{d_1^2}{(2x_2-d_2)^2}\right)\\
				&\leq  \log(2x_2+3) + \frac{1}{2} \log\left(1+\frac{3^2}{(2x_2-3)^2}\right)\\
				&= \log(2x_2) + \log\left(1+\frac{3}{2x_2}\right) + \frac{1}{2} \log\left(1+\frac{3^2}{(2x_2-3)^2}\right).
			\end{split}
			\]
			Since $H>300$ and $x_2\geq\frac{H}{100}>3$, we have $2x_2-3\geq x_2$, so
			\[
			\log|x-\bar y| \leq \log(2x_2) +\frac{3}{2x_2} + \frac{9}{2x_2^2} \leq \log(2x_2) +\frac{300}{H}.
			\]
			\item 
			If $x_2 < \frac{H}{100}$, using that $(2x_2-d_2)^2 \leq 8x_2^2 + 2d_2^2$, we directly bound $\log|x-\bar y|$ as
			\begin{equation}\label{temp0}
				\begin{split}
					\log|x-\bar y| 
					&\leq \frac{1}{2} \log (d_1^2 + 2d_2^2 + 8x_2^2)\\
					&\leq \frac{1}{2} \log\left(18 + \frac{8H^2}{10000}\right)\\
					&\leq \frac{1}{2} \log\left(\frac{H^2}{1000}\right)\\
					& < g^H(0) \\
					&\leq g^H(2x_2).\\
				\end{split}
			\end{equation}
			Here the third inequality follows from the fact that $H>300$, and the last inequality follows from the fact that $g^H(s)$ is an increasing function in $s$.
		\end{itemize}
		
		\textbf{Case 2.} $|x-y|=\sqrt{d_1^2 + d_2^2} \geq 3$. In this case, have
		\begin{equation}\label{temp1}
			\begin{split}
				\log\frac{|x-\bar y|}{|x-y|} &\leq \frac{1}{2} \log \frac{d_1^2+2d_2^2 + 4x_2 d_2 + 4 x_2^2}{d_1^2+d_2^2} \leq \frac{1}{2}\log\left(2 + \frac{4x_2d_2 + 4x_2^2}{d_1^2+d_2^2}\right).\\ 
			\end{split}
		\end{equation}
		Below we discuss two sub-cases:
		\begin{itemize}
			\item If $\frac{x_2}{\sqrt{d_1^2+d_2^2}} < \frac{H}{100}$, then \eqref{temp1} becomes
			\[
			\log\frac{|x-\bar y|}{|x-y|} \leq \frac{1}{2}\log\left(2 + \frac{4H}{100} + \frac{4H^2}{10000}\right)\leq \frac{1}{2} \log\left(\frac{H^2}{1000}\right)\leq \varphi(|x-y|) + g^H(2x_2),
			\]
			where the last step follows from a similar argument as the last three inequalities in \eqref{temp0}, and we also use that $\varphi(|x-y|)\geq -\log 3$.
			
			\item If $\frac{x_2}{\sqrt{d_1^2+d_2^2}} \geq \frac{H}{100}$, then \eqref{temp1} becomes
			\[
			\begin{split}
				\log\frac{|x-\bar y|}{|x-y|} &\leq \frac{1}{2}\log\left(\Big(4+\frac{500}{H}\Big)\frac{x_2^2}{d_1^2+d_2^2}\right) \\
				&= \log(2x_2)-\log(|x-y|) +\frac{1}{2}\log\left(1+\frac{125}{H}\right)\\
				&\leq   \varphi(|x-y|) + g^H(2x_2) + \frac{300}{H},
			\end{split}
			\]
			where the last step follows from   $\varphi(|x-y|) \geq -\log(|x-y|)$ and $g^H(2x_2)\geq \log(2x_2)$.\qedhere 
		\end{itemize}
	\end{proof}

	\subsection{Forward-in-time orbital stability for far-away initial data}
	
	In this subsection, we prove a key result that lead to orbital stability for all positive times, under the assumptions that the initial vortex has compact support whose area is of order 1, and satisfies various inequalities of the energy functional $E_1,\dots,E_4$. These assumptions look rather technical, but roughly speaking, they correspond to the setting where the initial data $\rho_0$ is centered near a far-away point $(l_0,h_0)$ with $l_0\gg h_0 \gg 1$, and it is already very close to a translation of its radially decreasing rearrangement $\rho_0^*$. To help the readers visualize these assumptions, in Remark~\ref{rmk_symm} afterwards, we give a concrete example of $\rho_0$ that satisfies these assumptions.

	\begin{theorem}
		\label{thm:to-infty}
		Let $\omega_0$ be an odd-odd initial data to \eqref{eq:Euler}.
		For any $\varep \in (0, \frac{1}{1000})$ and $A\geq 1$, assume $\rho_0 = \omega_0 \mathbf{1}_Q$ satisfies the following four assumptions:
		\begin{equation}\label{a1}
			\int_{\mathbb{R}^2} \rho_0(x) dx = 1, \quad 0\leq\rho_0 \leq A \quad  \text{ and }\quad |\supp \rho_0| \leq \pi;
		\end{equation}
		\begin{equation}\label{a2}E_1[\rho_0] - E_1[\rho_0^*] \geq - \varep;
		\end{equation}
		\begin{equation}\label{a3}
			X_{02} := \int_{\mathbb{R}^2}\rho_0(x) x_2 dx > \varep^{-1}, \quad\text{and}\quad E_4[\rho_0] - \log(2X_{02}) \geq - \varep;
		\end{equation}
		\begin{equation}\label{a4}
			E_2[\rho_0] + E_3[\rho_0] \geq- \varep.
		\end{equation}
		
		Then for all $t\geq 0$, $\rho(\cdot,t) := \omega(\cdot,t) \mathbf{1}_Q$ satisfies
		\begin{equation}\label{conclusion1}
			\inf_{a\in\mathbb{R}^2} \|\rho(\cdot,t) - \rho_0^*(\cdot-a)\|_{L^1(\mathbb{R}^2)} \leq C_0 \sqrt{A\varep},
		\end{equation}
		where $C_0$ is a universal constant.
		In addition, its vertical center of mass $X_2(t) := \int_{\mathbb{R}^2} \rho(x,t) x_2 dx$ satisfies
		\begin{equation}\label{conclusion2}
			\frac{X_2(t)}{X_{02}} \in (1-400 \varep, 1] \quad\text{ for all } t\geq 0.
		\end{equation}
		
	\end{theorem}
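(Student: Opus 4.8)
The plan is to exploit conservation of the total kinetic energy $\tld E[\rho(t)]$ together with the four assumptions, which pin $\tld E[\rho_0]$ to within $O(\varep)$ of its maximal possible value, and then to show that for $t\ge 0$ the only way for the energy to stay at this level is for $\rho(\cdot,t)$ to remain both \emph{concentrated} (close to a translate of $\rho_0^*$) and \emph{high up} (with $X_2(t)\approx X_{02}$).

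First I would record the $t=0$ lower bound: adding \eqref{a2}, \eqref{a3} and \eqref{a4} gives $\tld E[\rho_0]\ge E_1[\rho_0^*]+\log(2X_{02})-3\varep$, and by conservation $\tld E[\rho(t)]=\tld E[\rho_0]$ for all $t$. Next I would produce a matching upper bound for $t\ge 0$. Split $\tld E=(E_1+E_4)+(E_2+E_3)$; by \eqref{observ} the second group is $\le 0$. For the first group set $H=\varep^{-1}$ (so $H>300$ as $\varep<\tfrac1{1000}$) and integrate the Key Lemma (Lemma~\ref{lemma_K}) against $\rho(x,t)\rho(y,t)$, using $K_1+K_4=\log\frac{|x-\bar y|}{|x-y|}$:
\[
(E_1+E_4)[\rho(t)]\le \mathcal E_\varphi[\rho(t)]+\int_{\mathbb R^2}\rho(x,t)\,g^H(2x_2)\,dx+\frac{300}{H}.
\]
Here I use three facts. (i) Since $\varphi$ is radially nonincreasing and $\rho(\cdot,t)^*=\rho_0^*$ is supported in a ball of radius $\le 1$ (on which $\varphi=-\log=K_1$), the Riesz rearrangement inequality gives $\mathcal E_\varphi[\rho(t)]\le\mathcal E_\varphi[\rho_0^*]=E_1[\rho_0^*]$. (ii) Since $g^H$ is concave and $\int\rho(\cdot,t)=1$, Jensen gives $\int\rho\,g^H(2x_2)\,dx\le g^H(2X_2(t))$; and as $X_2(t)\le X_{02}$ by Lemma~\ref{lemma_center} while $2X_{02}>2\varep^{-1}>H$, we get $g^H(2X_2(t))\le g^H(2X_{02})=\log(2X_{02})$. (iii) $\frac{300}{H}=300\varep$. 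Hence $\tld E[\rho(t)]\le E_1[\rho_0^*]+\log(2X_{02})+300\varep$.

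Comparing with the conserved lower bound and introducing the two nonnegative defects $\alpha:=E_1[\rho_0^*]-\mathcal E_\varphi[\rho(t)]\ge0$ and $\beta:=\log(2X_{02})-g^H(2X_2(t))\ge0$, the sandwich forces $\alpha+\beta\le 303\varep$. The height defect $\beta\le303\varep$ yields \eqref{conclusion2}: since $g^H$ caps at $\log H=\log\varep^{-1}$ on $[0,H]$ while $\log(2X_{02})-303\varep>\log\varep^{-1}$ for $\varep<\tfrac1{1000}$, we must have $2X_2(t)>H$, so $g^H(2X_2(t))=\log(2X_2(t))\ge\log(2X_{02})-303\varep$, i.e. $X_2(t)/X_{02}\ge e^{-303\varep}>1-400\varep$, which with $X_2(t)\le X_{02}$ is \eqref{conclusion2}. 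The concentration defect $\alpha\le303\varep$ reads $\mathcal E_\varphi[\rho_0^*]-\mathcal E_\varphi[\rho(t)]\le303\varep$; feeding this into the quantitative stability estimate (Theorem~\ref{thm_yy}, with $\|\rho(\cdot,t)\|_{L^1}=1$ and $\|\rho(\cdot,t)\|_{L^\infty}=\|\rho_0\|_{L^\infty}\le A$ by rearrangement invariance of transport) bounds $\delta(\rho(t))^2\lesssim A\varep$, which is exactly \eqref{conclusion1} since $\inf_a\|\rho(\cdot,t)-\rho_0^*(\cdot-a)\|_{L^1}=2\delta(\rho(t))$.

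The main obstacle is this last conversion. The Key Lemma forces the \emph{flat-truncated} kernel $\varphi$ into the upper bound --- the plateau value $-\log 3$ beyond scale $3$ is precisely what lets the Riesz step and the Jensen/concavity step close simultaneously --- so the energy argument controls the \emph{truncated} defect $\mathcal E_\varphi[\rho_0^*]-\mathcal E_\varphi[\rho(t)]$, not the genuine Newtonian defect $\mathcal E_{K_1}[\rho_0^*]-\mathcal E_{K_1}[\rho(t)]$, which can be far larger if a little mass escapes to a large distance. But $\varphi$ is constant on $(3,\infty)$, so it lies just outside the hypotheses ($w\in C^1$, $w'<0$) of Theorem~\ref{thm_yy}. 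The delicate point is therefore to obtain a stability estimate for $\varphi$ itself: because $\rho_0^*$ is supported in a ball of radius $\le 1$ and the plateau value $-\log 3$ is strictly below the values of $\varphi$ on that support, displacing mass out to the plateau still costs a definite amount of $\varphi$-energy, so the truncated defect should control the full $L^1$ asymmetry. I would handle this either by invoking/adapting the general-kernel stability results underlying \cite{YY}, or by splitting off the escaped mass (whose total is $O(\varep)$ by the same $\varphi$-energy bound) and applying Theorem~\ref{thm_yy} to the bulk, on which $\varphi$ and $-\log$ coincide.
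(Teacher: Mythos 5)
Your energy sandwich is set up correctly and, up to cosmetic choices, coincides with the paper's argument: the lower bound from \eqref{a2}--\eqref{a4}, the use of \eqref{observ}, Lemma~\ref{lemma_K}, Jensen's inequality for the concave $g^H$, and Lemma~\ref{lemma_center} all appear exactly as in the paper (the paper takes $H=X_{02}$ rather than $H=\varep^{-1}$, and proves \eqref{conclusion2} via the tangent line of $g^{X_{02}}$ at $2X_{02}$ rather than your cap-and-exponentiate argument, but both routes are valid). The genuine gap is precisely the point you flag at the end: converting the truncated-kernel deficit $\alpha:=E_1[\rho_0^*]-\mathcal{E}_\varphi[\rho(t)]\leq 303\varep$ into the $L^1$ bound \eqref{conclusion1}. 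Theorem~\ref{thm_yy} cannot be invoked for $\varphi$: the hypothesis $w'(r)<0$ for all $r>0$ fails on the plateau, and indeed the explicit constant $c(W,R_*)=\inf_{r\in(0,20R_*)}\bigl(-w'(r)/r\bigr)$ from \cite{YY} \emph{vanishes} for $\varphi$, so there is no estimate to "adapt" in a soft way. Neither of your two sketched repairs is carried out, and both are harder than they look: re-proving a stability estimate for plateau kernels is a piece of work of the same type as \cite{YY} itself; and the mass-splitting route first requires showing that the mass far from the core is $O(\varep)$ --- itself a quantitative stability claim that must be proved for every splitting fraction $m\in[0,1]$, not just $m\ll 1$ --- after which the "bulk" has mass $1-O(\varep)$ rather than $1$ and its support can have diameter up to $6$, where $\varphi\neq-\log$, so Theorem~\ref{thm_yy} still does not apply to the bulk without tracking further renormalization errors.

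The paper closes this gap with a one-line device that reverses your order of operations and slots directly into your framework: instead of applying Riesz to $\varphi$ first and trying to exploit the deficit afterwards, majorize the kernel. Fix $\tilde\varphi\in C^1((0,\infty))$ with $\tilde\varphi=\varphi$ on $(0,2]$, $\tilde\varphi\geq\varphi$ on $[2,\infty)$, and $\tilde\varphi'<0$ everywhere, and set $\tilde\Phi(x):=\tilde\varphi(|x|)$. Then, since $\rho(\cdot,t)\geq 0$,
\begin{equation*}
\mathcal{E}_\varphi[\rho(t)]\;\leq\;\mathcal{E}_{\tilde\Phi}[\rho(t)]\;\leq\;\mathcal{E}_{\tilde\Phi}[\rho_0^*]-c_0A^{-1}\inf_{a\in\mathbb{R}^2}\|\rho(\cdot,t)-\rho_0^*(\cdot-a)\|_{L^1}^2\;=\;E_1[\rho_0^*]-c_0A^{-1}\inf_{a\in\mathbb{R}^2}\|\rho(\cdot,t)-\rho_0^*(\cdot-a)\|_{L^1}^2,
\end{equation*}
where the middle inequality is Theorem~\ref{thm_yy} applied to the strictly decreasing kernel $\tilde\Phi$, and the last equality uses that $\supp\rho_0^*$ has diameter at most $2$, on which $\tilde\varphi=\varphi=-\log$. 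In other words, your $\alpha$ automatically dominates $c_0A^{-1}\inf_a\|\rho(\cdot,t)-\rho_0^*(\cdot-a)\|_{L^1}^2$, and combining this with $\alpha\leq 303\varep$ yields \eqref{conclusion1}. So the scaffolding you built is sound; what is missing is exactly this kernel majorization, which is the crucial trick in the paper's proof rather than a routine technicality.
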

	
	\begin{proof}
		From the conservation of kinetic energy, we have $\tld E[\rho_0] = \tld E[\rho(t)]$ for all $t\in\mathbb{R}$. In what follows, we shall obtain a lower bound of $\tld E[\rho_0]$, and an upper bound of $\tld E[\rho(t)]$ for $t\geq 0$. 
		
		\medskip
		\noindent\textbf{Lower bound of energy.}	To obtain a lower bound of $\tld E[\rho_0]$, we directly apply the assumptions \eqref{a2}--\eqref{a4} to obtain
		\begin{equation}
			\label{lowerbd}
			\tld E[\rho_0] \geq E_1[\rho_0^*] + \log(2X_{02}) - 3\varep.
		\end{equation}
		
		\medskip
		\noindent\textbf{Upper bound of energy.}	 Next we will bound $\tld E[\rho(t)]$ from above for all $t\geq 0$. Using the observation \eqref{observ}, we have $E_2[\rho(t)] + E_3[\rho(t)]\leq 0$, therefore
		\[
		\tld E[\rho(t)] \leq E_1[\rho(t)] + E_4[\rho(t)] =  \iint_{\mathbb{R}^2\times\mathbb{R}^2}\rho(x,t)\rho(y,t) \log \frac{|x-\bar y|}{|x-y|}  dxdy.
		\]
		To control the right hand side, we apply Lemma~\ref{lemma_K} (with $H$ replaced by $X_{02}$; recall that \eqref{a3} gives $X_{02}\geq\varep^{-1}>1000$) to obtain a pointwise upper bound on the integrand. This leads to
		\begin{equation}
			\label{eq2}
			\begin{split}
				\tld E[\rho(t)] 
				& \leq \iint_{\mathbb{R}^2\times \mathbb{R}^2}\rho(x,t) \rho(y,t)  \varphi(|x-y|) dxdy+ \int_{\mathbb{R}^2} \rho(x,t) g^{X_{02}}(2x_2)dx + \frac{300}{X_{02}}.
			\end{split}
		\end{equation}
		We will obtain an upper bound for each of the three terms on the right hand side: For the last term, our assumption \eqref{a3} directly lead to
		\begin{equation}\label{term3}
			\frac{300}{X_{02}} \leq 300 \varep.
		\end{equation}
		
		For the second term, since $g^{X_{02}}$ is concave and $\int_{\mathbb{R}^2}\rho(x,t)dx=1$, applying Jensen's inequality, we have
		\begin{equation}\label{term2}
			\int_{\mathbb{R}^2} \rho(x,t) g^{X_{02}}(2x_2)dx  \leq g^{X_{02}}(2X_2(t)).
		\end{equation} 
		
		Finally, for the first term, let us first define $\tilde \varphi \in C^1((0,+\infty))$ such that $\tilde\varphi(s) = \varphi(s)$ for $0<s\leq 2$,  $\tilde\varphi(s) \geq \varphi(s)$ for $s\in[2,\infty)$, and $\tilde\varphi'(s)<0$ for all $s>0$.  Let us also define $\tilde\Phi:\mathbb{R}^2\setminus\{0\}\to\mathbb{R}$ such that $\tilde\Phi(x) := \tilde\varphi(|x|)$. Since $\tilde\varphi \geq \varphi$ and $\rho(\cdot,t)\geq 0$, clearly we have
		\begin{equation}\label{term1}
			\iint_{\mathbb{R}^2\times \mathbb{R}^2}\rho(x,t) \rho(y,t)  \varphi(|x-y|) dxdy\leq \iint_{\mathbb{R}^2\times \mathbb{R}^2}\rho(x,t) \rho(y,t)  \tilde\varphi(|x-y|) dxdy = \mathcal{E}_{\tilde\Phi}[\rho(t)].
		\end{equation}

		Such $\tilde\Phi$ allows us to apply Theorem~\ref{thm_yy} to obtain   
		\begin{equation}\label{temp123}
			\mathcal{E}_{\tilde\Phi}[\rho^*(t)] - \mathcal{E}_{\tilde\Phi}[\rho(t)] \geq c(\tilde\Phi, R_*) \|\rho(\cdot,t)\|_{L^\infty}^{-1} \inf_{a\in\mathbb{R}^2} \|\rho(\cdot,t)-\rho^*(\cdot-a,t)  \|_{L^1(\mathbb{R}^2)}^2,
		\end{equation}
		where $R_*$ is the radius of support of $\rho^*(\cdot,t)$.
		Since $\rho(\cdot,t)$ has the same distribution as $\rho_0$, we have $\rho^*(\cdot,t) = \rho_0^*$ for all $t$, and $\|\rho(\cdot,t)\|_{L^\infty} = \|\rho_0\|_{L^\infty} \leq A$ (the last step is by \eqref{a1}).  Also by \eqref{a1}, $\rho_0^*$ is supported in the unit disk (so $R_*=1$), therefore $c(\tilde\Phi, R_*)$ is a universal constant since $\tilde\Phi$ is fixed. Let us denote it by $c_0$. With these observations, we rewrite \eqref{temp123} as
		\begin{equation}
			\label{term12}
			\mathcal{E}_{\tilde\Phi}[\rho(t)] \leq \mathcal{E}_{\tilde\Phi}[\rho^*_0] - c_0 A^{-1} \inf_{a\in\mathbb{R}^2} \|\rho(\cdot,t)-\rho_0^*(\cdot-a) \|_{L^1(\mathbb{R}^2)}^2.
		\end{equation}
		
		Next we make another elementary but useful observation. Even though the interaction potential  $\tilde \Phi(x-y)$ is different from $K_1(x-y)$, they do agree with each other when $|x-y|\leq 2$ (recall that $\tilde\varphi(s) = \varphi(s) = -\log s$ when $0<s\leq 2$). Since $\supp\rho_0^*\subset \overline{B(0,1)}$ (which follows from the assumption $|\supp\rho_0|\leq \pi$ in \eqref{a1}), any two points $x,y \in \supp\rho_0^*$ satisfy $|x-y|\leq 2$, therefore
		\begin{equation}\label{term13}
			E_1[\rho_0^*] = \mathcal{E}_{K_1}[\rho_0^*] = \mathcal{E}_{\tilde\Phi}[\rho_0^*].
		\end{equation}
		
		Finally, we apply \eqref{term3}, \eqref{term2}, \eqref{term1}, \eqref{term12} and \eqref{term13} to \eqref{eq2}, and arrive at
		\begin{equation}\label{eq3}
			\tld E[\rho(t)] \leq E_1[\rho_0^*] - c_0 A^{-1} \inf_{a\in\mathbb{R}^2} \|\rho(\cdot,t)-\rho_0^*(\cdot-a) \|_{L^1(\mathbb{R}^2)}^2  + g^{X_{02}}(2X_2(t)) + 300\varep.
		\end{equation}

		\medskip
		\noindent \textbf{Completion of the proof}. Finally, let us put together the lower bound \eqref{lowerbd} and the upper bound \eqref{eq3}, and use the fact that $\tld E[\rho_0] = \tld E[\rho(t)]$. They lead to
		\begin{equation}\label{eq_final}
			c_0 A^{-1}	\inf_{a\in\mathbb{R}^2} \|\rho(\cdot,t)-\rho_0^*(\cdot-a) \|_{L^1(\mathbb{R}^2)}^2 + \log(2X_{02}) - g^{X_{02}}(2X_2(t)) \leq 303 \varep.
		\end{equation}
		By Lemma~\ref{lemma_center}, we have $X_2(t)\leq X_{02}$ for all $t\geq 0$, therefore 
		\[
		g^{X_{02}}(2X_2(t)) \leq g^{X_{02}}(2X_{02}) = \log(2X_{02}) \quad\text{ for all }t\geq 0,
		\]
		where we used the definition of $g^H$ in \eqref{def_gH} with $H=X_{02}$.
		
		Plugging this into \eqref{eq_final} gives
		\[
		c_0 A^{-1}	\inf_{a\in\mathbb{R}^2} \|\rho(\cdot,t)-\rho_0^*(\cdot-a) \|_{L^1(\mathbb{R}^2)}^2\leq 303\varep \quad\text{ for all }t\geq 0,
		\]
		leading to \eqref{conclusion1}.
		
		To show \eqref{conclusion2}, note that \eqref{eq_final} also implies
		\begin{equation}\label{temp_log}
			\log(2X_{02}) - g^{X_{02}}(2X_2(t)) \leq 303 \varep \quad\text{ for all }t\geq 0.
		\end{equation}
		By the concavity of $g^{X_{02}}$ and the facts that $g^{X_{02}}(2X_{02})=\log(2X_{02})$ and $(g^{X_{02}})'(2X_{02})=\frac{1}{2X_{02}}$, we have 
		\[
		g^{X_{02}}(s)  \leq \log(2X_{02}) - \frac{2X_{02}-s}{2X_{02}} \quad\text{ for all } s \in [0, 2X_{02}].
		\]
		In particular, since we have $X_2(t)\leq X_{02}$ for all $t\geq 0$ by Lemma~\ref{lemma_center}, we can set $s=2X_2(t)$ in the above inequality, which becomes
		\[
		g^{X_{02}}(2X_2(t))  \leq \log(2X_{02}) - \frac{X_{02}-X_2(t)}{X_{02}} \quad\text{ for all }t\geq 0.
		\]
		Plugging this into \eqref{temp_log} yields
		\[
		\frac{X_2(t)}{X_{02}} \geq 1 - 303\varep \quad \text{ for all }t\geq 0,
		\]
		finishing the proof of \eqref{conclusion2}.
	\end{proof}

\begin{figure}[htbp]
		\begin{center}
		\hspace*{-1cm}\includegraphics[scale=1]{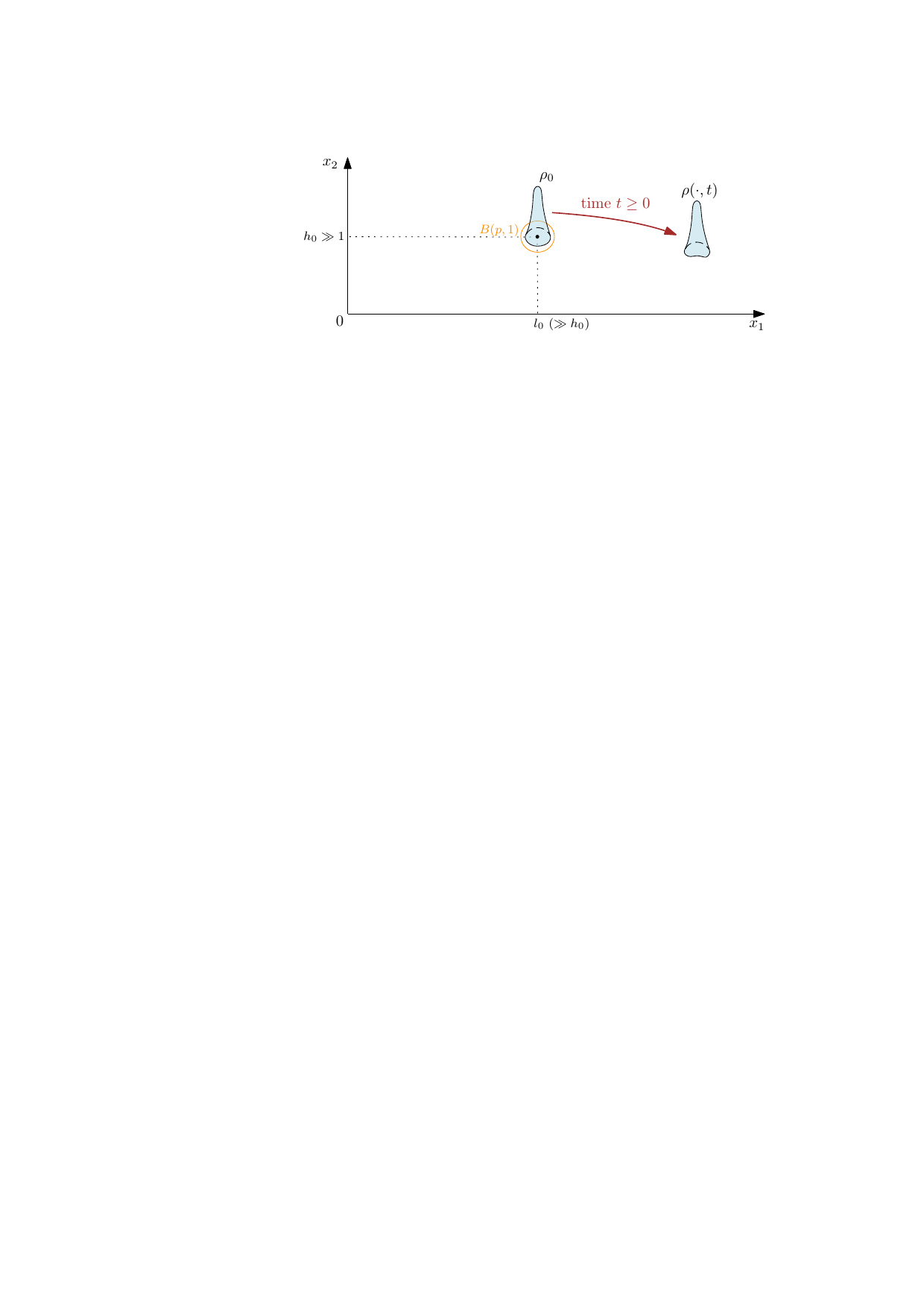}
				\caption{\label{fig_orbit2}
		Illustration of the initial data $\rho_0$ (and solution at a later time) in Remark~\ref{rmk_symm}.}
		\end{center}
		\end{figure}
	
	\begin{remark}
		\label{rmk_symm}
		To help the readers visualize the assumptions \eqref{a1}--\eqref{a4}, we give a concrete example of $\rho_0$ that satisfies them. Let us take any $\rho_0 = \rho_0^*(\cdot-p)$ that satisfies \eqref{a1}. Since such $\rho_0$ is a translation of $\rho_0^*$, \eqref{a2} is automatically satisfied. Furthermore, assume $p=(l_0, h_0)$ is such that $h_0 \geq 2\varep^{-1}$ and $l_0 \geq 2\varep^{-1} h_0$. See Figure~\ref{fig_orbit2} for an illustration.
  
  Since such $\rho_0$ is supported inside $B(p,1)$, let us check that \eqref{a3}--\eqref{a4} are also satisfied: for \eqref{a3}, note that $X_{02} = h_0 > \varep^{-1}$, and for any $x,y\in\supp\rho_0$, we have $|x-\bar y|>2h_0-2$. This leads to 
  \[E_4[\rho_0] -\log(2X_{02})\geq \log(2h_0-4)-\log(2h_0) \geq \log\left(1-\frac{1}{2h_0}\right)\geq -\varep.
  \]
  To check \eqref{a4}, note that
  \[
  E_3[\rho_0]+ E_4[\rho_0] \geq \inf_{x,y\in\supp\rho_0} \log\frac{|x-\tld y|}{|x+y|}
\geq \log\frac{l_0-2}{l_0+h_0+2} \geq -\varep,
\]
where the last inequality follows from our assumptions $h_0\geq 2\varep^{-1}, l_0 \geq 2\varep^{-1}h_0$.

	\end{remark}
	
	\medskip
	
	We now derive a rescaled version of Theorem~\ref{thm:to-infty} for concentrated vortices supported in a small area. Note that the assumptions and conclusions are mostly identical to \eqref{thm:to-infty}, except for the appearance  of the scaling factor $\lambda$ in \eqref{a1new} and \eqref{a3new}.
	\begin{corollary}
		\label{cor:rescale}
		Let $\omega_0$ be an odd-odd initial data to \eqref{eq:Euler}. 
		For any $\varep \in (0, \frac{1}{1000}), A\geq 1$ and $\lambda\in (0,1)$, assume $\rho_0 = \omega_0 \mathbf{1}_Q$ satisfies the following four assumptions:
		\begin{equation}\label{a1new}
			\int_{\mathbb{R}^2} \rho_0(x) dx = 1, \quad 0 \leq \rho_0 \leq A \lambda^{-2} \quad  \text{ and }\quad |\supp \rho_0| \leq \pi \lambda^2;
		\end{equation}
		\begin{equation}\label{a2new} E_1[\rho_0]  - E_1[\rho_0^*] \geq - \varep;
		\end{equation}
		\begin{equation}\label{a3new}
			X_{02} := \int_{\mathbb{R}^2}\rho_0(x) x_2 dx > \varep^{-1}\lambda, \quad\text{and}\quad E_4[\rho_0] - \log(2X_{02}) \geq - \varep;
		\end{equation}
		\begin{equation}\label{a4new}
			E_2[\rho_0] + E_3[\rho_0] \geq- \varep.
		\end{equation}
		
		Then $\rho(\cdot,t) := \omega(\cdot,t) \mathbf{1}_Q$ satisfies
		\[
			\inf_{a\in\mathbb{R}^2} \|\rho(\cdot,t)-\rho_0^*(\cdot-a)\|_{L^1(\mathbb{R}^2)} \leq C_0 \sqrt{A\varep} \quad\text{ for all }t\geq 0,
		\]
		where $C_0$ is a universal constant.
		In addition, its vertical center of mass $X_2(t) := \int_{\mathbb{R}^2} \rho(x,t) x_2 dx$ satisfies
		\[
			\frac{X_2(t)}{X_{02}} \in (1-400 \varep, 1] \quad\text{ for all } t\geq 0.
		\]
		
	\end{corollary}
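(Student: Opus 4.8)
The plan is to deduce Corollary~\ref{cor:rescale} from Theorem~\ref{thm:to-infty} by exploiting the scaling symmetry of \eqref{eq:Euler}. Given the concentrated data, I would define the rescaled vorticity $\tilde\omega_0(x) := \lambda^2 \omega_0(\lambda x)$, which is again odd-odd, and set $\tilde\rho_0 := \tilde\omega_0 \mathbf{1}_Q$, so that $\tilde\rho_0(x) = \lambda^2 \rho_0(\lambda x)$. A single change of variables $u = \lambda x$ shows $\int_{\mathbb{R}^2}\tilde\rho_0\,dx = 1$, $\|\tilde\rho_0\|_{L^\infty} = \lambda^2\|\rho_0\|_{L^\infty} \le A$, and $|\supp\tilde\rho_0| = \lambda^{-2}|\supp\rho_0| \le \pi$; hence \eqref{a1new} for $\rho_0$ becomes exactly \eqref{a1} for $\tilde\rho_0$.

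The next step is to track how the four pieces of the energy and the first moment transform. Since the radially decreasing rearrangement commutes with the dilation, $(\tilde\rho_0)^* = \lambda^2\rho_0^*(\lambda\,\cdot)$. Changing variables $u = \lambda x$, $v = \lambda y$ in each double integral and using $\log|\lambda^{-1}(u-v)| = \log|u-v| - \log\lambda$ (and likewise for the reflected arguments $\tilde v$, $\bar v$ and the sum $u+v$, all of which commute with the dilation) yields $E_1[\tilde\rho_0] = E_1[\rho_0] + \log\lambda$, $E_2[\tilde\rho_0] = E_2[\rho_0] - \log\lambda$, $E_3[\tilde\rho_0] = E_3[\rho_0] + \log\lambda$, and $E_4[\tilde\rho_0] = E_4[\rho_0] - \log\lambda$, while the vertical first moment scales as $\tilde X_{02} = \lambda^{-1}X_{02}$. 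The crucial observation is that the three combinations entering the hypotheses are scale invariant: $E_1[\tilde\rho_0] - E_1[(\tilde\rho_0)^*]$, $E_2[\tilde\rho_0] + E_3[\tilde\rho_0]$, and $E_4[\tilde\rho_0] - \log(2\tilde X_{02})$ (using $\log(2\tilde X_{02}) = \log(2X_{02}) - \log\lambda$) all coincide with their unrescaled analogues, and $\tilde X_{02} = \lambda^{-1}X_{02} > \varep^{-1}$. Therefore \eqref{a2new}--\eqref{a4new} are precisely \eqref{a2}--\eqref{a4} for $\tilde\rho_0$.

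Having verified all hypotheses of Theorem~\ref{thm:to-infty} for $\tilde\rho_0$, I would invoke the scaling invariance of the flow: the Euler solution with initial data $\tilde\omega_0$ is $\tilde\omega(x,t) = \lambda^2\omega(\lambda x, \lambda^2 t)$, so $\tilde\rho(x,t) = \lambda^2\rho(\lambda x, \lambda^2 t)$. Theorem~\ref{thm:to-infty} then gives, for all $t\ge 0$, both $\inf_{a}\|\tilde\rho(\cdot,t) - (\tilde\rho_0)^*(\cdot - a)\|_{L^1} \le C_0\sqrt{A\varep}$ and $\tilde X_2(t)/\tilde X_{02} \in (1-400\varep, 1]$. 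Undoing the dilation through $u = \lambda x$ and the substitutions $b = \lambda a$, $s = \lambda^2 t$, one checks $\|\tilde\rho(\cdot,t) - (\tilde\rho_0)^*(\cdot - a)\|_{L^1} = \|\rho(\cdot,s) - \rho_0^*(\cdot - b)\|_{L^1}$ and $\tilde X_2(t)/\tilde X_{02} = X_2(s)/X_{02}$; since $b$ ranges over $\mathbb{R}^2$ and $s$ over $[0,\infty)$ as $a$ and $t$ do, taking infima and using that $s\ge0$ is arbitrary delivers the two stated conclusions.

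There is no genuine analytic difficulty here; the whole content is the scaling bookkeeping, and the one point to get right is the cancellation of the $\log\lambda$ terms. This cancellation is exactly why the hypotheses \eqref{a2new}--\eqref{a4new} are stated through the scale-invariant combinations above, and why the threshold in \eqref{a3new} is $\varep^{-1}\lambda$ rather than $\varep^{-1}$ --- the factor $\lambda$ is forced by $\tilde X_{02} = \lambda^{-1}X_{02}$.
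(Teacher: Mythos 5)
Your proposal is correct and follows essentially the same route as the paper's own proof: the identical rescaling $\tilde\rho_0 = \lambda^2\rho_0(\lambda\,\cdot)$, the same verification that the $\pm\log\lambda$ shifts in $E_1,\dots,E_4$ and in $\log(2X_{02})$ cancel so that \eqref{a2new}--\eqref{a4new} become \eqref{a2}--\eqref{a4}, followed by applying Theorem~\ref{thm:to-infty} and undoing the dilation (using that the $L^1$ norm and the ratio $X_2(t)/X_{02}$ are scale invariant). There is nothing to correct.
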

 
	\begin{proof}
		We recall the following scale invariance of the 2D Euler equations: if $\omg(x,t)$ is a solution of \eqref{eq:Euler}, then for any $\lambda>0$, \begin{equation*}
			\begin{split}
				\tld{\omg}(x,t) := \lambda^{2} \omg( \lambda x, \lambda^{2} t )
			\end{split}
		\end{equation*} is again a solution. 
		Let us check that when $\rho_0 = \omega_0 \mathbf{1}_Q$ satisfies \eqref{a1new}--\eqref{a4new}, $\tld\rho_0 = \tld{\omg}_0 \mathbf{1}_Q$ satisfies all assumptions \eqref{a1}--\eqref{a4} of Theorem~\ref{thm:to-infty}:
		\begin{itemize}
			\item One can easily check that such scaling leaves the $L^1$ norm invariant, so 
			$
			\int_{\mathbb{R}^2} \tld\rho_0 dx = \int_{\mathbb{R}^2} \rho_0 dx.
			$
			In addition, we have $\|\tld\rho_0\|_{L^\infty} = \lambda^{2} \|\rho_0\|_{L^\infty}$, and $|\supp \tld\rho_0| = \lambda^{-2} |\supp\rho_0|$. Hence if $\rho_0$ satisfies \eqref{a1new}, we have $\tld\rho_0$ satisfies \eqref{a1}.
			\item For the vertical center of mass, the scaling and \eqref{a3new} gives 
			\[
			\tld{X}_{02} := \int_{\mathbb{R}^2} \tld\rho_0(x) x_2 dx = \lambda^{-1} \int_{\mathbb{R}^2} \rho_0(x) x_2 dx = \lambda^{-1}X_{02} > \varep^{-1},
			\] 
			so $\tld\rho_0$ satisfies the first assumption of \eqref{a3}.
			\item For the energy, the scaling gives
			\[
			\tld E[\tld\rho_0] = \iint_{\mathbb{R}^2\times\mathbb{R}^2} \tld\rho_0(x)\tld\rho_0(y) K(x,y) dxdy = \iint_{\mathbb{R}^2\times\mathbb{R}^2} \rho_0(x)\rho_0(y) K(\lambda^{-1} x,\lambda^{-1} y) dxdy,
			\]
			and likewise we can replace $K$ by $K_i$ to get the scaling for each $E_i[\tld\rho_0]$.
			Namely, since $K_1,\dots, K_4$ contains different signs of $\log$, we have
			\[
			E_i[\tld\rho_0] = \begin{cases}
				E_i[\rho_0] + \log\lambda &\text{ for } i= 1,3;\\
				E_i[\rho_0] - \log\lambda &\text{ for } i= 2,4.
			\end{cases}
			\]
			Similarly, we also have $E_1[(\tld\rho_0)^*] = E_1[\rho_0^*] + \log\lambda$.
			
			Combining these identities with the assumptions \eqref{a2new}--\eqref{a4new} yields
			\[
			\begin{split}		&E_1[\tld\rho_0]-E_1[(\tld\rho_0)^*] = (E_1[\rho_0] + \log \lambda) -  (E_1[\rho_0^*] + \log \lambda)\geq - \varep;
				\\
				&E_4[\tld\rho_0] - \log(2\tld{X}_{02}) = (E_4[\rho_0]-\log\lambda) - \log(2\lambda^{-1}X_{02}) = E_4[\rho_0]-\log(2X_{02})\geq - \varep;
				\\[0.1cm]
				&E_2[\tld\rho_0] + E_3[\tld\rho_0] = (E_2[\rho_0] - \log\lambda) + (E_3[\rho_0] + \log\lambda) =E_2[\rho_0]  + E_3[\rho_0] \geq - \varep,
			\end{split}
			\]
			so  $\tld\rho_0$ satisfies \eqref{a2}--\eqref{a4}. 
		\end{itemize}
		Now we have checked that $\tld\rho_0$ satisfies all assumptions of Theorem~\ref{thm:to-infty}. Applying the theorem gives  
		\[
		\inf_{a\in\mathbb{R}^2} \|\tld\rho(\cdot,t) - (\tld\rho_0)^*(\cdot-a)\|_{L^1(\mathbb{R}^2)} \leq C_0 \sqrt{A\varep} \quad\text{ for all }t\geq 0,
		\]
		where $C_0$ is a universal constant, and
		\[	\frac{\tld X_2(t)}{\tld X_{02}} \in (1-400 \varep, 1] \quad\text{ for all } t\geq 0.
		\]
		Finally, we replace $\tld \rho(x,t)$ by $\lambda^2 \rho(\lambda x, \lambda^2 t)$ in the above inequalities (again, recall that the $L^1$ norm is preserved under the scaling) to get the desired estimates.
	\end{proof}
	
	Next we give a family of $\rho_0$ that satisfies the assumptions \eqref{a3new}--\eqref{a4new}. This result will be used later in the proof of our main theorem.
	
	\begin{proposition}
		\label{prop_assumption}
		For any $\varep\in(0,\frac{1}{1000})$, assume $p = (p_1,p_2)\in Q$ satisfies $p_1 > 8\varep^{-1}p_2$, and $\lambda>0$ is sufficiently small such that $\lambda < \frac{1}{4}p_2 \varep$. Then any $\rho_0 \in L_+^1(Q)$ with $\int_{Q} \rho_0 dx =1$ supported in $B(p,\lambda)$ satisfies \eqref{a3new}--\eqref{a4new}.
	\end{proposition}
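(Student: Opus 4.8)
The plan is to verify the three conditions in \eqref{a3new}--\eqref{a4new} one at a time, exploiting that $\rho_0$ is a probability density supported in the small ball $B(p,\lambda)$ with $p=(p_1,p_2)$ satisfying $p_1 \gg p_2 \gg \lambda$. Throughout I would use that every $x\in\supp\rho_0$ obeys $|x_i-p_i|\leq\lambda$, so in particular $p_2-\lambda\leq x_2\leq p_2+\lambda$ and $x_1\geq p_1-\lambda$, together with $\iint\rho_0(x)\rho_0(y)\,dxdy=1$. The strategy is to turn each integral inequality into a pointwise bound on the corresponding kernel that holds for all $x,y\in\supp\rho_0$, and then integrate against $\rho_0(x)\rho_0(y)$.

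First I would handle the moment bound $X_{02}>\varep^{-1}\lambda$. Since $x_2\geq p_2-\lambda$ on the support, integrating gives $X_{02}\geq p_2-\lambda$, and the hypothesis $\lambda<\frac14 p_2\varep$ (with $\varep<1$) makes $p_2-\lambda>\varep^{-1}\lambda$ after an elementary manipulation. For the second part of \eqref{a3new} I would lower-bound $E_4$ and upper-bound $\log(2X_{02})$ separately: for $x,y\in\supp\rho_0$ one has $|x-\bar y|=\sqrt{(x_1-y_1)^2+(x_2+y_2)^2}\geq x_2+y_2\geq 2(p_2-\lambda)$, hence $E_4[\rho_0]\geq\log(2(p_2-\lambda))$, while $X_{02}\leq p_2+\lambda$ gives $\log(2X_{02})\leq\log(2(p_2+\lambda))$. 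Combining these yields $E_4[\rho_0]-\log(2X_{02})\geq\log\frac{p_2-\lambda}{p_2+\lambda}$, and the elementary inequality $\log(1-s)\geq-\frac{s}{1-s}$ applied to $s=\frac{2\lambda}{p_2+\lambda}<\frac{\varep}{2}$ delivers the bound $\geq-\varep$.

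The slightly more delicate point is \eqref{a4new}, which is where the horizontal separation hypothesis $p_1>8\varep^{-1}p_2$ enters. Here I would combine the two kernels, using $|x-\tld y|^2=|x+y|^2-4x_2y_2$ to write
\[
K_2(x,y)+K_3(x,y)=\log\frac{|x-\tld y|}{|x+y|}=\frac12\log\left(1-\frac{4x_2y_2}{|x+y|^2}\right).
\]
On the support, the fraction is controlled by $\frac{4x_2y_2}{(x_1+y_1)^2}\leq\left(\frac{p_2+\lambda}{p_1-\lambda}\right)^2$, and the hypotheses $p_1>8\varep^{-1}p_2$ and $\lambda<\frac14 p_2\varep$ force $\frac{p_2+\lambda}{p_1-\lambda}$ to be at most a small constant multiple of $\varep$, so this quantity is $O(\varep^2)\ll\varep$. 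The pointwise bound $\frac12\log(1-u)\geq-\varep$ for such small $u$, followed by integration, then gives $E_2[\rho_0]+E_3[\rho_0]\geq-\varep$.

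I do not expect any genuine obstacle: all three estimates are routine once the support localization is used, and the only care needed is in tracking how the separation constants $\tfrac14$ and $8$ interact with the standing assumption $\varep<\frac{1}{1000}$ so that each final comparison lands on the correct side of $-\varep$. The algebraic identity $|x-\tld y|^2=|x+y|^2-4x_2y_2$ is the one structural observation worth isolating, since it is what makes the sign of $K_2+K_3$ and its smallness transparent.
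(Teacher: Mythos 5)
Your proposal is correct and follows essentially the same route as the paper: each of the conditions \eqref{a3new}--\eqref{a4new} is reduced to a pointwise bound on the relevant kernel valid for all $x,y\in\supp\rho_0$, using $X_{02}\in[p_2-\lambda,\,p_2+\lambda]$, $|x-\bar y|\ge 2(p_2-\lambda)$, and the horizontal separation, and then integrated against $\rho_0(x)\rho_0(y)$. The only cosmetic difference is in \eqref{a4new}, where you invoke the identity $|x-\tld y|^2=|x+y|^2-4x_2y_2$ to exhibit $K_2+K_3$ as $\tfrac12\log(1-u)$ with $u=O(\varep^2)$, whereas the paper bounds the ratio $|x-\tld y|/|x+y|$ directly from below by $1-\tfrac12\varep$; both comparisons land safely above $-\varep$.
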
 
	
	\begin{proof}
		First, note that since $\lambda < \frac{1}{4}p_2\varep$ and $\supp\rho_0 \subset B(p,\lambda)$, the vertical center of mass $X_{02}=\int \rho_0 x_2 dx$ satisfies
		\[
		X_{02} > p_2-\lambda > p_2\left(1-\frac{\varep}{4}\right) > \frac{p_2}{2} > \varep^{-1}\lambda,
		\]
		and we also have the upper bound $X_{02} < p_2\left(1+\frac{\varep}{4}\right)$. Also, for any $x,y\in\supp\rho_0 \subset B(p, \lambda)$, we have $|x-\bar y|\geq 2p_2 - 2\lambda \geq 2p_2(1 - \frac{1}{4}\varep)$, thus
		\[
		\begin{split}
			E_4[\rho_0] &= \iint_{\mathbb{R}^2\times\mathbb{R}^2} \rho_0(x)\rho_0(y)\log|x-\bar y| dxdy\\
			& \geq \log(2p_2) + \log \left(1 - \frac{1}{4}\varep\right)\\
			&\geq \log(2X_{02}) - \log\left(1 + \frac{1}{4}\varep\right) + \log \left(1 - \frac{1}{4}\varep\right)\\
			& \geq \log(2X_{02}) - \varep, 
		\end{split}
		\]
		where we used that $\varep < \frac{1}{1000}$ in the last step. So we have verified that $\rho_0$ satisfies \eqref{a3new}.
		
		To verify \eqref{a4new}, recall that
		\[
		E_2[\rho_0]+E_3[\rho_0] = \iint_{\mathbb{R}^2\times\mathbb{R}^2} \rho(x)\rho(y) \log\frac{|x-\tld y|}{|x+y|} dxdy \geq \inf_{x,y\in\supp\rho_0} \log\frac{|x-\tld y|}{|x+y|} .
		\]
		For any $x,y\in \supp\rho_0$, using that $\lambda<p_2$ and $\frac{p_2}{p_1}<\frac{\varep}{8}$, we have
		\[
		\frac{|x-\tld y|}{|x+y|} \geq \frac{2(p_1-\lambda)}{2(\sqrt{p_1^2+p_2^2} + \lambda)} \geq \frac{p_1-p_2}{\sqrt{p_1^2+p_2^2} + p_2} \geq \frac{1-\frac{\varep}{8}}{\sqrt{1+(\frac{\varep}{8})^2} + \frac{\varep}{8}} \geq 1-\frac{1}{2}\varep.
		\] 
		This leads to 
		\[
		\inf_{x,y\in\supp\rho_0} \log\frac{|x-\tld y|}{|x+y|} \geq \log\left(1-\frac{1}{2}\varep\right) \geq -\varep,
		\]
		finishing the proof.
	\end{proof}

	\subsection{Orbital stability starting near an arbitrary point}
	\label{subsec_final}
	
	In this subsection, we will consider $\rho_0 = \omega_0 \mathbf{1}_Q$ to be concentrated near an arbitrary point $p \in Q$. To begin with, we recall results on the justification of the point vortex motion, often called as a desingularization problem. Even local in time justification is a highly non-trivial problem, which was first done by Marchioro--Pulvirenti \cite{MaP83} with extensions in \cite{MaP93}. A nice exposition is given in Marchioro--Pulvirenti \cite[Chap. 4]{MaPu}.

	For our application, we can focus on the initial setting of four point vortices in $\mathbb{R}^2$ that are odd-odd, where the initial vortex in $Q$ is located at point $p$ and has strength 1. As the point vortices evolve in time according to the 2D Euler equations, we denote the trajectory of the point vortex in $Q$ at time $t$ by $Z_p(t)$. 
	
	The following theorem\footnote{The statement in \cite[Theorem 2.1]{MaP93} is in fact stronger than what we state here: for a given finite time interval $[0,T]$,  their desingularization result can be done near any point vortex solution (not just in the odd-odd setting as we stated). Also, their $L^\infty$ norm assumption is slightly less restrictive than what we state (they only need $\|\omega_0\|_{L^\infty} \leq A\delta^{-\eta}$ for some $\eta<8/3$, and we fix $\eta=2$ for our application). We only state the version that we need for our application.}
	by Marchioro and Pulvirenti says that for any given \emph{finite time interval} $[0,T]$ and any radius $\lambda$, $\rho(\cdot, t)=\omega(\cdot, t)\mathbf{1}_Q$ will stay concentrated near $Z_p(t)$ if $\rho_0$ is sufficiently concentrated near $p$: 
	
	\begin{proposition}[{{\cite[Theorem 2.1]{MaP93}}}] \label{prop:MP}
		Take any point $p\in Q$,  and $A, \lambda, T>0$. Then  there exists a $\dlt_{0} = \delta_0(p, A, \lambda, T) \in (0,\lambda)$, such that the following holds:
		for all $0<\dlt \le \dlt_0$, if the initial data $\omega_0$ to \eqref{eq:Euler} is odd-odd and $\rho_0 := \omega_0|_Q$ satisfies \begin{equation*}
			\begin{split}
				\int_{\mathbb{R}^2} \rho dx = 1, \quad \supp \rho_0 \subset B(p, \delta), \quad \text{ and }\|\rho_0\|_{L^\infty(\mathbb{R}^2)} \leq A\delta^{-2},
			\end{split}
		\end{equation*} then $\rho(\cdot,t):=\omega(\cdot,t)\mathbf{1}_Q$ satisfies \begin{equation*}
			\begin{split}
				\mathrm{supp}\, \rho(\cdot,t) \subset B(Z_p(t), \lambda) \quad \mbox{for all}\quad t \in [0,T].
			\end{split}
		\end{equation*}

	\end{proposition}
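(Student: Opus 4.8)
The statement is the desingularization theorem of Marchioro--Pulvirenti, and the plan is to reproduce their moment-of-inertia argument, adapted to the reflected four-vortex configuration forced by odd-odd symmetry. I would first record the one piece of geometry that is needed: the arc $\{Z_p(t):t\in[0,T]\}$ is a compact subset of the \emph{open} quadrant, so there is $d_0=d_0(p,T)>0$ with $\dist(Z_p(t),\rd Q)\ge 2d_0$ for all $t\in[0,T]$. As proving the claim for a smaller $\lambda$ is a stronger statement, I may assume $\lambda\le d_0$. Using odd-odd symmetry I would then write the velocity transporting $\rho$ as an integral over $Q$ alone: with the Biot--Savart kernel $\nb^\perp N(z)=\frac1{2\pi}\frac{z^\perp}{|z|^2}$, one has for $x\in Q$
\begin{equation*}
u(x,t)=\underbrace{\int_Q \nb^\perp N(x-y)\,\rho(y,t)\,dy}_{=:v_s(x,t)}+\underbrace{\int_Q\big[{-}\nb^\perp N(x-\tld y)+\nb^\perp N(x+y)-\nb^\perp N(x-\bar y)\big]\rho(y,t)\,dy}_{=:v_{\mathrm{im}}(x,t)}.
\end{equation*}
The three image kernels have arguments $x-\tld y,\ x+y,\ x-\bar y$, each possessing a coordinate of size $\ge 2d_0$ whenever $x,y$ lie within $\lambda\le d_0$ of the trajectory; hence on that region $v_{\mathrm{im}}$ is smooth with Lipschitz constant $L=L(d_0)$ and agrees to leading order with the point-vortex field $F$ for which $\dot Z_p=F(Z_p)$.

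The core consists of two differential inequalities, for the center of vorticity $X(t):=\int_Q x\,\rho(x,t)\,dx$ and the moment of inertia $I(t):=\int_Q|x-X(t)|^2\rho(x,t)\,dx$ (recall $\int_Q\rho=1$), both exploiting two identities for the self-kernel: antisymmetry $\nb^\perp N(-z)=-\nb^\perp N(z)$ and orthogonality $z\cdot\nb^\perp N(z)=0$. Antisymmetry gives $\iint_{Q\times Q}\nb^\perp N(x-y)\rho(x)\rho(y)\,dx\,dy=0$, so the self-interaction leaves $X$ unmoved; thus $\dot X=\int_Q v_{\mathrm{im}}\rho$, and comparing with $\dot Z_p=F(Z_p)$ through the Lipschitz bound yields $\frac{d}{dt}|X-Z_p|\le L\big(|X-Z_p|+I^{1/2}\big)$, so $|X(t)-Z_p(t)|=O(\delta)$ on $[0,T]$ by Gronwall. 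Orthogonality produces the decisive cancellation
\begin{equation*}
\int_Q(x-X)\cdot v_s(x)\,\rho(x)\,dx=\tfrac12\iint_{Q\times Q}(x-y)\cdot\nb^\perp N(x-y)\,\rho(x)\rho(y)\,dx\,dy=0,
\end{equation*}
so that the self-velocity---though as large as $\nrm{v_s}_{L^\infty}\aleq\sqrt A\,\delta^{-1}$---contributes nothing to $\dot I$. Only the Lipschitz field $v_{\mathrm{im}}$ remains, giving $\dot I\le L'\,I$ and therefore $I(t)\le I(0)e^{L'T}\le C_T\delta^2$, since $\supp\rho_0\subset B(p,\delta)$ forces $I(0)\le\delta^2$. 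By Chebyshev this already yields \emph{mass} concentration: for any $r>0$ the $\rho$-mass outside $B(X(t),r)$ is at most $C_T\delta^2/r^2\to0$ as $\delta\to0$.

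The main obstacle is to upgrade this $L^1$ concentration of mass to the asserted control of the \emph{entire} support, since the $O(\delta^{-1})$ self-velocity rules out any direct bound on individual Lagrangian trajectories. The resolution uses that this large velocity is purely rotational and that bounded vorticity spreads slowly. Concretely, I would pass to the parabolically rescaled blob $\tld\omega(\xi,s):=\delta^2\omega(X(t)+\delta\xi,\,t)$ with $s:=t/\delta^2$, which has unit mass, $L^\infty$-norm $\le A$, and $O(1)$ support, and which evolves under its own order-one velocity together with a linear-in-position external field inherited from $v_{\mathrm{im}}$ whose cumulative effect over the rescaled horizon is only a bounded, $\delta$-independent distortion. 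On $[0,T]$ the rescaled time runs over $[0,T/\delta^2]$, so the question reduces to the growth of the support of an order-one bounded vortex over a long time; Marchioro's sublinear support-growth estimate bounds this by $\aleq (T/\delta^2)^{1/3}$ in rescaled variables, i.e.\ $\aleq T^{1/3}\delta^{1/3}$ in the original variables, which vanishes as $\delta\to0$. Choosing $\delta_0=\delta_0(p,A,\lambda,T)$ small enough that both this spread and $|X(t)-Z_p(t)|=O(\delta)$ stay below $\lambda/2$ then forces $\supp\rho(\cdot,t)\subset B(X(t),\lambda/2)\subset B(Z_p(t),\lambda)$ for all $t\in[0,T]$. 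The recurring delicate point is that every constant---in the bounds for $\dot X$, for $\dot I$, and for the support growth---must be independent of the singular scale $\delta$, which is precisely what the two kernel identities and the parabolic rescaling are arranged to secure.
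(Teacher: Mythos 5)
The paper offers no proof of Proposition~\ref{prop:MP}: it is imported verbatim (in a weakened form, fixing $\eta=2$) from Marchioro--Pulvirenti \cite[Theorem 2.1]{MaP93}, as the attribution and the accompanying footnote make clear. Your attempt therefore has to be judged against Marchioro--Pulvirenti's own argument, not against anything written in this paper. Its first two stages are correct and coincide with the opening of that argument: antisymmetry of $\nb^\perp N$ makes the self-interaction leave the center of vorticity unmoved, the orthogonality $z\cdot\nb^\perp N(z)=0$ cancels the $O(\delta^{-1})$ self-velocity in $\dot I$, and Gronwall against the Lipschitz image field gives $|X(t)-Z_p(t)|=O(\delta)$ and $I(t)\le C_T\delta^2$. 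Two caveats even here: the Lipschitz bounds on the image kernels presuppose that $\supp\rho(\cdot,t)$ has not yet left the $\lambda$-neighborhood of the trajectory, so the whole scheme must be wrapped in a continuation (bootstrap) argument, which you never set up; and both Chebyshev and the growth estimate you later invoke require $\rho\ge 0$ in $Q$, an assumption that is ambient in Section~\ref{sec:radial} and in \cite{MaP93} but which your proof uses silently.

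The genuine gap is the step you yourself flag as the main obstacle: passing from $L^1$-concentration to confinement of the \emph{whole support}. You delegate it to ``Marchioro's sublinear support-growth estimate'' applied to the parabolically rescaled blob. That estimate is proved for a single-signed vortex evolving under the \emph{free} Euler flow in the whole plane; your rescaled blob is additionally driven by the image field, over the rescaled horizon $T/\delta^2$, which diverges as $\delta\to 0$. There is no superposition principle for supports that lets you compose ``the external field causes only a bounded distortion'' with ``free growth is $\lesssim s^{1/3}$'': both effects act on the same escaping filament, and the $s^{1/3}$ bound is not stable under adding a time-dependent strain unless its proof is rerun in that setting. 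More to the point, the reason the $s^{1/3}$ theorem is nontrivial is that the obvious edge-velocity estimate is too weak: by Chebyshev the mass outside radius $R$ is only $\lesssim I/R^2$, and its contribution to the velocity at the edge of the support is $\lesssim \|\tld\omega\|_{L^\infty}^{1/2}\,(I/R^2)^{1/2}\sim \sqrt{A}\,I^{1/2}/R$, which integrates to $R(s)\lesssim \sqrt{s}$, i.e.\ a physical spread of order $\sqrt{T}$ --- useless for the conclusion. Beating this requires showing that the escaped mass is in fact smaller than any power of $\delta$ (not merely $O(\delta^2/r^2)$), through an iteration in which improved mass bounds improve the edge-velocity bound and conversely, carried out in the presence of the image field; this iteration is precisely the technical core of \cite{MaP93} (and the origin of the threshold $\eta<8/3$ mentioned in the paper's footnote). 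Your sketch contains none of it, so the one step that constitutes the real content of the theorem rests on a citation that does not cover the situation your rescaling creates.
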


	We are now ready to complete the proof of Theorem \ref{thm:main1}. \begin{proof}[\textbf{\emph{Proof of Theorem \ref{thm:main1}}}]
		The proof is divided into the following steps: Steps 1--4 are mainly devoted to fixing various parameters. The main proof for orbital stability for positive times is done in Steps 5--6. In the final Step 7, we prove orbital stability for negative times.
		
		\medskip
		\noindent\textbf{Step 1.} Fixing a small $\tilde\varep$ depending on $\varep, A$ and $p$. 
		
		For any $\varep, A$ and $p=:(p_1,p_2)$ given in Theorem~\ref{thm:main1}, let us define a sufficiently small $\tilde\varep = \tilde\varep(\varep, A, p)>0$, given by
		\begin{equation}\label{def_tilde_eps}
			\tilde\varep := \min\left\{C_0^{-2} A^{-1}\varepsilon^2, ~\frac{1}{4}c(K_1, 1)A^{-1} \varepsilon^2, ~  \frac{\sqrt{p_1^2+p_2^2}}{2400 p_1p_2} \varep, ~\frac{1}{1000} \right\}.
		\end{equation}
		Here $C_0$ and $c(K_1,1)$ (with $K_1=-\log(\cdot)$) are the universal constants given in Corollary~\ref{cor:rescale} and  Theorem~\ref{thm_yy} respectively. We will work with such $\tilde\varep$ throughout the proof; the motivation for its definition will become clear in Steps 5--6.
		 
		\medskip
		\noindent\textbf{Step 2.} Fixing  a large time $T>0$ and a point $q\in \mathcal{O}_{pv}(p)$.

		For $p \in Q$ given by Theorem~\ref{thm:main1}, we consider the point vortex dynamics $Z_p(t)$ defined in the second paragraph of Section~\ref{subsec_final}. For simplicity, we omit the $p$ dependence and call it $Z(t) := (Z_1(t),Z_2(t))$. Recall that $Z_1(t)$ monotone increases in time, and goes to $+\infty$ as $t\to\infty$; whereas $Z_2(t)$ monotone decreases in time, and goes to 
		\begin{equation}\label{def_x2infty}
			Z_2(\infty) := \frac{p_1 p_2}{\sqrt{p_1^2+p_2^2}}\quad  \text{ as }t\to\infty.
		\end{equation}
		Therefore there exists a sufficiently large $T>0$, such that $q = (q_1, q_2) := Z(T)$ satisfies 
		\begin{equation}
			\label{a_p}q_1 > 8\tilde\varep^{-1} q_2, \quad q_2 \leq \min\left\{ 2Z_2(\infty), ~Z_2(\infty) + \frac{\varep}{4}\right\}
		\end{equation} with $\tilde\varep$ given in Step 1. Note that such $q$ belongs to $\mathcal{O}_{pv}(p)$, and we also have $q_1>p_1$ for free since $Z_1$ is monotone increasing in time.

		\medskip
		\noindent\textbf{Step 3.} Fixing a small radius $\lambda>0$ around $q$.
		
		Let us pick a sufficiently small $\lambda>0$ such that it satisfies\footnote{The reason for the second and third argument in the min function will only be clear in Step 6; we will not use it for a while.}
		\begin{equation}
			\label{def_lambda} \lambda < \min\left\{\frac{1}{4}q_2 \tilde\varep, ~\frac{1}{4}\varep, ~ Z_2(\infty)\right\},
		\end{equation}
		where $\tilde\varep$ and $q_2$ are fixed in Step 1 and 2 respectively, and $Z_2(\infty)$ is defined in \eqref{def_x2infty}.
		Note that now $\tilde \varep, q$ and $\lambda$ satisfy the assumptions of Proposition~\ref{prop_assumption}, which we will apply later.
		
		Next, we will further reduce $\lambda$, such that it satisfies
		\begin{equation}\label{K234_diff}
			\sup_{t\in[0,T]}\sup_{x,y\in B(Z(t), \lambda)} |K_{234}(x,y) - K_{234}(Z(t),Z(t))| < \frac{1}{4}\tilde\varep, 
		\end{equation}
		where $K_{234}$ is defined as in \eqref{K234_def}. This is doable since $K_{234}$ is continuous in $Q\times Q$, and $\{(Z(t),Z(t)):t\in[0,T]\}$ is a compact set in $Q\times Q$.

		\medskip
		\noindent\textbf{Step 4.} Fixing a small radius $\delta_0$ around $p$.

		Let us apply Proposition~\ref{prop:MP} to the point $p$ and constant $A$ in Theorem~\ref{thm:main1}, the radius $\lambda$ in Step~3, and the time $T$ in Step 2. It gives a $\delta_0 \in (0,\lambda)$ such that satisfies the following property: for any $\delta \in (0,\delta_0)$ and $\rho_0$ with mass 1 that satisfies
		$
		\supp\rho_0 \subset B(p,\delta)$ and $ \|\rho_0\|_{L^\infty} \leq A \delta^{-2}$
		(note that these form a subset of our assumption \eqref{a_main}),
		we have 
		\begin{equation}\label{supp_T}
			\supp \rho(\cdot,t) \subset B(Z(t),\lambda) \quad\text{ for all }t\in[0,T].
		\end{equation}
		
		In addition, we claim that we can further reduce $\delta_0$, such that for any $\delta\in(0,\delta_0)$, the assumption \eqref{a_main} implies that 
		\begin{equation}\label{e1_diff}
			|E_1[\rho_0]-E_1[\rho_0^*]|\leq \frac{\tilde\varep}{2}.
		\end{equation}
		To see why this holds, let us bound $|E_1[\rho_0]-E_1[\rho_0^*]|$ as follows under the assumption \eqref{a_main}:
		\[
		\begin{split}
			|E_1[\rho_0]-E_1[\rho_0^*]| &= |E_1[\rho_0]-E_1[\rho_0^*(\cdot-p)]| \\
			&= \left|\iint_{\mathbb{R}^2\times\mathbb{R}^2} (\rho_0(x) - \rho_0^*(x-p))  (\rho_0(y) + \rho_0^*(y-p)) K_1(x-y) dxdy\right| \\
			&\leq \left|\iint_{\mathbb{R}^2\times\mathbb{R}^2} (\rho_0(x) - \rho_0^*(x-p)) \mathbf{1}_{x,y\in B(p,\delta)} 2A\delta^{-2} K_1(x-y) dxdy\right|\\
			&\leq \left|\iint_{\mathbb{R}^2\times\mathbb{R}^2} (\rho_0(x) - \rho_0^*(x-p))   \mathbf{1}_{z\in B(0,2\delta)} 2A\delta^{-2} K_1(z) dx dz\right| \quad(z:=x-y)\\
			&\leq 2A\delta^{-2} \|\rho_0 - \rho_0^*(\cdot-p)\|_{L^1} \left|\int_{B(0,2\delta)} \log |z| dz\right|\\
			&\leq C\delta |\log \delta|,
		\end{split}
		\]
		where the first inequality follows from the assumption $\|\rho_0\|_{L^\infty}\leq A\delta^{-2}$ (thus $|\rho_0+\rho_0^*(\cdot-p)|\leq 2A\delta^{-2}$) and the assumption $\supp\rho_0\subset B(p,\delta)$ 
		(which leads to $\supp\rho_0^* \subset B(0,\delta)$, thus $\supp\rho_0^*(\cdot-p) \subset B(p,\delta)$). The final inequality follows from the assumption $\|\rho_0 - \rho_0^*(\cdot-p)\|_{L^1}$ in \eqref{a_main}. Since the function $s|\log s|$ is increasing in $(0,e^{-1})$, by further reducing $\delta_0$ such that $\delta_0 \in (0,e^{-1})$ and $\delta_0|\log\delta_0|\leq \tilde\varep/2$, we have $\delta|\log\delta| \leq \delta_0|\log\delta_0| \leq \tilde\varep/2$ for all $\delta \in (0,\delta_0)$. This finishes the proof of the claim \eqref{e1_diff}.
		
		\medskip
		\noindent\textbf{Step 5.} Applying Corollary~\ref{cor:rescale} to $\rho(\cdot,T)$.

		In this step, assuming  $\rho_0$ satisfies \eqref{a_main} for some $\delta \in (0,\delta_0)$, we aim to apply  Corollary~\ref{cor:rescale} to $\rho(\cdot,T)$, with parameters $\tilde\varep, A$ and $\delta$. Let us check that all assumptions of Corollary~\ref{cor:rescale} are satisfied:
		
		\begin{itemize}
			\item Verifying \eqref{a1new}: Since $\rho_0$ satisfies \eqref{a_main}, and $\rho(\cdot,T)$ has the same mass, $L^\infty$-norm and support size as $\rho_0$, it is straightforward to verify that \eqref{a1new} holds (with $\lambda$ replaced by $\delta$), i.e.
			\[
			\int_{\mathbb{R}^2} \rho(x,T) dx = 1, \quad 0\leq \rho(\cdot,T) \leq A \delta^{-2} \quad  \text{ and }\quad |\supp \rho(\cdot,T)| \leq \pi \delta^2.
			\]
			
			\medskip
			\item Verifying \eqref{a3new}--\eqref{a4new}:
			
			Recall that in Step 3,  $q$ and $\lambda$ are chosen such that they satisfy the assumptions of Proposition~\ref{prop_assumption} with parameter $\tilde\varep$. In Step 4, we showed that when $\rho_0$ satisfies \eqref{a_main}, $\supp\rho(\cdot,T)\subset B(q,\lambda)$ (which follows from \eqref{supp_T}). We can then apply Proposition~\ref{prop_assumption} to conclude that $\rho(\cdot,T)$ satisfies \eqref{a3new}--\eqref{a4new} (with parameter $\varep$ replaced by $\tilde\varep$). More precisely, we have
			\begin{equation}\label{a3new2}
				X_2(T) := \int_{\mathbb{R}^2}\rho(x,T) x_2 dx > \tilde\varep^{-1}\lambda > \tilde\varep^{-1}\delta \quad\text{and}\quad E_4[\rho(T)] - \log(2X_2(T)) \geq - \tilde\varep;
			\end{equation}
			\[
			E_2[\rho(T)] + E_3[\rho(T)] \geq- \tilde\varep.
			\]
			Note that in the last step of the first inequality of \eqref{a3new2}, we also used the fact that $\lambda \geq \delta_0 > \delta$ (where $\lambda > \delta_0$ is due to Step 4, and $\delta\in(0,\delta_0)$ comes from our assumption in Theorem~\ref{thm:main1}).
			
			\medskip
			\item Verifying \eqref{a2new}:
			
			Finally, we aim to show that for any $t\in [0,T]$, $\rho(\cdot,t)$ satisfies\footnote{To show \eqref{a2new}, we only need to take $t=T$. But the estimate for $t\in(0,T)$ will be useful in the next step.}
			\begin{equation}\label{e1_goal}
				E_1[\rho(t)] - \underbrace{E_1[\rho(t)^*]}_{=E_1[\rho_0^*]} \geq -\tilde\varep.
			\end{equation}
			Note that we have $\rho(\cdot,t)^*=\rho_0^*$, since $\rho(\cdot,t)$ has the same distribution function for all time (as it is transported by a divergence free velocity field).
			Recall that we already have \eqref{e1_diff} under the assumption \eqref{a_main} and our choice of $\delta_0$. Thus to show \eqref{e1_goal}, it suffices to show that
			\[
			|E_1[\rho_0] - E_1[\rho(t)]| \leq \frac{\tilde\varep}{2} \quad\text{ for all }t\in[0,T].
			\]
			Since $\tld E[\rho(t)]=\tld E[\rho_0]$ is conserved, this desired inequality is equivalent to
			\begin{equation}\label{goal_234}
				|E_{234}[\rho_0] - E_{234}[\rho(t)]| \leq \frac{\tilde\varep}{2} \quad\text{ for all }t\in[0,T].
			\end{equation}
			To show this, recall that for any $t\in [0,T]$, since $\rho(\cdot,t)$ have mass 1 and $\supp \rho(\cdot,t) \subset B(Z(t),\lambda)$ (due to \eqref{supp_T}), we have
			\[
			\begin{split}
				|E_{234}[\rho(t)] - K_{234}(Z(t), Z(t))| &= \left| \iint_{\mathbb{R}^2\times\mathbb{R}^2} (K_{234}(x,y)-K_{234}(Z(t), Z(t))) \rho(x,t)\rho(y,t)dxdy\right|\\
				&\leq \sup_{x,y\in \supp \rho(\cdot,t)} \left|K_{234}(x,y)-K_{234}(Z(t), Z(t))\right|\\ 
				&\leq \frac{\tilde\varep}{4},
			\end{split}
			\]
			where the last step follows from \eqref{K234_diff} and $\supp \rho(\cdot,t) \subset B(Z(t),\lambda)$. Applying this inequality at time $0$ and $t$ respectively and combining it with the fact that $K_{234}(Z(0),Z(0)) = K_{234}(Z(t),Z(t))$ (which follows from \eqref{K234_eq}) gives us \eqref{goal_234}, which yields \eqref{e1_goal}.
			
		\end{itemize}
		
		Finally, we are ready to apply Corollary~\ref{cor:rescale} to $\rho(\cdot,T)$ with parameters $\tilde\varep, A$ and $\delta$. It leads to
		\begin{equation}\label{conclusion11}
			\inf_{a\in\mathbb{R}^2} \|\rho(\cdot,t) - \rho_0^*(\cdot-a)\|_{L^1(\mathbb{R}^2)} \leq C_0 \sqrt{A\tilde\varep} \leq \varep \quad\text{ for all } t\geq T,
		\end{equation}
		and
		\begin{equation}\label{conclusion22}
			\frac{X_2(t)}{X_2(T)} \in (1- 400\tilde\varep, 1] \subset \left(1-  \frac{\sqrt{p_1^2+p_2^2}}{6p_1p_2}\varep, 1\right] \quad\text{ for all } t\geq T.
		\end{equation}
		where the last step of the two inequalities follows from our choice of $\tilde\varep$ in Step 1. Note that \eqref{conclusion11} implies \eqref{thm_goal1} for all $t\geq T$.
		
		\medskip
		\item\textbf{Step 6. Orbital stability for all $t\geq 0$.}
		
		In this step, we aim to prove \eqref{thm_goal1} for $0\leq t\leq T$ (recall that the $t\geq T$ case is already done in Step 5 due to \eqref{conclusion11}), and \eqref{thm_goal2} for $t\geq 0$.

		Defining $\mu(t) := \delta^2 \rho(\delta x, t)$, it is easy to check that
		\[
		E_1[\mu(t)] = E_1[\rho(t)] + \log \delta,
		\]
		and combining this with \eqref{e1_goal} gives
		\[
		E_1[\mu(0)^*] - E_1[\mu(t)] \leq \tilde\varep \quad\text{ for all }t\in [0,T].
		\]
		Note that for any $t\in[0,T]$, $\mu(\cdot,t)$ has mass 1, and satisfies $0\leq \mu(\cdot,t)\leq A$ and $\supp\mu(\cdot,0)^* \subset B(0,1)$, where the two inequalities follows from \eqref{a_main} and the scaling property. We can then apply Theorem~\ref{thm_yy} to obtain
		\[
		\tilde\varep \geq E_1[\mu(0)^*]- E_1[\mu(t)]  \geq \frac{1}{4} c(K_1,1) A^{-1} \inf_{a\in\mathbb{R}^2}\|\mu(\cdot,t)-\mu^*(\cdot-a, 0)\|_{L^1}^2.
		\]
		Since $\mu$ and $\rho$ have the same $L^1$ norm, this leads to
		\[
		\inf_{a\in\mathbb{R}^2}\|\rho(\cdot,t)-\rho_0^*(\cdot-a)\|_{L^1} = \inf_{a\in\mathbb{R}^2}\|\mu(\cdot,t)-\mu^*(\cdot-a, 0)\|_{L^1} \leq \sqrt{4c(K_1,1)^{-1} A \tilde \varep} \leq \varep,
		\]
		where the last step follows from our choice of $\tilde\varep$ in \eqref{def_tilde_eps}. This finishes the proof for \eqref{thm_goal1} for $t\geq 0$.
		
		\medskip
		It remains to show \eqref{thm_goal2} for all $t\geq 0$. Again, we split the proof into the cases $t\in[0,T]$ and $t>T$. The case $t\in[0,T]$ is easy: in Step 3, we had chosen $\lambda$ such that $\lambda < \frac{1}{4}\varep$. By \eqref{supp_T}, we have
		\begin{equation}\label{temp_diffx}
			|X(t)-Z(t)|\leq \lambda < \frac{\varep}{4}\quad \text { for all }t\in [0,T],
		\end{equation}
		and since $Z(t)\in \mathcal{O}_{pv}(p)$, clearly this leads to \eqref{thm_goal2} for all $t\in[0,T]$.
		
		\medskip
		We now move on to the case $t\geq T$. In this case, we first claim that 
		\begin{equation}\label{temp_diff2}
			0\leq X_2(T)-X_2(t) \leq \frac{\varep}{2} \quad \text{ for all }t\geq T.
		\end{equation}
		Here the lower bound directly follows from Lemma~\ref{lemma_center}. To show the upper bound, we have
		\[
		X_2(T)-X_2(t) \leq \frac{\varep}{6}\frac{X_2(T)}{Z_2(\infty)} \leq \frac{\varep}{6}\left(\frac{Z_2(T)}{Z_2(\infty)} + \frac{X_2(T)-Z_2(T)}{Z_2(\infty)}\right) \leq \frac{\varep}{6}\left(2+\frac{\lambda}{Z_2(\infty)}\right)\leq \frac{\varep}{2},
		\]
		where we applied \eqref{conclusion22} and \eqref{def_x2infty} in the first inequality,  \eqref{a_p} and \eqref{temp_diffx} (for time $T$) in the third inequality, and \eqref{def_lambda} in the last inequality. This proves the claim \eqref{temp_diff2}.
		
		For $t\geq T$, note that the monotonicity result in Lemma~\ref{lemma_center} together with \eqref{temp_diffx} (for time $T$) yield 
		$
		X_1(t)\geq X_1(T) \geq Z_1(T) - \frac{\varep}{4}.
		$ Below we discuss two subcases:
		\begin{itemize}
			\item 
			Case 1. If $t\geq T$ is such that $X_1(t) \in (Z_1(T) - \frac{\varep}{4}, Z_1(T))$, then we directly have
			\[
			\begin{split}
				|X(t)-Z(T)| &\leq |X_1(t)-Z_1(T)| + |X_2(t)- X_2(T)| + | X_2(T)-Z_2(T)| \\
				&\leq \frac{\varep}{4}+  \frac{\varep}{2} +  \frac{\varep}{4} = \varep,
			\end{split}
			\]
			where the second inequality follows from the assumption for Case 1, \eqref{temp_diff2}, and \eqref{temp_diffx} (for time $T$). This directly implies \eqref{thm_goal2} since $Z(T)\in \mathcal{O}_{pv}(p)$.
			
			\item 
			Case 2. Otherwise, we have $X_1(t) \geq Z_1(T)$. Since $Z_1$ is increasing in time and it goes to infinity as $t\to\infty$, there exists some $\tilde T \geq T$ (where $\tilde T$ depends on $t$), such that 
			\[
			X_1(t) = Z_1(\tilde T).
			\]
			We then have
			\[
			\begin{split}
				|X(t) - Z(\tilde T)| &= |X_2(t) - Z_2(\tilde T)|\\
				&\leq |X_2(t)-X_2(T)| + | X_2(T) - Z_2( T)| + |Z_2(T) - Z_2(\tilde T)|\\
				&\leq \frac{\varep}{2} + \frac{\varep}{4} + \frac{\varep}{4} \leq \varep
			\end{split}
			\]
			where in the second inequality we used \eqref{temp_diff2}, \eqref{temp_diffx} (for time $T$), and \eqref{a_p} (also note that $Z_2(T) =q_2 \leq Z(\infty)+\frac{\varep}{4}$ implies $|Z_2(T) - Z_2(\tilde T)| \leq \frac{\varep}{4}$, since $Z_2(\infty) < Z_2(\tilde T) \leq Z_2(T)$). Again, the above inequality directly implies \eqref{thm_goal2} since $Z(\tilde T)\in \mathcal{O}_{pv}(p)$.
		\end{itemize}
		Combining the above cases, we have obtained \eqref{thm_goal2} for all $t\geq 0$. 
		
		\medskip
		\item\textbf{Step 7. Orbital stability for all $t\leq 0$.}
		
		Finally, it remains to show \eqref{thm_goal1} and \eqref{thm_goal2} for $t\leq 0$, which is a simple consequence of the time-reversibility of the Euler equation. Namely, introducing $\eta_0(x_1,x_2) = \rho_0(x_2,x_1)$ and consider the solution with odd-odd initial data whose restriction in $Q$ being $\eta_0$ and $\rho_0$ respectively, the two solutions satisfy
		\[
		\eta(x_1,x_2,t) = \rho(x_2,x_1,-t) \quad\text{ for all }x_1,x_2\in Q, t\in\mathbb{R}.
		\]
		Therefore, to obtain orbital stability for $\rho(\cdot,t)$ for time $t\leq 0$, it suffices to obtain orbital stability for $\eta(\cdot,t)$ for $t\geq 0$. 
		Also, note that if $\rho_0$ satisfies \eqref{a_main}, then $\eta_0$ also satisfies \eqref{a_main} with the point $p=(p_1,p_2)$ replaced by $q=(p_2,p_1)$. 
		
		In Steps 1--6, we have already proved that Theorem~\ref{thm:main1} holds for $t\geq 0$. Therefore, when the point $p$ is replaced by $q$, there exists a new $\tilde\delta_0 = \tilde\delta_0(\varep, A, q)$,  such that for all $\delta \in (0,\tilde\delta_0)$, if $\eta_0$ satisfies \eqref{a_main} (with point $p$ replaced by $q$), then \eqref{thm_goal1} and \eqref{thm_goal2} holds for $\eta(\cdot,t)$ for all $t\geq 0$.  This implies that \eqref{thm_goal1} and \eqref{thm_goal2} holds for $\rho(\cdot,t)$ for all $t\leq 0$, where we also use the fact that the set $\mathcal{O}_{pv}(p)$ is symmetric about $x_1=x_2$.
		
		Finally, by setting $\delta_0$ as the minimum of $\delta_0$ in Step 4 and $\tilde\delta_0$, we conclude the proof of Theorem~\ref{thm:main1}.
	\end{proof}

	\bibliographystyle{plain}

\end{document}